\author{Manuel Bodirsky}
    \address{Laboratoire d'Informatique  (LIX), CNRS UMR 7161\\
    \'{E}cole Polytechnique \\91128 Palaiseau\\
    France}
    \email{bodirsky@lix.polytechnique.fr}
    \urladdr{http://www.lix.polytechnique.fr/~bodirsky/}
\author{Michael Pinsker}
    \address{\'{E}quipe de Logique Mathématique\\ Universit\'{e} Denis Diderot - Paris 7\\
	UFR de Math\'{e}matiques\\
	75205 Paris Cedex 13, France}
    \email{marula@gmx.at}
    \urladdr{http://dmg.tuwien.ac.at/pinsker/}
    \thanks{The first author has received funding from the European Research Council under the European Community's Seventh Framework Programme (FP7/2007-2013 Grant Agreement no. 257039). The second author is grateful for support through an APART-fellowship of the Austrian Academy of Sciences.}
\title[Reducts of Ramsey structures]{Reducts of Ramsey structures}
\subjclass[2010]{03C40; 08A35; 05C55; 03D15}
\newcommand{\Cycl}{\text{\it Cycl}}
\newcommand{\Betw}{\text{\it Betw}}
\newcommand{\btw}{\text{\it Betw}}
\newcommand{\sep}{\text{\it Sep}}
\newcommand{\Fresse}{Fra\"{i}ss\'{e}}
\DeclareMathOperator{\NAE}{NAE}
\DeclareMathOperator{\OIT}{1IN3}
\newcommand{\ignore}[1]{}
\newcommand{\cl}[1]{\langle #1 \rangle}
\newcommand{\dlo}{(\mathbb{Q};<)}
\newcommand{\To}{\rightarrow}
\newcommand{\nin}{\notin}
\newcommand{\mult}{\times}
\DeclareMathOperator{\Expr}{Expr}
\newcommand{\Exprpp}{\Expr_{pp}}
\newcommand{\Exprep}{\Expr_{ep}}
\DeclareMathOperator{\pp}{pp}
\DeclareMathOperator{\lex}{lex}
\DeclareMathOperator{\Csp}{CSP}
\DeclareMathOperator{\id}{id}
\DeclareMathOperator{\tp}{tp}
\DeclareMathOperator{\sw}{sw}
\DeclareMathOperator{\Aut}{Aut}
\DeclareMathOperator{\End}{End}
\DeclareMathOperator{\Pol}{Pol}
\DeclareMathOperator{\Inv}{Inv}
\newcommand{\C}{\mathcal C}
\newcommand{\D}{\mathcal D}
\newcommand{\F}{\mathcal F}
\newcommand{\G}{\mathcal G}
\renewcommand{\H}{\mathcal H}
\newcommand{\M}{\mathcal M}
\newcommand{\N}{\mathcal N}
\renewcommand{\P}{\mathcal P}
\renewcommand{\S}{\mathcal S}
\theoremstyle{plain}
    \newtheorem{thm}{Theorem}
    \newtheorem{theorem}[thm]{Theorem}
    \newtheorem{lem}[thm]{Lemma}
    \newtheorem{lemma}[thm]{Lemma}
    \newtheorem{prop}[thm]{Proposition}
    \newtheorem{cor}[thm]{Corollary}
    \newtheorem{corollary}[thm]{Corollary}
    \newtheorem{conj}[thm]{Conjecture}
\theoremstyle{definition}
    \newtheorem{defn}[thm]{Definition}
    \newtheorem{definition}[thm]{Definition}
\begin{document}

\maketitle

\begin{abstract}
One way of studying a relational structure is to investigate functions
which are related to that structure and which leave certain aspects of
the structure invariant. Examples are the automorphism group, the
self-embedding monoid, the endomorphism monoid, or the polymorphism
clone of a structure.
Such functions can be particularly well understood when the relational
structure is countably infinite and has a first-order definition in another relational structure which has a finite language, is totally ordered and homogeneous, and has the Ramsey property. This is because
in this situation, Ramsey theory provides the combinatorial tool for
analyzing these functions -- in a certain sense, it allows to
represent such functions by functions on finite sets.

This is a survey of results in model theory and theoretical computer
science obtained recently by the authors in this context. In model
theory, we approach the problem of classifying the reducts of countably infinite ordered homogeneous 
Ramsey structures in a finite language, and certain decidability questions connected with
such reducts. In theoretical computer science, we use the same
combinatorial methods in order to classify the computational
complexity for various classes of infinite-domain constraint
satisfaction problems. While the first set of applications is
obviously of an infinitary character, the second set concerns
genuinely finitary problems -- their unifying feature is that the same
tools from Ramsey theory are used in their solution.

\end{abstract}
\tableofcontents


\section{Introduction}
\begin{flushright}
\emph{``I prefer finite mathematics much more than infinite mathematics. I think that it is much more natural, much more appealing and the theory is much more beautiful. It is very concrete. It is something that you can touch and something you can feel and something to relate to.
Infinity mathematics, to me, is something that is meaningless, because it is abstract nonsense.''}

(Doron Zeilberger, February 2010)
\vspace{1cm}


\emph{``To the person who does deny infinity and says that it doesn't exist, I feel sorry for them, I don't see how such view enriches the world. Infinity may be does not exist, but it is a beautiful subject. I can say that the stars do not exist and always look down, but then I don't see the beauty of the stars. Until one has a real reason to doubt the existence of mathematical infinity, I just don't see the point.''}

(Hugh Woodin, February 2010)
\end{flushright}

\vspace{1cm}

Sometimes, infinite mathematics is not just beautiful, but also
\emph{useful}, even when one is ultimately interested in finite
mathematics. A fascinating example of this type of mathematics
is the recent theorem by Kechris, Pestov, and Todorcevic~\cite{Topo-Dynamics}, which
links Ramsey classes and topological dynamics.
A class of finite structures $\mathcal C$ closed under isomorphisms, induced substructures, and with the joint embedding property (see~\cite{HodgesLong}) is called a \emph{Ramsey class}~\cite{RamseyClasses,NesetrilSurvey} (or \emph{has the Ramsey property}) if for all $P,H \in \mathcal C$
and every $k \geq 2$ there is a $S \in \mathcal C$ such that
for every coloring of the copies of $P$ in $S$ with $k$ colors there is a copy $H'$ of $H$ in $\mathcal C$ such that all copies of $P$ in $H'$ have the same color. This is a very strong requirement --- and
certainly from the finite world.
Proving that a class has the Ramsey property
can be difficult~\cite{NesetrilSurvey}, and Ramsey theory rather provides a tool box than a theory to answer this question.

Kechris, Pestov, and Todorcevic~\cite{Topo-Dynamics} provide a characterization
of such classes in topological dynamics, connecting Ramsey classes
with \emph{extreme amenability} in (infinite) group theory, a concept from the 1960s~\cite{Granirer}.
The result can be used in two directions. One can use it to translate
deep existing Ramsey results into proofs of extreme amenability of topological groups (and this is the
main focus of the already cited article~\cite{Topo-Dynamics}). One can also use it in the other direction
to obtain a more systematic understanding of Ramsey classes.
A key insight for this direction is the result of Ne\v{s}et\v{r}il (see~\cite{RamseyClasses}) which says
that Ramsey classes $\mathcal C$
have the \emph{amalgamation property}. Hence,
by \Fresse's theorem, there exists a countably infinite homogeneous and $\omega$-categorical structure $\Gamma$ such that a finite structure is from $\mathcal C$ if and only if it embeds into $\Gamma$.
The structure $\Gamma$ is unique up to isomorphism, and is called the
\emph{\Fresse~limit} of $\mathcal C$.
Now let $\mathcal D$ be any amalgamation class whose \Fresse~limit 
$\Delta$ is bi-interpretable with $\Gamma$. By the theorem of Ahlbrandt and Ziegler~\cite{AhlbrandtZiegler}, two $\omega$-categorical structures are first-order bi-interpretable if and only if their automorphism groups are isomorphic as \emph{(abstract) topological groups}.
 In addition, the above-mentioned result from~\cite{Topo-Dynamics} shows that whether
or not $\mathcal D$ is a Ramsey class only depends on the automorphism
group $\text{Aut}(\Delta)$ of $\Delta$;
in fact, and much more interestingly, it only depends on $\text{Aut}(\Delta)$
viewed as a topological group (which has cardinality $2^\omega$).
From this we immediately get our first example where~\cite{Topo-Dynamics}
is used in the second direction, with massive consequences for finite structures: the Ramsey property is preserved under first-order bi-interpretations.
We will see another statement of this type (Proposition~\ref{prop:addingConstantsPreservesRamsey}) and more concrete applications of such statements later (in Section~\ref{sect:minimalfunctions}, Section~\ref{sect:interpret}, and Section~\ref{sect:csp}).
\vspace{.5cm}

\paragraph{\bf Constraint Satisfaction.}
Our next example where infinite mathematics is a powerful tool comes
from (finite) computer science.
A \emph{constraint satisfaction problem} is a computational problem
where we are given a set of variables and a set of constraints
on those variables, and where the task is to decide whether there
is an assignment of values to the variables that satisfies all constraints. Computational problems of this type appear in many
areas of computer science, for example in artificial intelligence,
computer algebra, scheduling, computational linguistics, and computational biology.

As an example, consider the \textsc{Betweenness} problem.
The input to this problem consists of a finite set of variables $V$,
and a finite set of triples of the form $(x,y,z)$ where $x,y,z \in V$.
The task is to find an ordering $<$ on $V$ such that for each of the given triples $(x,y,z)$ we have either $x<y<z$ or $z<y<x$.
It is well-known that this problem is NP-complete~\cite{Opatrny,GareyJohnson}, and that we therefore cannot expect to find
a polynomial-time algorithm that solves it.
In contrast, when we want to find an ordering $<$ on $V$ such
that for each of the given triples $(x,y,z)$ we have $x<y$ or $x<z$,
then the corresponding problem can be solved in polynomial time.

Many constraint satisfaction problems can be modeled formally as follows. Let $\Gamma$ be a structure with a finite relational signature.
Then the \emph{constraint satisfaction problem for $\Gamma$}, denoted by $\Csp(\Gamma)$, is the problem of deciding whether a given primitive positive sentence $\phi$ is true in $\Gamma$.
By choosing $\Gamma$ appropriately, many problems in the above
mentioned application areas can be expressed as $\Csp(\Gamma)$.
The \textsc{Betweenness} problem, for instance, can be modeled as
$\Csp(({\mathbb Q}; \btw))$ where $\mathbb Q$ are the rational numbers and $\btw = \{(x,y,z) \in {\mathbb Q}^3 \; | \; x<y<z \vee z<y<x \}$.

Note that even though the structure $\Gamma$ might be infinite,
the problem $\Csp(\Gamma)$ is always a well-defined and
\emph{discrete}
problem. Since the signature of $\Gamma$ is finite,
the complexity of $\Csp(\Gamma)$ is independent of the representation of the relation symbols of $\Gamma$ in input
instances of $\Csp(\Gamma)$. The task is to decide
whether there \emph{exists} an assignment to the variables of a given instance, and
we do not have to exhibit such a solution. Therefore, the computational problems under consideration are finitistic and concrete even when the domain of $\Gamma$ is, say, the real numbers.

There are many reasons to formulate a discrete problem as $\Csp(\Gamma)$ for an infinite structure $\Gamma$.
The advantages of such a formulation are most striking when
$\Gamma$ can be chosen to be $\omega$-categorical.
In this case, the computational complexity of $\Csp(\Gamma)$ is
fully captured by the \emph{polymorphism clone} of $\Gamma$;
the polymorphism clone can be seen as a higher-dimensional
generalization of the automorphism group of $\Gamma$.
When studying polymorphism clones, we can
apply techniques from universal algebra, and, as we will see here,
from Ramsey theory to obtain results about the computational complexity of $\Csp(\Gamma)$.

\vspace{.5cm}
\paragraph{\bf Contributions and Outline.}
In this article we give a survey presentation of 
a technique how to apply Ramsey theory
when studying automorphism groups, endomorphism monoids, and polymorphism
clones of countably infinite structures with a first-order definition in an ordered homogeneous
Ramsey structure in a finite language -- such structures are always $\omega$-categorical.
We
present applications of this technique in two fields. Let $\Delta$ be
a countable structure with a first-order definition in an ordered homogeneous Ramsey
structure in a finite language. In model theory, our technique can be used to classify the
set of all structures $\Gamma$ that are first-order definable in
$\Delta$.
In constraint satisfaction, it can be used to obtain a complete
complexity classification for the class of all problems CSP$(\Gamma)$
where
$\Gamma$ is first-order definable in $\Delta$.
We demonstrate this for $\Delta = ({\mathbb Q}; <)$, and for
$\Delta = (V;E)$, the countably infinite random graph.

\section{Reducts}\label{sect:reducts}
One way to classify relational structures on a fixed domain is by identifying two structures when they \emph{define} one another. The term ``define'' will classically stand for ``first-order define'', i.e., a structure $\Gamma_1$ has a first-order definition in a structure $\Gamma_2$ on the same domain iff all relations of $\Gamma_1$ can be defined by a first-order formula over $\Gamma_2$. When $\Gamma_1$ has a first-order definition in $\Gamma_2$ and vice-versa, then two structures are considered equivalent \emph{up to first-order interdefinability}.

Depending on the application, other notions of definability might be suitable; such notions include syntactic restrictions of first-order definability. In this paper, besides first-order definability, we will consider the notions of \emph{existential positive definability} and \emph{primitive positive definability}; in particular, we will explain the importance of the latter notion in theoretical computer science in Section~\ref{sect:csp}.

The structures which we consider in this article will all be countably infinite, and we will henceforth assume this property without further mentioning it. A structure is called \emph{$\omega$-categorical} if all countable models of its first-order theory are isomorphic. We are interested in the situation where all structures to be classified are \emph{reducts} of a single countable $\omega$-categorical structure in the following sense (which differs from the standard definition of a reduct and morally follows e.g.~\cite{RandomReducts}).

\begin{definition}
    Let $\Delta$ be a structure. A \emph{reduct} of $\Delta$ is a structure with the same domain as $\Delta$ all of whose relations can be defined by a first-order formula in $\Delta$.
\end{definition}

When all structures under consideration are reducts of a countably infinite base structure $\Delta$ which is $\omega$-categorical, then there are natural ways of obtaining classifications up to first-order, existential positive, or primitive positive interdefinability by means of certain sets of functions. In this section, we explain these ways, and give some examples of classifications that have been obtained in the past. In the following sections, we then observe that these results have actually been obtained in a more specific context than $\omega$-categoricity, namely, where the structures are reducts of an \emph{ordered Ramsey} structure $\Delta$ which has a finite relational signature and which is \emph{homogeneous} in the sense that every isomorphism between finite induced substructures of $\Delta$ can be extended to an automorphism of $\Delta$. 
We further develop a general framework for proving such results in this context.

We start with first-order definability. Consider the assignment that sends every structure $\Gamma$ with domain $D$ to its automorphism group $\Aut(\Gamma)$. Automorphism groups are closed sets in the convergence topology of all permutations on $D$, and conversely, every closed permutation group on $D$ is the automorphism group of a relational structure with domain $D$. The closed permutation groups on $D$ form a complete lattice, where the meet of a set of groups is given by their intersection. Similarly, the set of those relational structures on $D$ which are \emph{first-order closed}, i.e., which contain all relations which they define by a first-order formula, forms a lattice, where the meet of a set $S$ of such structures is the structure which has those relations that are contained in all structures in $S$. Now when $\Gamma$ is a countable $\omega$-categorical structure, then it follows from the proof of the theorem of Ryll-Nardzewki (see~\cite{HodgesLong}) that its automorphism group $\Aut(\Gamma)$ still has the first-order theory of $\Gamma$ encoded in it. And indeed we can, up to first-order interdefinability, recover $\Gamma$ from its automorphism group as follows: For a set $\F$ of finitary functions on $D$, let $\Inv(\F)$ be the structure on $D$ which has those relations $R$ which are \emph{invariant under $\F$}, i.e., those relations that
contain $f(r_1,\ldots,r_n)$ (calculated componentwise) whenever $f\in \F$ and $r_1,\ldots,r_n\in R$.

\begin{thm}\label{thm:groups-fo}
    Let $\Delta$ be $\omega$-categorical. Then the mapping $\Gamma \mapsto \Aut(\Gamma)$ is an antiisomorphism between the lattice of first-order closed reducts of $\Delta$ and the lattice of closed permutation groups containing $\Aut(\Delta)$. 
    The inverse mapping is given by $\G\mapsto\Inv(\G)$.
\end{thm}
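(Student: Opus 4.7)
The plan is to recognize the statement as the standard Galois correspondence between permutation groups and relational structures on $D$, with the Ryll-Nardzewski theorem providing the translation between $\Aut(\Gamma)$-invariance and first-order definability in $\Gamma$. Concretely, I would verify that both maps land in the intended lattices, and that they are mutually inverse and order-reversing. Well-definedness on one side is immediate: as $\Gamma$ is a reduct of $\Delta$, every $\alpha\in\Aut(\Delta)$ preserves every first-order definable relation of $\Delta$, so $\Aut(\Gamma)$ is a closed permutation group containing $\Aut(\Delta)$. For the other direction, given a closed $\G\supseteq\Aut(\Delta)$, every $\G$-invariant relation is a fortiori $\Aut(\Delta)$-invariant, hence (by Ryll-Nardzewski applied to the $\omega$-categorical $\Delta$) first-order definable in $\Delta$, so $\Inv(\G)$ is a first-order closed reduct of $\Delta$.

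For the identity $\Inv(\Aut(\Gamma))=\Gamma$, I would first observe that $\Gamma$ is itself $\omega$-categorical: by Ryll-Nardzewski, $\omega$-categoricity of a countable structure is equivalent to oligomorphicity of its automorphism group, and $\Aut(\Gamma)\supseteq\Aut(\Delta)$ inherits oligomorphicity from $\Aut(\Delta)$. A second application of Ryll-Nardzewski, now to $\Gamma$, identifies the $\Aut(\Gamma)$-invariant relations with those first-order definable in $\Gamma$, and first-order closedness of $\Gamma$ then gives $\Inv(\Aut(\Gamma))=\Gamma$. For the identity $\Aut(\Inv(\G))=\G$, the inclusion $\G\subseteq\Aut(\Inv(\G))$ is tautological; for the converse, given $f\in\Aut(\Inv(\G))$ and a finite tuple $a=(a_1,\ldots,a_n)\in D^n$, note that the $\G$-orbit of $a$ is an $n$-ary $\G$-invariant relation and hence a relation of $\Inv(\G)$, so $f$ must map $a$ into this orbit, producing a $g\in\G$ with $g(a)=f(a)$. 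Arguing symmetrically for $f^{-1}\in\Aut(\Inv(\G))$ places $f$ in the closure of $\G$, and closedness yields $f\in\G$. Order-reversal of both maps is then obvious ($\Gamma_1\subseteq\Gamma_2$ implies $\Aut(\Gamma_2)\subseteq\Aut(\Gamma_1)$, and dually), which upgrades the bijection to an antiisomorphism of complete lattices.

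The arguments above are essentially bookkeeping around Ryll-Nardzewski, which is the only nontrivial input and is used three times: to get oligomorphicity of $\Aut(\Gamma)$ from that of $\Aut(\Delta)$, and to translate invariance into first-order definability for both $\Delta$ and $\Gamma$. The one place where one must pay attention is the closure step in $\Aut(\Inv(\G))=\G$: to conclude $f\in\G$ from finite approximations, one must work in the topology on $\mathrm{Sym}(D)$ in which pointwise convergence of both a permutation and its inverse is required, which is precisely why $f^{-1}$ is also approximated via the orbit argument.
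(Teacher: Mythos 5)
Your proof is correct and uses precisely the ingredient the paper points to (the Ryll-Nardzewski characterization of $\omega$-categoricity via oligomorphicity, applied to both $\Delta$ and $\Gamma$, together with the standard orbit-closure argument for $\Aut(\Inv(\G))=\G$); the paper itself states Theorem~\ref{thm:groups-fo} as a known fact without proof, so there is no textual proof to compare against, but your argument is exactly the expected one. Your remark on the closure step is a reasonable precaution, though in fact for a \emph{subgroup} of $\mathrm{Sym}(D)$ closure in the product topology from $D^D$ already coincides with closure in the group topology (one can recover the finite behavior of $g^{-1}$ from that of $g$ on the preimage of a finite set), so the symmetric argument for $f^{-1}$, while harmless, is not strictly needed.
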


This connection between closed permutation groups and first-order definability has been exploited several times in the past in order to obtain complete classifications of reducts of $\omega$-categorical structures. For example, let $\Delta$ be the order of the rational numbers -- we write $\Delta=\dlo$. Then it has been shown in~\cite{Cameron5} that there are exactly five reducts of $\Delta$, up to first-order interdefinability, which we will define in the following.

On the permutation side, let $\leftrightarrow$ be the function that sends every $x\in \mathbb{Q}$ to $-x$. For our purposes, we can equivalently  choose $\leftrightarrow$ to be any permutation that inverts the order $<$ on $\mathbb{Q}$. For any fixed irrational real number $\alpha$, let $\circlearrowright$ be any permutation on $\mathbb{Q}$ with the property that $x< y< \alpha < u< v$ implies ${\circlearrowright}{(u)}< {\circlearrowright}(v)<{\circlearrowright}(x)<{\circlearrowright}(y)$, for all $x,y,u,v\in \mathbb{Q}$. We will consider closed groups \emph{generated} by these permutations: For a set of permutations $\F$ and a closed permutation group $\G$, we say that $\F$ generates $\G$ iff $\G$ is the smallest closed group containing $\F$.

On the relational side, for $x_1,\ldots,x_n\in \mathbb{Q}$ write $\overrightarrow{x_1\ldots x_n}$ when $x_1<\ldots<x_n$. Then we define a ternary relation $\Betw$ on $\mathbb{Q}$ by $\Betw:=\{(x,y,z)\in \mathbb{Q}^{3}\;|\; \overrightarrow{xyz}\vee \overrightarrow{zyx}\}$. Define another ternary relation $\Cycl$ by $\Cycl:=\{(x,y,z) \in\mathbb{Q}^{3}\;|\; \overrightarrow{xyz}\vee \overrightarrow{yzx}\vee \overrightarrow{zxy}\}$. Finally, define a $4$-ary relation $\sep$ by 
\begin{align*}
 \{(x_1,y_1,x_2,y_2) \in \mathbb{Q}^{4} \;  | \; &
 \overrightarrow{x_1x_2y_1y_2} \vee \overrightarrow{x_1y_2y_1x_2} \vee
 \overrightarrow{y_1x_2x_1y_2} \vee \overrightarrow{y_1y_2x_1x_2} \\
 \vee & \overrightarrow{x_2x_1y_2y_1}  \vee
 \overrightarrow{x_2y_1y_2x_1} \vee \overrightarrow{y_2x_1x_2y_1} \vee \overrightarrow{y_2y_1x_2x_1}\} \,.
 \end{align*}

\begin{thm}[Cameron~\cite{Cameron5}]\label{thm:cameron5}
    Let $\Gamma$ be a reduct of $\dlo$. Then exactly one of the following holds:
    \begin{itemize}
        \item $\Gamma$ is first-order interdefinable with $\dlo$; equivalently, $\Aut(\Gamma)={\Aut(\dlo)}$.
        \item $\Gamma$ is first-order interdefinable with $(\mathbb{Q};\Betw)$; equivalently, $\Aut(\Gamma)$ equals the closed group generated by $\Aut(\dlo)$ and $\leftrightarrow$.
        \item $\Gamma$ is first-order interdefinable with $(\mathbb{Q};\Cycl)$; equivalently, $\Aut(\Gamma)$ equals the closed group generated by $\Aut(\dlo)$ and $\circlearrowright$.
        \item $\Gamma$ is first-order interdefinable with $(\mathbb{Q};\sep)$; equivalently, $\Aut(\Gamma)$ equals the closed group generated by $\Aut(\dlo)$ and $\{\leftrightarrow, \circlearrowright\}$.
        \item $\Gamma$ is first-order interdefinable with $(\mathbb{Q};=)$; equivalently, $\Aut(\Gamma)$ equals the group of all permutations on $\mathbb{Q}$.
    \end{itemize}
\end{thm}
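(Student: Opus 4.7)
The plan is to apply Theorem~\ref{thm:groups-fo}, reducing the classification to that of closed permutation groups on $\mathbb{Q}$ containing $\Aut(\dlo)$. I would first verify that the five candidate groups are pairwise distinct by computing their invariance patterns on the relations $<, \Betw, \Cycl, \sep$: for instance, $\leftrightarrow$ preserves $\Betw$ and $\sep$ but neither $<$ nor $\Cycl$, while $\circlearrowright$ preserves $\Cycl$ and $\sep$ but neither $<$ nor $\Betw$. Together with the fact that $\sep$ is not invariant under every permutation of $\mathbb{Q}$, this yields strict containments between the five groups and hence, by Theorem~\ref{thm:groups-fo}, pairwise non-interdefinability of the structures.

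For the main direction, let $G$ be a closed permutation group strictly containing $\Aut(\dlo)$, and pick $g \in G \setminus \Aut(\dlo)$. The key tool is Ramsey-theoretic canonization: a function $f\colon \mathbb{Q} \to \mathbb{Q}$ is \emph{canonical} with respect to $\dlo$, possibly with added constants $c_1, \ldots, c_m \in \mathbb{Q}$, if for every $k$ the order type of $(f(x_1), \ldots, f(x_k))$ depends only on the order type of $(x_1, \ldots, x_k, c_1, \ldots, c_m)$. By iterating Ramsey's theorem on $k$-element subsets of $\mathbb{Q}$ (colored by the order type of their image under $g$), diagonalizing over $k$, and using homogeneity of $\dlo$ to transfer the canonical behavior from an extracted infinite set $Y \subseteq \mathbb{Q}$ to all of $\mathbb{Q}$, one obtains, by closedness of $G$, a function in $G$ that is canonical with respect to $\dlo$ with some parameters. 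A combinatorial enumeration of canonical permutations then shows that, up to composition with elements of $\Aut(\dlo)$ on either side, the only canonical permutations of $\dlo$ with parameters are $\id$, $\leftrightarrow$, $\circlearrowright$, and $\leftrightarrow \circ \circlearrowright$; for instance, $\circlearrowright$ is canonical with one added constant marking the ``break point.''

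Hence $G$ contains at least one of $\leftrightarrow$, $\circlearrowright$, $\leftrightarrow \circ \circlearrowright$, and so $G$ contains one of the four proper supergroups of $\Aut(\dlo)$ from the list. It remains to show that if $G$ strictly contains the group $G_{\leftrightarrow,\circlearrowright}$ generated by $\Aut(\dlo)$, $\leftrightarrow$, and $\circlearrowright$ --- which equals $\Aut(\mathbb{Q}; \sep)$ --- then $G$ is the full symmetric group on $\mathbb{Q}$. For this, pick $g \in G$ not preserving $\sep$ and apply the same canonization argument; since $g$ violates $\sep$ on some $4$-tuple, the Ramsey-derived canonical function mixes $\Aut(\mathbb{Q}; \sep)$-orbits on $4$-tuples, and iterating within the closed group $G$ forces $G$ to be $k$-transitive for every $k$, hence equal to the full symmetric group on $\mathbb{Q}$.

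The main obstacle is the enumeration of canonical functions together with the argument that any permutation outside $\Aut(\mathbb{Q}; \sep)$ forces full transitivity. Given the canonical behavior of $f$ on $k$-tuples relative to $m$ parameters, one must verify by a careful combinatorial case analysis on pairs, triples, and quadruples that the only canonical permutations of $\dlo$ are the four listed. The Ramsey property of $\dlo$, which in this simplest case is just the classical Ramsey theorem for linearly ordered sets, is used essentially throughout; this proof serves as the prototype for the general framework developed later in the paper for arbitrary ordered homogeneous Ramsey structures in a finite language.
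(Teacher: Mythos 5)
The paper never proves Theorem~\ref{thm:cameron5}: it cites Cameron~\cite{Cameron5}, whose 1976 argument is a direct group-theoretic analysis of highly set-transitive permutation groups and predates the Ramsey/canonical-function machinery by decades. Your sketch is instead a blueprint for re-proving the theorem via the framework of Sections~\ref{sect:ramseyclasses}--\ref{sect:minimalfunctions}, which is a legitimate modern route (it is essentially what underlies Theorem~\ref{thm:minimalDLOClones}), but it is not ``the paper's proof'' of this statement, and as a blind proposal it has genuine gaps.

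The first gap is categorical: Lemma~\ref{lem:canonicalOnArbitrarilyLargeFinite} and Proposition~\ref{lem:generatesCanonical} (with the refinement using constants) produce a canonical function in the closed \emph{monoid} generated by $g$ and $\Aut(\dlo)$, i.e.\ in the closure $\overline{G}$ of $G$ inside $\mathbb{Q}^{\mathbb{Q}}$. Elements of $\overline{G}$ are injective but need not be surjective, and even when they are, their inverses need not lie in $\overline{G}$. Your conclusion ``hence $G$ contains at least one of $\leftrightarrow$, $\circlearrowright$, $\leftrightarrow\circ\circlearrowright$'' therefore does not follow directly; you need an extra step passing from ``$\overline{G}$ contains a function of behavior $B$'' to ``$G$ contains a permutation of behavior $B$.'' This can be done by a back-and-forth argument building the permutation and its inverse simultaneously (using that $G$ is a closed group and $\dlo$ is homogeneous), or by invoking model-completeness of reducts of $\dlo$, but you never address it.

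The second gap is that the entire combinatorial core is asserted rather than argued. You phrase the enumeration as one of ``canonical permutations,'' but a canonical injection from $(\mathbb{Q};<,c_1,\dots,c_m)$ to $(\mathbb{Q};<)$ is a priori just a map on order types over the parameters, and there are many behaviors beyond your four: with a single constant $c$ one can, for instance, preserve order on $(-\infty,c)$ and reverse it on $(c,\infty)$, or permute the zones in ways realized by powers of $\circlearrowright$, or do both. The actual content of Cameron's theorem is precisely that every such ``mixed'' canonical behavior either already lies in one of the four nontrivial closed groups or, together with $\Aut(\dlo)$, generates the full symmetric group by violating $\sep$; your final paragraph replaces this case analysis with a bare appeal to $k$-transitivity. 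You correctly flag the enumeration as ``the main obstacle,'' but that concession is the problem: without it, and without the monoid-to-group step, the sketch establishes none of the five cases.
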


Another instance of the application of Theorem~\ref{thm:groups-fo} in the classification of reducts up to first-order interdefinability has been provided by Thomas~\cite{RandomReducts}. Let $G=(V;E)$ be the random graph, i.e., the up to isomorphism unique countably infinite graph which is homogeneous and which contains all finite graphs as induced subgraphs. It turns out that  up to first-order interdefinability, $G$ has precisely five reducts, too.

On the permutation side, observe that the graph $\bar{G}$ obtained by making two distinct vertices $x,y\in V$ adjacent iff they are not adjacent in $G$ is isomorphic to $G$; let $-$ be any permutation on $V$ witnessing this isomorphism. Moreover, for any fixed vertex $0\in V$, the graph obtained by making all vertices which are adjacent with $0$ non-adjacent with $0$, and all vertices different from $0$ and non-adjacent with $0$ adjacent with $0$, is isomorphic to $G$. Let $\sw$ be any permutation on $V$ witnessing this fact.

On the relational side, define for all $k\geq 2$ a $k$-ary relation $R^{(k)}$ on $V$ by
\begin{align*}
R^{(k)}:=\{(x_1,\ldots,x_k)\;|\;&\text{ all } x_i\text{ are distinct},\\ &\text{ and the number of edges on }\{x_1,\ldots,x_k\}\text{ is odd}\}.
\end{align*}
\begin{thm}[Thomas~\cite{RandomReducts}]\label{thm:thomas5}
    Let $\Gamma$ be a reduct of the random graph $G=(V;E)$. Then exactly one of the following holds:
    \begin{itemize}
        \item $\Gamma$ is first-order interdefinable with $G$; equivalently, $\Aut(\Gamma)=\Aut(G)$.
        \item $\Gamma$ is first-order interdefinable with $(V;R^{(3)})$; equivalently, $\Aut(\Gamma)$ equals the closed group generated by $\Aut(G)$ and $\sw$.
        \item $\Gamma$ is first-order interdefinable with $(V;R^{(4)})$; equivalently, $\Aut(\Gamma)$ equals the closed group generated by $\Aut(G)$ and $-$.
        \item $\Gamma$ is first-order interdefinable with $(V;R^{(5)})$; equivalently, $\Aut(\Gamma)$ equals the closed group generated by $\Aut(G)$ and $\{\sw, -\}$.
        \item $\Gamma$ is first-order interdefinable with $(V;=)$; equivalently, $\Aut(\Gamma)$ equals the group of all permuations on $V$.
    \end{itemize}
\end{thm}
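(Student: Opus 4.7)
The plan is to invoke Theorem~\ref{thm:groups-fo} to translate the classification into one of the closed permutation groups $\mathcal{G}$ on $V$ with $\Aut(G)\leq \mathcal{G}\leq\text{Sym}(V)$. First I would verify that the five groups listed are genuinely distinct by a parity calculation. Complementing a $k$-vertex induced subgraph flips $\binom{k}{2}$ edges, so the permutation $-$ preserves $R^{(k)}$ iff $\binom{k}{2}$ is even; a switch at a single vertex $v$ flips exactly $k-1$ of the edges of any $k$-vertex induced subgraph containing $v$ and leaves the edges of subgraphs avoiding $v$ alone, so $\sw$ preserves $R^{(k)}$ iff $k$ is odd. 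Hence $R^{(3)}$ separates $\sw$ from $-$, $R^{(4)}$ separates $-$ from $\sw$, $R^{(5)}$ is preserved by both but not by a transposition that moves an element out of the support of a $5$-tuple (using the universality of $G$ to find a witness), and all $R^{(k)}$ fail for $\text{Sym}(V)$. Combined with the first-order definability of each $R^{(k)}$ from $E$, this confirms that the five groups are pairwise distinct.

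The substantive step is to show that no further closed group sits strictly between $\Aut(G)$ and $\text{Sym}(V)$. Pick a candidate $\mathcal{G}$ and some $f\in\mathcal{G}\setminus\Aut(G)$. Here I would enter the Ramsey framework advertised in the paper: expand $G$ by a generic linear order $<$ to obtain $(V;E,<)$, the \Fresse~limit of all finite ordered graphs, which is homogeneous, has a finite relational language, and has the Ramsey property by the theorem of Ne\v{s}et\v{r}il and R\"odl. Using Ramsey together with a standard compactness and diagonalization argument, one extracts in the closure (in the topology of pointwise convergence) of the set $\{\alpha\circ f\circ\beta:\alpha,\beta\in\Aut((V;E,<))\}$ an injective function $g$ that is \emph{canonical} with respect to $(V;E,<)$, meaning that the $(V;E,<)$-type of $(g(x_1),\dots,g(x_n))$ depends only on the $(V;E,<)$-type of $(x_1,\dots,x_n)$.

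The step I expect to be the main obstacle is the enumeration of the possible canonical behaviors of $g$, followed by the verification that each of them, together with $\Aut(G)$ and the closure operation, yields one of the distinguished permutations. On a pair of points, the behavior of $g$ is encoded by a map from the edge/non-edge types of ordered pairs in $(V;E,<)$ to such types in the image, and the finitely many possibilities compatible with injectivity correspond, after composition with suitable elements of $\Aut(G)$, to the behaviors of $\id$, $-$, $\sw$, or $\sw\circ -$, plus a degenerate behavior in which $g$ identifies edges and non-edges on pairs. In the degenerate case the universality and homogeneity of $G$ allow us to realize an arbitrary permutation of any finite subset and thus to conclude $\mathcal{G}=\text{Sym}(V)$; in each of the other cases homogeneity is used in a back-and-forth fashion to upgrade the canonical injection $g$ into an honest permutation of $V$ that lies in $\mathcal{G}$ and that acts on pairs exactly like $-$ or $\sw$. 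Hence $\mathcal{G}$ must contain the corresponding generator, which together with the first part of the argument shows that $\mathcal{G}$ is one of the five groups listed.
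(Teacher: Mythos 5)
The paper does not actually prove Theorem~\ref{thm:thomas5}: it is quoted as a black-box result of Thomas~\cite{RandomReducts}, whose original argument is a direct combinatorial analysis of closed supergroups of $\Aut(G)$ that predates the Ramsey/canonical-function machinery. Your proposal instead tries to rederive the theorem from the machinery the survey develops in Sections~\ref{sect:ramseyclasses}--\ref{sect:minimalfunctions}, which is a legitimate and interesting route, and your parity computations showing that the five named groups are pairwise distinct are correct ($-$ preserves $R^{(k)}$ iff $\binom{k}{2}$ is even, $\sw$ preserves $R^{(k)}$ iff $k$ is odd).

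The substantive step, however, has a real gap. You extract a canonical function $g$ in the closure of $\{\alpha\circ f\circ\beta : \alpha,\beta\in\Aut((V;E,<))\}$ with respect to the plain structure $(V;E,<)$, without first expanding by constants. The paper warns precisely against this in the discussion following Proposition~\ref{lem:generatesCanonical}: the identity is always a canonical function generated by $f$, so ``$f$ generates a canonical function'' is vacuous unless one pins down constants $c_1,\dots,c_n$ on which $f\notin\Aut(G)$ is witnessed and passes to $(\Delta,c_1,\dots,c_n)$, using Proposition~\ref{prop:addingConstantsPreservesRamsey} to keep the Ramsey property and Lemma~\ref{lem:generatesCanonicalWithConstants} to get a canonical $g$ that \emph{agrees with $f$} on the constants. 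Relatedly, your enumeration of canonical behaviours on pairs cannot produce $\sw$ at all: $\sw$ is canonical only as a function from $(V;E,0)$ to $(V;E)$, not from $(V;E,<)$ to $(V;E)$, since its action on a pair depends on whether the pair contains the distinguished vertex $0$. So without constants your list of possible behaviours is simply wrong, and the argument cannot recover the switch group. The repair is exactly the content of Corollary~\ref{cor:finiteMinimalSelfEmbeddings} and Theorem~\ref{thm:randomMinimalUnary} in the paper: one must classify canonical functions $(V;E,<,c_1,\dots,c_n)\to(V;E)$ (yielding the identity-, $-$-, $\sw$-, $e_E$-, $e_N$-, and constant-like behaviours), and only then run the back-and-forth upgrades to permutations.
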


In a similar fashion, the reducts of several prominent $\omega$-categorical structures $\Delta$ have been classified up to first-order interdefinability by finding all closed supergroups of $\Aut(\Delta)$. Examples are:

\begin{itemize}
    \item The countable homogeneous $K_n$-free graph, i.e., the unique countable homogeneous graph which contains precisely those finite graphs which do not contain a clique of size $n$ as induced subgraphs, has 2 reducts up to first-order interdefinability (Thomas~\cite{RandomReducts}), for all $n\geq 3$.
    \item The countable homogeneous $k$-hypergraph has $2^k+1$ reducts up to first-order interdefinability (Thomas~\cite{Thomas96}), for all $k\geq 2$.
    \item The structure $(\mathbb{Q};<,0)$, i.e., the order of the rationals which in addition ``knows'' one of its points, has 116 reducts up to first-order interdefinability (Junker and Ziegler~\cite{JunkerZiegler}).
\end{itemize}

All these examples have in common that the structures have a high degree of symmetry in the sense that they are homogeneous in a finite language -- intuitively, one would expect the automorphism group of such a structure to be rather large. And indeed, Thomas conjectured in~\cite{RandomReducts}:

\begin{conj}[Thomas~\cite{RandomReducts}]\label{conj:thomas}
    Let $\Delta$ be a countable relational structure which is homogeneous in a finite language. Then $\Delta$ has finitely many reducts up to first-order interdefinability.
\end{conj}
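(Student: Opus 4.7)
The plan is to attack the conjecture through the lattice antiisomorphism of Theorem~\ref{thm:groups-fo}. Since $\Delta$ is $\omega$-categorical (being homogeneous in a finite relational language), classifying the first-order closed reducts of $\Delta$ is equivalent to classifying the closed permutation groups $\G$ on the domain $D$ with $\Aut(\Delta)\subseteq \G \subseteq \operatorname{Sym}(D)$. So the task becomes: show that the closed supergroup lattice of $\Aut(\Delta)$ is finite. The strategy is to extract, from any such $\G$ which properly extends $\Aut(\Delta)$, a ``canonical'' witnessing permutation, and then to argue that there are only finitely many canonical behaviors possible.

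First I would pass to an ordered Ramsey expansion. By the Kechris--Pestov--Todorcevic correspondence and the standing assumption underlying this survey, one expects (and in all known concrete cases can verify) that $\Delta$ admits a homogeneous expansion $\Delta^\ast$ by a linear order $\prec$, still in a finite relational language, that enjoys the Ramsey property. Fix such a $\Delta^\ast$. Given $g\in \G\setminus \Aut(\Delta)$, iterated applications of the Ramsey property to colorings of tuples by their $\Delta^\ast$-type together with their image-type under $g$ yield, in the closure of $\{\alpha g \beta : \alpha,\beta \in \Aut(\Delta^\ast)\}$, a permutation $\tilde g$ which is \emph{canonical} with respect to $\Delta^\ast$, meaning that the $\Delta^\ast$-type of $(\tilde g(x_1),\ldots,\tilde g(x_n))$ depends only on the $\Delta^\ast$-type of $(x_1,\ldots,x_n)$. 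Because $\Delta^\ast$ is $\omega$-categorical with finitely many $n$-types for each $n$, a canonical function is determined by a map between finite type spaces, and hence there are only finitely many canonical behaviors of each arity.

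Next I would try to upgrade ``finitely many canonical behaviors at each arity'' to ``finitely many generated closed groups.'' The key intermediate step is to show that every closed supergroup $\G\supsetneq \Aut(\Delta)$ is topologically generated by $\Aut(\Delta)$ together with a single canonical (modulo $\Aut(\Delta^\ast)$) permutation; equivalently, that the canonical behaviors form a well-quasi-ordered set under the natural generation preorder $\tilde g_1 \preceq \tilde g_2$ iff $\tilde g_1$ lies in the closed group generated by $\Aut(\Delta)\cup\{\tilde g_2\}$. Given such a finite basis, Theorem~\ref{thm:groups-fo} immediately yields finiteness of the reduct lattice. The model cases of $\dlo$ (Theorem~\ref{thm:cameron5}), the random graph (Theorem~\ref{thm:thomas5}), and the $K_n$-free and $k$-hypergraph results cited above are meant to be run as prototypes of this scheme: in each case a handful of canonical permutations (such as $\leftrightarrow$, $\circlearrowright$, $-$, $\sw$) exhaust the possibilities.

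The hard part, and the reason the conjecture remains open in this generality, is precisely the last step: one needs a \emph{uniform} bound, across all homogeneous $\Delta$ in finite languages, on the number of canonical permutations modulo the generation preorder. Two specific obstacles stand in the way. First, one must know that $\Delta$ admits an ordered homogeneous Ramsey expansion in a finite language; although this is conjectured to always hold and has been verified in a wide range of cases, there is no general theorem. Second, even given $\Delta^\ast$, controlling the combinatorics of canonical maps $\Delta^\ast\to \Delta^\ast$ and proving the requisite well-quasi-ordering property is delicate, and a priori the number of canonical behaviors at arity $n$ can grow with $n$ without any of them being redundant in the generation preorder. Any proof of Conjecture~\ref{conj:thomas} along these lines will therefore have to combine a general Ramsey-expansion theorem with a structural reduction that collapses higher-arity canonical behaviors to finitely many unary ones.
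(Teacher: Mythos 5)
The statement you were asked about is labeled a \emph{conjecture} in the paper (Thomas's conjecture), and indeed neither the paper nor anyone else has a proof of it; you have correctly recognized this and refrained from claiming one. Your sketch of the \emph{intended} approach tracks the machinery developed in Sections~\ref{sect:ramseyclasses}--\ref{sect:minimalfunctions} of the paper quite faithfully: pass to an ordered homogeneous Ramsey expansion in a finite language, use the Ramsey property to extract canonical functions, observe that canonical behaviors are finite data on the type space, and try to push this through Theorem~\ref{thm:groups-fo}. You also correctly name the two known obstructions: (i) it is open whether every homogeneous structure in a finite relational language has an ordered homogeneous Ramsey expansion in a finite language (the paper itself says only that no counterexample is known), and (ii) even granting such an expansion, there is no general argument bounding the closed supergroups by a finite set of canonical generators.

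One refinement worth flagging, since it is a real technical wrinkle that your sketch glides over: the canonical function $\tilde g$ extracted from a permutation $g$ via the Ramsey argument lies in the \emph{closure} of $\{\alpha g\beta : \alpha,\beta\in\Aut(\Delta^\ast)\}$ in the topology of $D^D$, and there is no guarantee that $\tilde g$ is a bijection. So the canonicalization machinery natively produces elements of transformation monoids (self-embeddings, endomorphisms), not of permutation groups, and Theorem~\ref{thm:groups-fo} concerns groups. Bridging that gap is exactly why results like Corollary~\ref{cor:mc} (model-completeness of reducts, so that $\Aut(\Gamma)$ is dense in the self-embedding monoid) are needed in the concrete classifications: one first analyzes the monoid of self-embeddings via canonical functions and then recovers the automorphism group by density. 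Any eventual proof of Conjecture~\ref{conj:thomas} along these lines would have to handle this monoid-versus-group issue uniformly, in addition to the two obstacles you list. You also mention a ``well-quasi-ordering'' of canonical behaviors; this is not quite the right framing, since for a fixed finite-language $\Delta^\ast$ and fixed number of added constants the set of canonical unary behaviors is already \emph{finite} outright --- the difficulty is rather showing that every proper closed supergroup is generated by $\Aut(\Delta)$ together with such a canonical function in a way that actually witnesses the proper containment (the triviality problem discussed around Proposition~\ref{lem:generatesCanonical}), and that the number of constants needed can be bounded independently of the reduct.
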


It turns out that all the examples above are not only homogeneous in a finite language; in fact, they all have a first-order definition in (in other words: are themselves reducts of) an \emph{ordered Ramsey structure} which is homogeneous in a finite language. Functions on such structures, in particular automorphisms of reducts, can be analyzed by the means of Ramsey theory, and we will outline a general method for classifying the reducts of such structures in Sections~\ref{sect:ramseyclasses} to~\ref{sect:decidability}.

We now turn to analogs of Theorem~\ref{thm:groups-fo} for syntactic restrictions of first-order logic.
A first-order formula is called \emph{existential} iff it is of the form $\exists x_1\ldots\exists x_n.\ \phi$, where $\phi$ is quantifier-free. It is called \emph{existential positive} iff it is existential and does not contain any negations. Now observe that similarly to permutation groups, the \emph{endomorphism monoid} $\End(\Delta)$ of a relational structure $\Delta$ with domain $D$ is always closed in the pointwise convergence topology on the space of all functions from $D$ to $D$, and that every closed transformation monoid $\M$ acting on $D$ is the endomorphism monoid of the structure $\Inv(\M)$, i.e., the structure with domain $D$ which contains those relations which are invariant under all functions in $\M$. Note also that the set of closed transformation monoids on $D$, ordered by inclusion, forms a complete lattice, and that likewise the set of all existential positive closed structures forms a complete lattice. The analog to Theorem~\ref{thm:groups-fo} for existential positive definability is an easy consequence of the homomorphism preservation theorem (see~\cite{HodgesLong}) and goes like this:

\begin{thm}\label{thm:monoids-expos}
    Let $\Delta$ be $\omega$-categorical. Then the mapping $\Gamma\mapsto \End(\Gamma)$ is an antiisomorphism between the lattice of existential positive closed reducts of $\Delta$ and the lattice of closed transformation monoids containing $\Aut(\Delta)$. The inverse mapping is given by $\M\mapsto\Inv(\M)$.
\end{thm}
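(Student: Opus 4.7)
The plan is to imitate the proof of the analogous statement for first-order definability (Theorem~\ref{thm:groups-fo}), replacing permutation groups by transformation monoids and first-order formulas by existential positive formulas; in place of Ryll-Nardzewski alone, one needs (a version of) the homomorphism preservation theorem in order to pass from invariance to definability. The argument splits into three steps: well-definedness of both maps, the Galois correspondence, and the lattice antiisomorphism.

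For well-definedness, I would first verify that $\End(\Gamma)$ is closed in the topology of pointwise convergence (a routine topological check) and that $\Aut(\Delta)\subseteq \End(\Gamma)$, since the relations of a reduct $\Gamma$ of $\Delta$ are first-order definable in $\Delta$ and are therefore preserved by $\Aut(\Delta)$. Conversely, if $\M$ is any closed transformation monoid containing $\Aut(\Delta)$, then the relations of $\Inv(\M)$ are in particular $\Aut(\Delta)$-invariant, and by $\omega$-categoricity such relations are finite unions of orbits of $\Aut(\Delta)$ on tuples (Ryll-Nardzewski), hence first-order definable in $\Delta$; so $\Inv(\M)$ is a reduct of $\Delta$, and it is existential positive closed by its very definition as the set of all $\M$-invariant relations.

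For the Galois correspondence $\Gamma=\Inv(\End(\Gamma))$ and $\M=\End(\Inv(\M))$, the inclusions $\Gamma\subseteq\Inv(\End(\Gamma))$ and $\M\subseteq\End(\Inv(\M))$ are immediate from the definitions. To prove $\End(\Inv(\M))\subseteq\M$, I would exploit that the orbits of $n$-tuples under $\M$ are themselves $\M$-invariant relations and hence lie in $\Inv(\M)$; any $f\in\End(\Inv(\M))$ must therefore preserve these orbits, so on every finite subset of the domain $f$ agrees with some element of $\M$, and closedness of $\M$ yields $f\in\M$. The main obstacle is the remaining inclusion $\Inv(\End(\Gamma))\subseteq\Gamma$: every relation preserved by $\End(\Gamma)$ must be existential positive definable in $\Gamma$. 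Since $\End(\Gamma)$ contains $\Aut(\Gamma)$, such a relation is first-order definable in $\Gamma$ by Ryll-Nardzewski; the homomorphism preservation theorem, combined with the abundance of endomorphisms guaranteed in the $\omega$-categorical setting (orbits of tuples can be mapped to one another freely enough to witness any would-be negation or universal quantifier), then upgrades this first-order definition to an existential positive one. This is precisely the step to which the statement alludes when it refers to the homomorphism preservation theorem, and it is the only nontrivial model-theoretic ingredient of the proof.

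Finally, both maps reverse inclusion by inspection, and both send intersections to intersections (meets are intersections on both sides), so they constitute an antiisomorphism of the two complete lattices.
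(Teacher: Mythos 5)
Your proposal is correct and follows the route the paper itself indicates: the paper merely states the result and attributes it to the homomorphism preservation theorem, and your sketch fills in exactly that argument. The one step that deserves more care than you give it is the passage from ``$R$ is preserved by $\End(\Gamma)$'' to the hypothesis of the homomorphism preservation theorem, namely that the first-order formula $\phi$ defining $R$ over $\Gamma$ is preserved by all homomorphisms between arbitrary models of $\operatorname{Th}(\Gamma)$. Your phrase about orbits being ``mapped to one another freely enough'' does not really capture how this works; the actual bridge is L\"owenheim--Skolem together with $\omega$-categoricity: if $h\colon\mathfrak A\to\mathfrak B$ is a homomorphism between models of $\operatorname{Th}(\Gamma)$ with $\mathfrak A\models\phi(a)$ and $\mathfrak B\not\models\phi(h(a))$, take countable elementary substructures $\mathfrak A'\ni a$ and $\mathfrak B'\supseteq h[\mathfrak A']$; both are isomorphic to $\Gamma$ by $\omega$-categoricity, and composing $h\restriction_{\mathfrak A'}$ with isomorphisms produces an endomorphism of $\Gamma$ violating $R$, a contradiction. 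Finally, your closing remark that both maps ``send intersections to intersections'' is misplaced for an order-reversing map (meets should go to joins); it is also unnecessary, since an inclusion-reversing bijection between complete lattices is automatically a lattice antiisomorphism, so that sentence can simply be dropped.
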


All the closed monoids containing the group of all permutations on a countably infinite set $D$ (which equals the automorphism group of the empty structure $(D;=)$) have been determined in~\cite{BodChenPinsker}, and their number is countably infinite. Therefore, every structure has infinitely many reducts up to existential positive interdefinability. In general, it will be impossible to determine all of them, but sometimes it is already useful to determine certain closed monoids, as in the following theorem about endomorphism monoids of reducts of the random graph from~\cite{RandomMinOps}. We need the following definitions. Since the random graph $G=(V;E)$ contains all countable graphs, it contains an infinite clique. Let $e_E$ be any injective function from $V$ to $V$ whose image induces such a clique in $G$. Similarly, let $e_N$ be any injection from $V$ to $V$ whose image induces an independent set in $G$.

\begin{thm}[Bodirsky and Pinsker~\cite{RandomMinOps}]\label{thm:randomMinimalMonoids}
    Let $\Gamma$ be a reduct of the random graph $G=(V;E)$. Then at least one of the following holds.
    \begin{itemize}
        \item $\End(\Gamma)$ contains a constant operation.
        \item $\End(\Gamma)$ contains $e_E$.
        \item $\End(\Gamma)$ contains $e_N$.
        \item $\Aut(\Gamma)$ is a dense subset of $\End(\Gamma)$ (equipped with the topology of pointwise
        convergence).
    \end{itemize}
\end{thm}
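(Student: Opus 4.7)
The plan is to argue by contraposition: assuming $\End(\Gamma)$ contains no constant operation, no $e_E$, and no $e_N$, I will show that $\Aut(\Gamma)$ is dense in $\End(\Gamma)$. Concretely, given $f \in \End(\Gamma)$ and a finite tuple $a=(a_1,\ldots,a_n) \in V^n$, the task is to find $\alpha \in \Aut(\Gamma)$ with $\alpha(a_i)=f(a_i)$ for each $i$.

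First I would expand $G$ by a generic linear order, obtaining the random ordered graph $(V;E,<)$, which is homogeneous in a finite language and whose age is a Ramsey class by the theorem of Ne\v{s}et\v{r}il-R\"odl. The crucial containment is $\Aut(V;E,<) \subseteq \Aut(G) \subseteq \Aut(\Gamma)$, so pre- and post-composing $f$ with such automorphisms keeps the result inside $\End(\Gamma)$. Using the Ramsey property I would then canonize $f$: by coloring finite configurations of $V \setminus\{a_1,\ldots,a_n\}$ according to the $G$-type of their $f$-image, pre-composing with elements of the pointwise stabilizer $\Aut(V;E,<)_a$, post-composing with elements of $\Aut(G)$, and passing to a limit via a standard compactness/diagonal argument, one extracts a function $g \in \End(\Gamma)$ that agrees with $f$ on $a$ and is \emph{canonical} with respect to $(V;E,<)$, in the sense that the $G$-type of $(g(x_1),\ldots,g(x_k))$ depends only on the type of $(x_1,\ldots,x_k)$ in $(V;E,<)$.

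It then remains to classify the possible canonical behaviors of $g$. A non-injective canonical $g$ is forced by canonicity to be constant, contradicting the standing assumption. An injective canonical $g$ is determined, up to post-composition by $\Aut(G)$, by what it does to the two non-equality ordered $2$-types: it is either a self-embedding of $G$, an instance of $e_E$ (image a clique), an instance of $e_N$ (image an independent set), or a ``complementation'' swapping edges and non-edges. The $e_E$ and $e_N$ cases directly contradict the assumption. If $g$ is a self-embedding of $G$, homogeneity of $G$ yields $\alpha \in \Aut(G) \subseteq \Aut(\Gamma)$ agreeing with $g$, and hence with $f$, on $a$.

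The main obstacle is the complementation case, which I would dispatch as follows. Canonicity of $g$ together with $g\in\End(\Gamma)$ forces every relation $R$ of $\Gamma$ to be invariant under the complementation symmetry $-$ of $G$: indeed, for $(x_1,\ldots,x_k)\in R$, the tuple $(g(x_1),\ldots,g(x_k))$ lies in $R$ and realizes the complement type, so by homogeneity of $G$ there is $\beta\in\Aut(G)\subseteq\Aut(\Gamma)$ mapping it to $(-(x_1),\ldots,-(x_k))$, which therefore lies in $R$. Hence $-\in\Aut(\Gamma)$, and since $g$ can be approximated on $a$ by $-$ composed with a suitable element of $\Aut(G)$ (using homogeneity of $G$ once more), we obtain the desired $\alpha\in\Aut(\Gamma)$ agreeing with $f$ on $a$, completing the proof.
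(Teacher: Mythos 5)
Your overall plan -- contraposition, canonize $f$ over the Ramsey expansion $(V;E,<)$ relative to the finite tuple $a$, classify the canonical behaviors, and dispatch the bijective cases by showing they give automorphisms of $\Gamma$ -- is the right shape and is indeed the approach that the paper's framework (Section 5 and Lemma~\ref{lem:generatesCanonicalWithConstants}) prescribes. However, there is a genuine gap in the canonization step and in the subsequent case analysis.

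You state that the limit function $g$ is \emph{canonical with respect to $(V;E,<)$}, and you then classify the injective canonical behaviors into exactly four cases (self-embedding, $e_E$, $e_N$, complementation). But since you force $g$ to agree with $f$ on $a$ by pre-composing only with the pointwise stabilizer $\Aut(V;E,<)_a$, the most that the Ramsey argument delivers is canonicity of $g$ as a function from the \emph{constant expansion} $(V;E,<,a_1,\ldots,a_n)$ to $(V;E)$ -- this is precisely the content of Lemma~\ref{lem:generatesCanonicalWithConstants}. With constants added, the $2$-types proliferate: a pair $(x,y)$ now carries, besides the $<$-order and the edge relation, its adjacency pattern to each $a_i$, and the target type of $(g(x),g(y))$ may depend on all of that. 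As a consequence, your four-way classification is incomplete. The missing behaviors are exactly the ``switch''-type functions (and hybrids thereof across the $2^n$ regions determined by adjacency to $a$): e.g., a canonical $g$ may flip edges and non-edges precisely between the set of vertices adjacent to $a_1$ and the set of vertices non-adjacent to $a_1$, which is $\sw$, and this is neither a self-embedding, nor $e_E$, nor $e_N$, nor $-$. One even needs to allow $g$ to be, say, complementation on one region and the identity-type on another. Compare Theorem~\ref{thm:randomMinimalUnary}: the minimal closed monoids above $\cl{\Aut(G)}$ are generated by a constant, $e_E$, $e_N$, $-$, \emph{or} $\sw$, and your proof never confronts the $\sw$ case.

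The repair is to (i) replace ``canonical with respect to $(V;E,<)$'' by ``canonical with respect to $(V;E,<,a)$''; (ii) observe that each of the finitely many ``regions'' $R$ (sets of vertices with a fixed adjacency pattern to $a$, ordered and colored by $(E,<)$) is itself a copy of the ordered random graph, so $g\restriction_R$ is canonical for $(V;E,<)$ and falls into your four cases; (iii) if any $g\restriction_R$ is constant, $e_E$-like, or $e_N$-like, post-compose $g$ with a self-embedding of $G$ onto $R$ (which is in $\End(\Gamma)$ because $\End(\Gamma)\supseteq\cl{\Aut(G)}$) to contradict the standing assumption; (iv) otherwise $g$ is an injection that on each region is edge-preserving or edge-flipping, and across regions follows some fixed rule -- i.e., a switch-like permutation up to composition with $\Aut(G)$ -- and you then need an argument (in the spirit of your complementation paragraph, but now for each relevant switch-type behavior) that such a permutation already lies in $\Aut(\Gamma)$, so that an automorphism agreeing with $f$ on $a$ exists. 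Your complementation paragraph is essentially correct and would generalize, but as written the proposal silently assumes the list of injective canonical functions on the constant expansion is the same as on $(V;E,<)$, which is false and is where the essential combinatorial work of the original argument lives.

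Two minor points. First, your claim that a non-injective canonical $g$ must be constant needs the connectedness argument (the random graph and its complement are both connected, so collapsing edge-type pairs or non-edge-type pairs propagates to a global constant); you assert it but don't say why. Second, that argument again must be run per region once constants are present, as in (iii) above.
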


Theorem~\ref{thm:randomMinimalMonoids} states that for reducts $\Gamma$ of the random graph, either $\End(\Gamma)$ contains a function that destroys all structure of the random graph, or it contains basically no functions except the automorphisms. This has the following non-trivial consequence. A theory $T$ is called
\emph{model-complete} iff every embedding between models of $T$ is
elementary, i.e., preserves all first-order formulas. A structure is said to
be model-complete iff its first-order theory is model-complete.

\begin{cor}[Bodirsky and Pinsker~\cite{RandomMinOps}]\label{cor:mc}
    All reducts of the random graph are model-complete.
\end{cor}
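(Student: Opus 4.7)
The plan is to apply the standard characterization: an $\omega$-categorical structure $\Gamma$ is model-complete if and only if $\Aut(\Gamma)$ is dense in the self-embedding monoid $\mathrm{Emb}(\Gamma)$ in the topology of pointwise convergence. Since all reducts of the random graph $G=(V;E)$ are $\omega$-categorical, this criterion applies.

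My first move would be to avoid applying Theorem~\ref{thm:randomMinimalMonoids} directly to $\Gamma$, because the witness functions in cases~(1)--(3) a priori only sit in $\End(\Gamma)$ and are not constrained to be embeddings. Instead, I would apply it to an auxiliary reduct $\Gamma^{\ast}$ whose endomorphism monoid is exactly $\mathrm{Emb}(\Gamma)$. Since $\mathrm{Emb}(\Gamma)$ is a closed submonoid of $V^V$ containing $\Aut(G)$, Theorem~\ref{thm:monoids-expos} (applied with $\Delta=G$) furnishes such a $\Gamma^{\ast}$; explicitly $\Gamma^{\ast}:=\Inv(\mathrm{Emb}(\Gamma))$, and the Galois correspondence gives $\End(\Gamma^{\ast})=\mathrm{Emb}(\Gamma)$.

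I would then invoke Theorem~\ref{thm:randomMinimalMonoids} on $\Gamma^{\ast}$. Case~(1) is immediately impossible, since embeddings are injective while constants are not. In case~(4), $\Aut(\Gamma^{\ast})$ is dense in $\End(\Gamma^{\ast})=\mathrm{Emb}(\Gamma)$; using the standard fact that a bijective endomorphism of an $\omega$-categorical structure is automatically an automorphism, I would identify $\Aut(\Gamma^{\ast})=\mathrm{Emb}(\Gamma)\cap\mathrm{Sym}(V)=\Aut(\Gamma)$, which is exactly the density sought. In cases~(2) and~(3), $e_E$ (respectively $e_N$) lies in $\mathrm{Emb}(\Gamma)$; here the plan is to argue that this forces every relation of $\Gamma$ to depend only on the equality pattern of its arguments, and hence $\Gamma$ to be first-order interdefinable with $(V;=)$, for which model-completeness is trivial: every injection preserves equality, and the pointwise closure of $\mathrm{Sym}(V)=\Aut((V;=))$ is precisely the monoid of all injections, which equals $\mathrm{Emb}((V;=))$.

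The core combinatorial step, and expected main obstacle, is the verification in cases~(2)--(3) that $e_E\in\mathrm{Emb}(\Gamma)$ collapses the relations of $\Gamma$ to equality-definable ones. The intended argument: given a relation $R$ of $\Gamma$ and two tuples $\bar a,\bar b\in V^n$ with the same pattern of equalities, the images $e_E(\bar a)$ and $e_E(\bar b)$ lie in a common $\Aut(G)$-orbit by homogeneity of $G$ (both are tuples on an infinite clique sharing the same equality pattern), so by $\Aut(G)$-invariance of $R$ (which holds because $R$ is first-order definable in $G$) they are simultaneously in or out of $R$; combining with $R(\bar a)\Leftrightarrow R(e_E(\bar a))$, which holds precisely because $e_E$ is an embedding of $\Gamma$, yields $R(\bar a)\Leftrightarrow R(\bar b)$, so $R$ depends only on the equality pattern of its argument, as claimed. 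The case of $e_N$ is symmetric.
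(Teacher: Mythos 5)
Your proof is correct and follows essentially the same route as the paper's: apply the unary minimal-monoid dichotomy to the self-embedding monoid of $\Gamma$ realized as $\End(\Gamma')$ for an auxiliary reduct $\Gamma'$, dismiss the constant case by injectivity, handle case (4) by identifying $\Aut(\Gamma')=\Aut(\Gamma)$, and in the $e_E/e_N$ cases deduce that all relations of $\Gamma$ are pure equality-pattern relations --- the paper does this by rewriting the quantifier-free defining formula with $E$ replaced by \emph{false}, while your orbit argument via homogeneity of $G$ is an equivalent phrasing of the same observation. One small slip worth flagging: the fact you invoke --- that a bijective endomorphism of an $\omega$-categorical structure is automatically an automorphism --- is false in general (take $(D;P)$ with $P$ an infinite coinfinite unary predicate; a bijection mapping $P$ into a proper subset of itself is a bijective endomorphism but not an automorphism); what you actually need and use is the weaker fact that a bijective \emph{self-embedding} is an automorphism, which is immediate since the inverse of a bijection preserving both $R$ and $\neg R$ preserves them too.
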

\begin{proof}[Proof]

    It is not hard to see (cf.~\cite{RandomMinOps}) that an $\omega$-categorical structure $\Gamma$ is model-complete if and only if $\Aut(\Gamma)$ is dense in the monoid of self-embeddings of $\Gamma$.
 Now let $\Gamma$ be a reduct of $G$, and let $\M$ be the closed monoid of self-embeddings of $\Gamma$; we will show that $\Aut(\Gamma)$ is dense in $\M$.
    We apply Theorem~\ref{thm:randomMinimalMonoids} to $\M$ (which, as a closed monoid containing $\Aut(G)$, is also an endomorphism monoid of a reduct $\Gamma'$ of $G$). Clearly, $\Gamma'$ and $\Gamma$ have the same automorphisms, namely those permutations in $\M$ whose inverse is also in $\M$. Therefore we are done if the last case of the
    theorem holds. Note that $\M$ cannot contain a constant operation as all its operations are injective. So suppose
    that $\M$ contains $e_N$ -- the argument for $e_E$ is analogous. Let $R$ be any relation of $\Gamma$, and $\phi_R$ be its defining quantifier-free formula; $\phi_R$ exists since $G$ has quantifier-elimination, i.e., every first-order formula over $G$ is equivalent to a quantifier-free formula. Let $\psi_R$ be the formula obtained by replacing all occurrences of $E$ by \emph{false}; so $\psi_R$ is a formula over the empty language. Then a tuple $a$ satisfies $\phi_R$ in $G$ iff $e_N(a)$ satisfies $\phi_R$ in $G$ (because $e_N$ is an embedding) iff $e_N(a)$ satisfies $\psi_R$ in $G$ (as there are no edges on $e_N(a)$) iff $e_N(a)$ satisfies $\psi_R$ in the substructure induced by $e_N[V]$ (since $\psi_R$ does not contain any quantifiers). Thus, $\Gamma$ is isomorphic to the structure on $e_N[V]$ which has the relations defined by the formulas $\psi_R$; hence, $\Gamma$ is isomorphic to a structure with a first-order definition over the empty language. This structure has, of course, all injections as self-embeddings, and all permutations as automorphisms, and hence is model-complete; thus, the same is true for $\Gamma$.
\end{proof}

It follows from~\cite[Proposition 19]{tcsps-journal} that all reducts
of the linear order of the rationals $(\mathbb Q;<)$ are
model-complete as well. This is remarkable, since similar structures do not have this property -- for example, $(\mathbb Q;<,0)$ is first-order interdefinable with the structure $(\mathbb Q;<,[0,\infty))$ which is not model-complete.

We now turn to an even finer way of distinguishing reducts of an $\omega$-categorical structure, namely up to \emph{primitive positive interdefinability}. This is of importance in connection with the constraint satisfaction problem from the introduction, as we will describe in more detail in Section~\ref{sect:csp}. We call a formula \emph{primitive positive} iff it is existential positive and does not contain disjunctions. A \emph{clone} on domain $D$ is a set of finitary operations on $D$ which contains all projections (i.e., functions of the form $(x_1,\ldots,x_n)\mapsto x_i$) and which is closed under composition. A clone $\C$ is \emph{closed} (also called \emph{locally closed} or \emph{local} in the literature) iff for each $n\geq 1$, the set of $n$-ary functions in $\C$ is a closed subset of the space $D^{D^n}$, where $D$ is taken to be discrete. The closed clones on $D$ form a complete lattice with respect to inclusion -- the structure of this lattice has been studied in the universal algebra literature (see~\cite{GoldsternPinsker}, \cite{Pin-morelocal}). Similarly, the set of relational structures with domain $D$ which are \emph{primitive positive closed}, i.e., which contain all relations which they define by primitive positive formulas, forms a complete lattice. For a structure $\Gamma$, we define $\Pol(\Gamma)$ to consist of all finitary operations on the domain of $\Gamma$ which preserve all relations of $\Gamma$, i.e., an $n$-ary function $f$ is an element of $\Pol(\Gamma)$ iff for all relations $R$ of $\Gamma$ and all tuples $r_1,\ldots,r_n\in R$ the tuple $f(r_1,\ldots,r_n)$ is an element of $R$. It is easy to see that $\Pol(\Gamma)$ is always a closed clone. Observe also that $\Pol(\Gamma)$ is a generalization of $\End(\Gamma)$ to higher (finite) arities.

\begin{thm}[Bodirsky and Ne\v{s}et\v{r}il~\cite{BodirskyNesetrilJLC}]\label{thm:clones-pp}
    Let $\Delta$ be $\omega$-categorical. Then the mapping $\Gamma\mapsto \Pol(\Gamma)$ is an antiisomorphism between the lattice of primitive positive closed reducts of $\Delta$ and the lattice of closed clones containing $\Aut(\Delta)$. The inverse mapping is given by $\C\mapsto\Inv(\C)$.
\end{thm}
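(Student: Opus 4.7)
The plan is to treat this as the $\omega$-categorical counterpart of the finite-universe Inv--Pol Galois correspondence of Bodnarchuk--Kaluzhnin--Kotov--Romov and Geiger, splitting the argument into three parts: well-definedness of the two maps, antitony together with the trivial containments $\Gamma \subseteq \Inv(\Pol(\Gamma))$ and $\C \subseteq \Pol(\Inv(\C))$, and finally the substantive reverse inclusions. For the first two parts I would check by direct inspection that $\Pol(\Gamma)$ is a closed clone containing $\Aut(\Delta) \subseteq \Aut(\Gamma)$, and that for any closed clone $\C \supseteq \Aut(\Delta)$ each $R \in \Inv(\C)$ is in particular invariant under $\Aut(\Delta)$, hence first-order definable in $\Delta$ by Theorem~\ref{thm:groups-fo}, so $\Inv(\C)$ is indeed a reduct of $\Delta$; pp-closedness of $\Inv(\C)$ follows because invariance under operations is preserved by conjunction and existential quantification. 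Antitony of both maps and the trivial containments are then immediate.

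The main work is to show $\Inv(\Pol(\Gamma)) \subseteq \Gamma$ for every pp-closed reduct $\Gamma$ of $\Delta$ (the dual direction $\Pol(\Inv(\C)) \subseteq \C$ is analogous). Given an $n$-ary $R$ preserved by $\Pol(\Gamma)$, I would define $R^\sharp$ as the intersection of all $n$-ary pp-definable-in-$\Gamma$ superrelations of $R$. Since $\Gamma$ inherits $\omega$-categoricity from $\Delta$, Ryll-Nardzewski bounds the number of $n$-ary first-order definable (hence pp-definable) relations, so $R^\sharp$ is itself pp-definable as a finite conjunction of pp-formulas, lies in $\Gamma$ by pp-closedness, and trivially contains $R$. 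The real task is $R^\sharp \subseteq R$.

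I would argue this by contradiction. Suppose $\bar b \in R^\sharp \setminus R$. Since only finitely many pp-definable $n$-ary relations $P$ fail to contain $\bar b$, and for each such $P$ we have $R \not\subseteq P$ (otherwise $\bar b \in R^\sharp \subseteq P$), I can pick for each such $P$ a witness $\bar a_P \in R \setminus P$, obtaining a finite $R_0 = \{\bar a_1, \dots, \bar a_k\} \subseteq R$ whose smallest pp-definable superrelation still contains $\bar b$. Forming the ``row'' tuples $\bar r_i := (\bar a_1(i), \dots, \bar a_k(i)) \in D^k$ for $i = 1, \dots, n$, this containment is precisely the statement that the pp-type of $(\bar r_1, \dots, \bar r_n)$ in the direct power $\Gamma^k$ is contained in the pp-type of $\bar b$ in $\Gamma$. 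A standard back-and-forth powered by $\omega$-categoricity of $\Gamma$ (and the consequent $\omega$-saturation) then extends the partial assignment $\bar r_i \mapsto b_i$ to a full homomorphism $f \colon \Gamma^k \to \Gamma$, that is, to a $k$-ary polymorphism of $\Gamma$: at each extension step, finite satisfiability of the required pp-type over the already-chosen image parameters follows from the pp-type containment applied to single existentially quantified pp-formulas, and saturation provides the realiser. Evaluating $f$ componentwise on $\bar a_1, \dots, \bar a_k$ yields $\bar b$, and since $R$ is invariant under $\Pol(\Gamma)$ we conclude $\bar b \in R$, a contradiction.

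I expect this compactness/back-and-forth step to be the main obstacle: it is the only place where $\omega$-categoricity does real work, combining finiteness of pp-types (to make $R^\sharp$ pp-definable and to reduce to finite $R_0$) with $\omega$-saturation (to realise, at each extension step, the pp-type prescribed by the partial map). The dual inclusion $\Pol(\Inv(\C)) \subseteq \C$ would be handled by applying essentially the same argument to the graph of an arbitrary $g \in \Pol(\Inv(\C))$: one approximates $g$ on each finite restriction of its domain by elements of $\C$, then uses closedness of $\C$ to pass to the pointwise limit. All remaining lattice-theoretic bookkeeping transfers unchanged from the finite Geiger--BKKR setup.
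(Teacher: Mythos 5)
Your argument is correct and follows the standard Geiger-style Galois correspondence argument in the $\omega$-categorical setting; the paper itself states Theorem~\ref{thm:clones-pp} with a citation to Bodirsky and Ne\v{s}et\v{r}il and gives no proof, and that cited proof proceeds exactly as you do (finitely many pp-definable $n$-ary relations via Ryll-Nardzewski, the ``row trick'' turning a finite subset of $R$ into a partial map on $\Gamma^k$, and extension to a polymorphism). Two small remarks: the extension of $\bar r_i \mapsto b_i$ to a total map $\Gamma^k \to \Gamma$ is a pure \emph{forth} construction, not a back-and-forth -- you only need a homomorphism, and since $D^k$ is countable a step-by-step enumeration maintaining the pp-type containment suffices, with no genuine appeal to $\omega$-saturation once Ryll-Nardzewski has collapsed the relevant pp-type to a single formula. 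For the dual inclusion $\Pol(\Inv(\C)) \subseteq \C$, the clean way to make your ``finite approximation then local closure'' precise is not to reuse the forth argument on the graph of $g$, but to observe that for any finite $\{\bar s_1,\dots,\bar s_m\} \subseteq D^k$ the orbit relation $S = \{(h(\bar s_1),\dots,h(\bar s_m)) : h \in \C,\ h \text{ $k$-ary}\}$ lies in $\Inv(\C)$, contains the row tuples via the projections, and hence is preserved by $g$, giving $(g(\bar s_1),\dots,g(\bar s_m)) \in S$; this needs no $\omega$-categoricity at all, only local closure of $\C$.
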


It turns out that even for the empty structure $(X;=)$, the lattice of primitive positive closed reducts is probably too complicated to be completely described -- the lattice has been thoroughly investigated in~\cite{BodChenPinsker}.

\begin{thm}[Bodirsky, Chen, Pinsker~\cite{BodChenPinsker}]
    The structure $(X;=)$ (where $X$ is countably infinite), and therefore all countably infinite structures, have $2^{\aleph_0}$ reducts up to primitive positive interdefinability.
\end{thm}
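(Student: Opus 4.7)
The plan is to invoke Theorem~\ref{thm:clones-pp}: pp-closed reducts of $(X;=)$ are in antiisomorphism with the closed clones on $X$ containing $\Aut((X;=))$, the group of all permutations of $X$, so it suffices to show that the lattice of closed clones above $\Aut((X;=))$ has cardinality $2^{\aleph_0}$. The upper bound is immediate, since any clone is a subset of $\bigsqcup_{n\geq 1} X^{X^n}$, which has cardinality $2^{\aleph_0}$. For the lower bound, I would construct an infinite family of $\Aut((X;=))$-invariant relations $\{R_i:i\in\mathbb{N}\}$ on $X$ together with finitary operations $\{f_i:i\in\mathbb{N}\}$ such that $f_i$ preserves $R_j$ for every $j\neq i$ but $f_i$ fails to preserve $R_i$. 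For each subset $S\subseteq\mathbb{N}$ one then forms the structure $\Delta_S:=(X;\{R_j:j\in S\})$, and observes that $f_i\in\Pol(\Delta_S)$ if and only if $i\notin S$. Consequently the polymorphism clones $\Pol(\Delta_S)$ for varying $S$ are pairwise distinct closed clones above $\Aut((X;=))$, yielding $2^{\aleph_0}$ pairwise pp-inequivalent reducts of $(X;=)$ via Theorem~\ref{thm:clones-pp}.

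The heart of the argument is the construction of such an independent family. An $\Aut((X;=))$-invariant $n$-ary relation is a union of orbits of the diagonal action on $X^n$, equivalently a set of ``tuple shapes'' (i.e.\ equivalence relations on $\{1,\ldots,n\}$). I would accordingly choose each $R_i$, of appropriately growing arity, to isolate a specific tuple shape that can be destroyed selectively, and each $f_i$ to act as a projection on the shapes constrained by the $R_j$ with $j\neq i$ while perturbing the shape that defines $R_i$. The hard part will be the combinatorial verification that each candidate $f_i$ really preserves every $R_j$ with $j\neq i$ while really violating $R_i$; but once this is in hand, the non-pp-definability of $R_i$ from $\{R_j:j\neq i\}$ follows automatically, since pp-definability is preserved under polymorphisms, so no separate syntactic argument is needed.

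Finally, the extension from $(X;=)$ to an arbitrary countably infinite structure $\Delta$ on $X$ is straightforward: every reduct of $(X;=)$ is also a reduct of $\Delta$ (equality being first-order available in any structure), and pp-interdefinability is an intrinsic property of a pair of structures, so the $2^{\aleph_0}$ pairwise pp-inequivalent reducts constructed for $(X;=)$ yield $2^{\aleph_0}$ pairwise pp-inequivalent reducts of $\Delta$. The matching upper bound $\leq 2^{\aleph_0}$ holds because there are at most $2^{\aleph_0}$ relations on the countable set $X$.
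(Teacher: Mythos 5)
Your overall strategy is sound and is indeed the approach of the cited paper~\cite{BodChenPinsker}: reduce via Theorem~\ref{thm:clones-pp} to counting closed clones above $\Aut((X;=))$, construct an independent family of $\Aut((X;=))$-invariant relations $R_i$ together with operations $f_i$ so that $f_i$ preserves $R_j$ exactly when $j\neq i$, and conclude that the $2^{\aleph_0}$ structures $\Delta_S=(X;\{R_j : j\in S\})$ have pairwise distinct polymorphism clones and hence are pairwise non-pp-interdefinable. The transfer to arbitrary countable structures using the fact that equality is first-order definable in every structure is also correct.

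There are two problems, one minor and one central. The minor one is the upper bound: knowing that $\bigsqcup_n X^{X^n}$ has cardinality $2^{\aleph_0}$, or that there are $2^{\aleph_0}$ relations on $X$, only bounds the number of \emph{subsets} by $2^{2^{\aleph_0}}$, not by $2^{\aleph_0}$. What one should say instead is either that closed clones are closed subsets of the second-countable spaces $X^{X^n}$ and hence determined by countably many data, or, more directly via Theorem~\ref{thm:clones-pp}, that a pp-closed reduct of $(X;=)$ consists of $\Aut((X;=))$-invariant relations, of which there are only \emph{countably} many (finitely many equality types in each arity); this gives the bound $\leq 2^{\aleph_0}$. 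The central problem is that the construction of the independent family $\{(R_i,f_i)\}$ --- which you flag as ``the hard part'' --- is precisely the substance of the theorem and is not carried out. Your description (each $R_i$ isolates an equality type; $f_i$ perturbs that type while acting as a projection on the others) has roughly the right shape, but it is far from obvious, and requires real work in~\cite{BodChenPinsker}, that a single finitary operation $f_i$ can selectively destroy one equality pattern of one arity while simultaneously respecting the equality patterns constrained by all the other $R_j$, whose arities are unbounded. As written, the proposal supplies the correct framework around the proof but leaves a gap exactly where the content of the theorem lies.
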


Fortunately, it is sometimes sufficient in applications to understand only parts of this lattice. We will see examples of this in Section~\ref{sect:csp}. 

\section{Ramsey Classes}\label{sect:ramseyclasses}

While Theorems~\ref{thm:groups-fo},~\ref{thm:monoids-expos} and~\ref{thm:clones-pp} provide a theoretical method for determining reducts of an $\omega$-categorical structure $\Delta$ by transforming them into sets of functions on $\Delta$, understanding these infinite objects could turn out difficult without further tools for handling them. We will now focus on structures which have the additional property that they are reducts of an \emph{ordered Ramsey structure} that is homogeneous in a finite relational signature; such structures are $\omega$-categorical since homogeneous structures in a finite language are $\omega$-categorical and since reducts of $\omega$-categorical structures are $\omega$-categorical. 
This is less restrictive than it might appear at first sight: 
we remark that it could be the case that all homogeneous structures
with a finite relational signature are reducts of ordered homogeneous Ramsey structures with
a finite relational signature (that is, we do not know of a counterexample). 
It turns out that in this context, certain infinite functions can be represented by finite ones, making classification projects more feasible.

\begin{defn}\label{defn:orderd}
    A structure is called \emph{ordered} iff it has a total order among its relations.
\end{defn}

\begin{definition}
    Let $\tau$ a relational signature. For $\tau$-structures $\S,\H,\P$ and an integer $k \geq 1$, we write $\S
    \rightarrow (\H)^\P_k$ iff for every $k$-coloring $\chi$ of the
    copies of $\P$ in $\S$ there exists a copy $\H'$ of $\H$ in $\S$
    such that all copies of $\P$ in $\H'$ have the same color under
    $\chi$.
\end{definition}

\begin{definition}
    A class $\C$ of finite $\tau$-structures which is closed under isomorphisms, induced substructures, and with the joint embedding property (see~\cite{HodgesLong}) is called a \emph{Ramsey class} iff it is closed under substructures and for all $k\geq 1$ and all $\H, \P\in \C$ there exists $\S$ in $\C$ such that $\S    \rightarrow (\H)^\P_k$.
\end{definition}

\begin{definition}
    A relational structure is called \emph{Ramsey} iff its \emph{age}, i.e., the set of finite structures isomorphic to a finite induced substructure, is a Ramsey class.
\end{definition}

Examples of Ramsey structures are the dense linear order $(\mathbb{Q};<)$ and the \emph{ordered random graph} $(V;E,<)$, i.e., 
the Fra\"{i}ss\'{e} limit of the class of finite ordered graphs. We remark that the random graph itself is not Ramsey, but since it is a reduct of the ordered random graph, the methods we are about to expose apply as well. 

We will now see that one can find regular patterns in the behavior of any function acting on an ordered Ramsey structure which is $\omega$-categorical.

\begin{definition}
    Let $\Gamma$ be a structure. The \emph{type} $\tp(a)$ of an $n$-tuple $a \in \Gamma$ is the set of first-order formulas with
     free variables $x_1,\dots,x_n$ that hold for $a$ in $\Gamma$.
\end{definition}

We recall the classical theorem of Ryll-Nardzewski about the number of types in $\omega$-categorical structures.

\begin{thm}[Ryll-Nardzewski]\label{thm:RN}
	The following are equivalent for a countable structure $\Gamma$ in a countable language.
\begin{itemize}
	\item $\Gamma$ is $\omega$-categorical, i.e., any countable model of the theory of $\Gamma$ is isomorphic to $\Gamma$.
	\item $\Gamma$ has for all $n\geq 1$ only finitely many different types of $n$-tuples. 
\end{itemize}
\end{thm}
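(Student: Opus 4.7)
The plan is to prove the two implications separately, combining standard tools from model theory. For the direction from $\omega$-categoricity to finitely many types, I would argue by contrapositive: assume there are infinitely many types of $n$-tuples for some $n$, and construct two non-isomorphic countable models of the theory of $\Gamma$. The key observation is that the type space $S_n$ of complete $n$-types is a Stone space, hence compact, Hausdorff, and totally disconnected. An infinite Stone space cannot be discrete, so there exists a non-isolated type $p\in S_n$, i.e., one not generated by any single formula. By the omitting types theorem (a classical result I would simply cite), there is a countable model $N$ of the theory that omits $p$. On the other hand $p$ is consistent with the theory, so by downward L\"owenheim--Skolem there is a countable model $M$ realizing $p$. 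Then $M$ and $N$ are non-isomorphic countable models, contradicting $\omega$-categoricity.

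For the converse, I would first observe that when there are only finitely many $n$-types $p_1,\ldots,p_k$, each $p_i$ is \emph{isolated} by a single formula: pick for each $j\neq i$ a formula $\phi_{ij}\in p_i\setminus p_j$, and then $\bigwedge_{j\neq i}\phi_{ij}$ isolates $p_i$. With this in hand, a back-and-forth argument shows that any two countable models $M$ and $N$ of the theory of $\Gamma$ are isomorphic. One enumerates both models and iteratively extends a finite partial map $(a_1,\ldots,a_n)\mapsto(b_1,\ldots,b_n)$ between tuples realizing the same type. When a new element $a\in M$ must be matched, let $p'$ be the type of $(a_1,\ldots,a_n,a)$ in $M$ and let $\phi'$ be a formula isolating $p'$; since $(b_1,\ldots,b_n)$ has the same type as $(a_1,\ldots,a_n)$, the sentence $\exists y\,\phi'(b_1,\ldots,b_n,y)$ holds in $N$, and any witness $b$ extends the partial isomorphism. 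The other direction of the back-and-forth is symmetric, and after $\omega$ stages one obtains an isomorphism $M\to N$.

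The conceptually nontrivial ingredient is the omitting types theorem used in the first direction; I would just invoke it from a standard reference such as Hodges. Beyond that, the argument is routine: the Stone-space observation is short, the isolation argument by distinguishing finitely many types is combinatorial, and the back-and-forth construction is a standard workhorse in the model theory of $\omega$-categorical structures. The only real care needed is to verify at each back-and-forth step that the new element can indeed be chosen; this is where isolation is crucial, because it guarantees that the isolating formula is \emph{realized} in every model of the theory, not just consistent with it.
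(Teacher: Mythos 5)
The paper does not include a proof of Ryll--Nardzewski; it states the theorem as a classical result (citing Hodges). Your proof is the standard textbook argument---Stone-space compactness plus omitting types for the forward direction, isolation of types plus back-and-forth for the converse---and it is correct. The only point worth being explicit about: the theorem as stated in the paper counts types \emph{realized} in $\Gamma$, whereas your isolation argument needs that your finite list $p_1,\dots,p_k$ exhausts all types \emph{consistent} with $\mathrm{Th}(\Gamma)$. These coincide, but this requires a short argument: if $q$ were a consistent $n$-type not among the $p_i$, then picking $\theta_i \in q \setminus p_i$ for each $i$ gives a formula $\bigwedge_i \theta_i$ that is consistent with the theory yet satisfied by no tuple of $\Gamma$, contradicting $\Gamma \models \neg\exists\bar{x}\,\bigwedge_i\theta_i$. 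With that one-line bridge in place, your isolation step (and hence the back-and-forth) goes through exactly as you describe.
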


We also mention that moreover, as a well-known consequence of the proof of this theorem, two tuples in a countable $\omega$-categorical structure have the same type if and only if there is an automorphism of $\Gamma$ which sends one tuple to the other.

\begin{definition}
    A \emph{type condition} between two structures $\Gamma_1,\Gamma_2$ is a pair $(t_1,t_2)$, where each $t_i$ is a type of an $n$-tuple in $\Gamma_i$. A function $f:\Gamma_1\To \Gamma_2$ \emph{satisfies} a type condition $(t_1,t_2)$ if for all $n$-tuples $(a_1,\ldots,a_n)$ of type $t_1$, the $n$-tuple $(f(a_1),\ldots,f(a_n))$ is of type $t_2$.

    A \emph{behavior} is a set of type conditions between two structures. A function \emph{has behavior $B$} if it satisfies all the type conditions of the behavior $B$. A behavior $B$ is called \emph{complete} iff for all types $t_1$ of tuples in $\Gamma_1$ there is a type $t_2$ of a tuple in $\Gamma_2$ such that $(t_1,t_2)\in B$.

    A function $f: \Gamma_1 \To \Gamma_2$ is \emph{canonical} iff it has a complete behavior. If $F\subseteq \Gamma_1$, then we say that $f$ is \emph{canonical on $F$} if its restriction to $F$ is canonical.
\end{definition}

Observe that the function $\leftrightarrow$ of Theorem~\ref{thm:cameron5} is canonical for the structure $(\mathbb{Q};<)$. The function $\circlearrowright$ is not, but it is canonical on each of the intervals $(-\infty,\alpha)$ and $(\alpha,\infty)$. For the random graph, the function $-$ of Theorem~\ref{thm:thomas5} is canonical, while $\sw$ is canonical on $V\setminus\{0\}$. Also, $\sw$ is canonical as a function from $(V;E,0)$ to $(V;E)$, where $(V;E,0)$ denotes the structure obtained from $(V;E)$ by adding a new constant symbol for the element $0$ by which we defined the function $\sw$. Moreover, the constant function and $e_E, e_N$ of Theorem~\ref{thm:randomMinimalMonoids} are canonical on $(V;E)$. We will now show that it is no coincidence that canonical functions are that ubiquitous.

\begin{defn}
    Let $\Delta$ be a structure. A property $P$ \emph{holds for arbitrarily large finite substructures of $\Delta$} iff for all finite substructures $F\subseteq \Delta$ there is a copy of $F$ in $\Delta$ for which $P$ holds.
\end{defn}

The following observation is just an easy application of the definition of a Ramsey class, but crucial in understanding functions on ordered Ramsey structures.

\begin{lem}\label{lem:canonicalOnArbitrarilyLargeFinite}
    Let $\Delta$ be ordered Ramsey and $\omega$-categorical, and let $f: \Delta\To \Delta$. Then $f$ is canonical on arbitrarily large finite substructures.
\end{lem}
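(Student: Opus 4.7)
The plan is, for an arbitrary finite substructure $F \subseteq \Delta$, to produce a copy $F' \cong F$ inside $\Delta$ on which $f$ has a complete behavior, by iterating the Ramsey property once for each isomorphism type of substructure appearing in $F$. First I would enumerate the finitely many isomorphism types $P_1,\ldots,P_r$ of ordered substructures of $F$ and, applying Theorem~\ref{thm:RN} to the $\omega$-categorical structure $\Delta$, set $k_i$ to be the finite number of types realized by $|P_i|$-tuples in $\Delta$.

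Next I would build a sequence of structures in the age of $\Delta$ by setting $T_0 := F$ and choosing, using that $\Delta$ is Ramsey, some $T_i$ in the age with $T_i \to (T_{i-1})^{P_i}_{k_i}$ for each $i = 1,\ldots,r$. Embedding $T_r$ into $\Delta$ and coloring each copy of $P_r$ in it by the type in $\Delta$ of the image under $f$ of its unique increasing enumeration (at most $k_r$ colors), the choice of $T_r$ yields a subcopy $T_{r-1}^{*} \cong T_{r-1}$ monochromatic for $P_r$. Repeating the same procedure inside $T_{r-1}^{*}$ for copies of $P_{r-1}$, then inside the resulting copy for $P_{r-2}$, and so on, after $r$ steps gives a copy $F' \cong F$ in $\Delta$ such that, for every $i$, all copies of $P_i$ in $F'$ are mapped by $f$ to tuples of a single type in $\Delta$; monochromaticity for the patterns already treated is inherited on passing to substructures, so the conditions accumulate correctly.

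From this I would deduce canonicity as follows. Given $\bar a, \bar b$ in $F'$ of the same type in $\Delta$, enumerate their distinct entries in the order of $\Delta$ to obtain injective increasing tuples $\bar a', \bar b'$, both listing copies of the same $P_i$; by the previous step, $f(\bar a')$ and $f(\bar b')$ have the same type, so there is $\alpha \in \Aut(\Delta)$ mapping one to the other. Since $\bar a$ and $\bar b$ have the same type and $<$ is among the relations of $\Delta$, the tuple $\bar a$ is obtained from $\bar a'$ by exactly the same pattern of repetition and reordering as $\bar b$ is from $\bar b'$, so the same $\alpha$ maps $f(\bar a)$ to $f(\bar b)$; hence $f|_{F'}$ is canonical.

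The only delicate point in this argument will be the translation step: checking that canonicity on all tuples really does follow from monochromaticity on the increasing injective enumerations of each pattern $P_i$. This reduces to the observation that, in an ordered $\omega$-categorical structure, the type of a tuple is determined by the isomorphism type of the ordered substructure it spans together with a combinatorial map telling which coordinate realizes which element of that substructure. The iterated Ramsey extraction itself is a textbook application of the definition of a Ramsey class.
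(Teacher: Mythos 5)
Your proof is correct and follows essentially the same strategy as the paper's sketch: use Ryll-Nardzewski to obtain a finite coloring of tuples by the type of their $f$-image, iterate the Ramsey property once per relevant pattern, and exploit the ordering to pass between colorings of induced substructures and colorings of tuples. Your explicit reduction of arbitrary tuples to their increasing injective enumerations is precisely the detail the paper compresses into its remark that, in ordered structures, coloring substructures and coloring tuples are the same thing.
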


The proof goes along the following lines: Let $F$ be any finite substructure of $\Delta$. Then the function $f$ induces a mapping from the tuples in $\Delta$ to the set of types in $\Delta$ (each tuple is sent to the type of its image under $f$). If we restrict this mapping to tuples of length at most the size of $F$, then since $\Delta$ is $\omega$-categorical, the range of this restriction is finite by Theorem~\ref{thm:RN}, and thus is a $k$-coloring of tuples for some finite $k$.  Now apply the Ramsey property once for every type of tuple that occurs in $F$ -- see~\cite{BodPinTsa} for details. We remark that this lemma would be false if one dropped the order assumption, which implies that coloring induced substructures and coloring tuples in $\Delta$ are one and the same thing.

The motivation for working with ordered Ramsey structures is the rough idea that all ``important'' functions can be assumed to be canonical. While this is simply false when stated boldly like this, it is still true for some functions when the idea is further refined, as we will show in the following. Observe that if $\Delta$ is $\omega$-categorical, then for each $n\geq 1$ there are only finitely many possible type conditions for $n$-types over $\Delta$ (Theorem~\ref{thm:RN}). Suppose that $\Delta$ has in addition a finite language and \emph{quantifier elimination}, 
i.e., every first-order
formula in the language of $\Delta$ is equivalent to a quantifier-free formula over $\Delta$; this follows in particular from homogeneity in a finite language. Then, if $n(\Delta)$ is the largest arity of its relations, then a function $f:\Delta\To\Delta$ is canonical iff for every type $t_1$ of an $n(\Delta)$-tuple in $\Delta$ there is a type $t_2$ in $\Delta$ such that $f$ satisfies the type condition $(t_1,t_2)$. In other words, the complete behavior of $f$ is already determined by its behavior on $n(\Delta)$-types. Hence, a canonical function on $\Delta$ is essentially a function on the $n(\Delta)$-types of $\Delta$ -- a finite object.

\begin{defn}
    Let $f,g: \Delta\To \Delta$. We say that $f$ \emph{generates} $g$ over $\Delta$ iff $g$ is contained in the smallest closed monoid containing $f$ and $\Aut(\Delta)$. Equivalently, for every finite subset $F$ of $\Delta$, there exists a term $\beta\circ f\circ\alpha_1\circ f\circ\alpha_2\circ\cdots\circ f\circ \alpha_n$, where $\beta,\alpha_i\in\Aut(\Delta)$, which agrees with $g$ on $F$.
\end{defn}

\begin{prop}\label{lem:generatesCanonical}
    Let $\Delta$ be a structure in a finite language which is ordered, Ramsey, and homogeneous. Let $f: \Delta\To \Delta$. Then $f$ generates a canonical function $g:\Delta\To\Delta$.
\end{prop}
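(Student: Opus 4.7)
The plan is to apply Lemma~\ref{lem:canonicalOnArbitrarilyLargeFinite} to obtain a sequence of ``almost-canonical'' translates of $f$, and then combine a pigeonhole diagonalization with homogeneity to extract a pointwise-convergent subsequence whose limit is both canonical and in the closed monoid generated by $f$ and $\Aut(\Delta)$.

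First I would fix an enumeration $\Delta=\{a_1,a_2,\ldots\}$ and set $F_n := \{a_1,\ldots,a_n\}$. Lemma~\ref{lem:canonicalOnArbitrarilyLargeFinite} yields, for each $n$, a copy $F_n^\ast$ of $F_n$ in $\Delta$ on which $f$ is canonical; by homogeneity there exists $\alpha_n\in\Aut(\Delta)$ with $\alpha_n(F_n)=F_n^\ast$, and so $g_n := f\circ \alpha_n$ is canonical on $F_n$. Since $\Delta$ has a finite relational signature and is $\omega$-categorical (Theorem~\ref{thm:RN}), there are only finitely many types of any fixed arity, and since homogeneity gives quantifier elimination, a canonical behavior is determined by its action on types of length $n(\Delta)$. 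Hence the set of possible canonical behaviors is finite. A standard diagonal pigeonhole argument along $m=1,2,\ldots$ then lets me pass to a subsequence of $(g_n)$ along which the behaviors on tuples from $F_m$ stabilize for every $m$: for each fixed $m$ and all $n\ge m$, the map $g_n$ displays the same behavior on tuples from $F_m$ as $g_m$ does.

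The next step upgrades this behavioral agreement to pointwise agreement using homogeneity. Set $h_1 := g_1$, and, having defined $h_{n-1}$, observe that the tuples $(g_n(a_1),\ldots,g_n(a_{n-1}))$ and $(h_{n-1}(a_1),\ldots,h_{n-1}(a_{n-1}))$ realize the same type in $\Delta$, since $h_{n-1}$ is an $\Aut(\Delta)$-translate of $g_{n-1}$ and $g_n,g_{n-1}$ agree behaviorally on $F_{n-1}$. By homogeneity there is $\gamma_n\in\Aut(\Delta)$ mapping the former tuple to the latter; set $h_n := \gamma_n\circ g_n$. Then $h_n$ agrees with $h_{n-1}$ on $F_{n-1}$, is still canonical on $F_n$ (automorphisms preserve types), and has the same canonical behavior on $F_n$ as $g_n$. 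Defining $g(a) := \lim_n h_n(a)$ gives a well-defined function on $\Delta$ (the sequence is eventually constant at each point). Since every tuple lies in some $F_n$, and each $h_n$ is canonical on $F_n$ with a uniform behavior, $g$ is canonical. Finally, $h_n = \gamma_n\circ f\circ \alpha_n$ is a single composition of $f$ with automorphisms, so $g$ lies in the closure of the monoid generated by $\{f\}\cup\Aut(\Delta)$, i.e., $f$ generates $g$.

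The main obstacle will be the interplay between the pigeonhole step and the alignment step: the stabilization of behaviors must be strong enough that, when $\gamma_n$ is chosen to match the images at $F_{n-1}$, no compatibility with previously fixed values is broken. This is handled by choosing $\gamma_n$ after $h_{n-1}$ is already fixed and by using the fact that the behavior of $g_n$ on \emph{all} of $F_{n-1}$ (not just on any particular tuple) matches that of $g_{n-1}$, so that a single $\gamma_n$ can simultaneously align the entire tuple. The remaining verifications, that $g$ is canonical and that it belongs to the closed monoid generated by $f$ and $\Aut(\Delta)$, are then immediate from the construction.
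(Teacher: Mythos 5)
Your proof is correct and follows essentially the same route as the paper's first proof of this proposition: apply Lemma~\ref{lem:canonicalOnArbitrarilyLargeFinite} to produce translates $f\circ\alpha_n$ that are canonical on the $F_n$, use the finiteness of canonical behaviors (from $\omega$-categoricity and quantifier elimination) to thin to a subsequence with stabilized behavior, then inductively compose with left automorphisms $\gamma_n$ to turn behavioral agreement into pointwise agreement, and take the pointwise limit. The only cosmetic difference is that you spell out the pigeonhole step as a diagonalization while the paper invokes a one-shot pigeonhole on complete behaviors, but both rest on the same finiteness and both produce the same aligned sequence $\gamma_n\circ f\circ\alpha_n$.
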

\begin{proof}[First proof]
    Let $(F_i)_{i\in\omega}$ be an increasing sequence of finite substructures of $\Delta$ such that $\bigcup_{i\in\omega} F_i=\Delta$. By Lemma~\ref{lem:canonicalOnArbitrarilyLargeFinite}, for each $i\in\omega$ we find a copy $F_i'$ of $F_i$ in $\Delta$ on which $f$ is canonical. Since there are only finitely many possibilities of canonical behavior, one behavior occurs an infinite number of times; thus, by thinning out the sequence, we may assume that the behavior is the same on all $F_i'$. By the homogeneity of $\Delta$, there exist automorphisms $\alpha_i$ of $\Delta$ sending $F_i$ to $F_i'$, for all $i\in\omega$. Also, since the behavior on all the $F_i'$ is the same, we can inductively pick automorphisms $\beta_i$ of $\Delta$ such that $\beta_{i+1}\circ f\circ\alpha_{i+1}$ agrees with $\beta_i\circ f\circ\alpha_i$ on $F_i$, for all $i\in\omega$. The union over the functions $\beta_i\circ f\circ \alpha_i: F_i\To\Delta$ is a canonical function on $\Delta$.
\end{proof}
\begin{proof}[Second proof]
    The identity function $\id: \Delta\To\Delta$ is generated by $f$ and is canonical.
\end{proof}

The problem with the preceding lemma is the second proof, which makes it trivial. What we really want is that $f$ generates a canonical function $g$ which represents $f$ in a certain sense -- it should be possible to retain specific properties of $f$ when passing to the canonical functions. For example, we could wish that if $f$ violates a certain relation, then so does $g$; or, if $f$ is not an automorphism of $\Delta$, we will look for a canonical function $g$ which is not an automorphism of $\Delta$ either.

We are now going to refine our method, and fix constants $c_1,\dots,c_n$
such that $f \notin \Aut(\Delta)$ is witnessed on $\{c_1,\dots,c_n\}$. 
We then consider $f$ as a function from $(\Delta,c_1,\dots,c_n)$ to $\Delta$, 
where $(\Delta,c_1,\dots,c_n)$ denotes the expansion of $\Delta$ by the constants
$c_1,\dots,c_n$. It turns out that $f$ is canonical on arbitrarily large
substructures of $(\Delta,c_1,\dots,c_n)$, and that it generates a canonical
function $g:  (\Delta,c_1,\dots,c_n) \rightarrow \Delta$ which agrees with $f$ on
$c_1,\dots,c_n$; in particular, $g$ is not an automorphism of $\Delta$, 
and the problem of triviality in Proposition 20 no longer occurs. 
In order to do this, we must assure that  $(\Delta,c_1,\ldots,c_n)$ still has the Ramsey property. This leads us into topological dynamics. 

\section{Topological Dynamics}\label{sect:topologicaldynamics}
We have seen in the previous section that our approach crucially relies on the fact that
when an ordered homogeneous Ramsey structure is expanded by finitely many constants,
the expansion is again Ramsey (it is clear that the expansion is again ordered and homogeneous). To prove this, we use a characterization in topological dynamics of those ordered homogeneous structures
which are Ramsey.

Recall that a \emph{topological group} is an (abstract) group $G$ together with a topology on the elements of $G$ such that 
$(x,y) \mapsto xy^{-1}$ is continuous from $G^2$ to $G$. In other words, 
we require that the binary group operation and the inverse function are continuous.  


\begin{defn}
    A topological group is \emph{extremely amenable} iff any continuous action of the group on a compact Hausdorff space has a fixed point.
\end{defn}

Kechris, Pestov and Todorcevic have characterized the Ramsey property of the age of an ordered homogeneous structure by means of extreme amenability in the following theorem.

\begin{thm}[Kechris, Pestov, Todorcevic \cite{Topo-Dynamics}]\label{thm:KPT}
    Let $\Delta$ be an ordered homogeneous relational structure. Then the age of $\Delta$ has the Ramsey property iff $\Aut(\Delta)$ is extremely amenable.
\end{thm}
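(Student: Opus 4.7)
The theorem is a bridge between combinatorics (the Ramsey property for colourings of copies of finite substructures of $\Delta$) and topological dynamics (fixed points of continuous $G$-actions, where $G := \Aut(\Delta)$). In both directions the common device is the compact product space $X_F^k := \{1,\ldots,k\}^{\binom{\Delta}{F}}$ of $k$-colourings of copies of a finite $F \subseteq \Delta$, on which $G$ acts continuously and which, via the bijection $\binom{\Delta}{F} \leftrightarrow G/G_F$ (valid because finite substructures of the ordered structure $\Delta$ are rigid), can also be viewed as a space of right-uniformly continuous finite colourings of $G$. My plan is to treat the two implications separately, using the order assumption to identify copies with cosets.

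For the direction ``extremely amenable implies Ramsey'', I would fix finite $\P, \H \in \mathrm{Age}(\Delta)$ and a $k$-colouring $c$ of $\binom{\Delta}{\P}$, viewed as a point of $X_\P^k$ with continuous $G$-action $(g\cdot c)(\P') := c(g^{-1}(\P'))$. Extreme amenability applied to the compact subspace $\overline{G \cdot c}$ yields a $G$-fixed colouring $c^*$; by transitivity of $G$ on $\binom{\Delta}{\P}$ (a consequence of ordered homogeneity) $c^*$ is constant with some value $j_0$. The membership $c^* \in \overline{G \cdot c}$ then supplies, for any fixed copy $\H' \cong \H$ in $\Delta$, an element $g \in G$ with $c(g^{-1}(\P')) = j_0$ for every $\P' \in \binom{\H'}{\P}$, so that $g^{-1}(\H')$ is a monochromatic copy of $\H$ in $\Delta$. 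A standard tree-pruning/compactness argument on the \Fresse~limit then produces the finite witness $\S \in \mathrm{Age}(\Delta)$ with $\S \to (\H)^\P_k$ required by the definition of a Ramsey class.

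For the converse, the plan is to verify the equivalent Pestov-style criterion: $G$ is extremely amenable iff for every right-uniformly continuous finite colouring $\chi : G \to \{1,\ldots,k\}$ and every finite $K \subseteq G$ there is $g \in G$ making $\chi$ constant on $Kg$ (a Zorn/ultrafilter argument on the universal $G$-ambit upgrades this combinatorial condition to a fixed point of any continuous compact $G$-action). Such a $\chi$ factors through $G/G_F$ for some finite $F \subseteq \Delta$ and, by rigidity, corresponds to a finite colouring $\hat\chi$ of $\binom{\Delta}{F}$. The criterion then unravels to: there exists a copy $F' \cong F$ in $\Delta$ such that $\hat\chi$ is constant on the configuration $\{k(F') : k \in K\}$. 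I would extract such an $F'$ by applying the Ramsey property of $\mathrm{Age}(\Delta)$ to $\hat\chi$ together with an auxiliary ``configuration structure'' $B \in \mathrm{Age}(\Delta)$ designed to encode how the $K$-translates of $F$ sit together inside $\Delta$.

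\textbf{Main obstacle.} The technical heart lies in this last step of the converse: the configurations $\{k(F') : k \in K\}$ can have different isomorphism types in $\Delta$ as $F'$ varies, so a na\"ive application of Ramsey to copies of a fixed $B$ only produces a common colour \emph{vector} along the $K$-translates, not genuine monochromaticity. My plan to overcome this is to enumerate the finitely many $G$-orbits of such configurations (finite by $\omega$-categoricity) and combine Ramsey applications for each orbit type, again exploiting the rigidity afforded by the order. Throughout, the order assumption is indispensable: it is what identifies colourings of cosets in $G/G_F$ with colourings of copies of $F$ and so allows the Ramsey property to speak to topological dynamics.
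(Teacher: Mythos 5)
The paper itself does not prove this theorem — it is quoted from Kechris--Pestov--Todorcevic — so I can only assess your sketch on its own terms. Your forward direction (extreme amenability $\Rightarrow$ Ramsey) is fine: fixed point in $\overline{G\cdot c}$, constancy by transitivity, approximation to get a monochromatic $g^{-1}(\H')$, then the standard compactness argument to extract a finite $\S$. The overall plan for the converse — reduce to a finitary Pestov-type criterion via the greatest ambit, translate right-uniformly continuous finite colourings into colourings of copies of $F$, and feed this to the Ramsey property — is also the right idea. One small slip: a right-uniformly continuous $\chi$ factors through the \emph{right} coset space $G_F\backslash G$, not $G/G_F$; the identification with copies of $F$ is $G_Fg\mapsto g^{-1}(F)$.

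The genuine gap is in the unravelling of ``$\chi$ is constant on $Kg$''. Under the identification $G_Fg\mapsto g^{-1}(F)$, the set $Kg$ corresponds to $\{(kg)^{-1}(F):k\in K\}=\{g^{-1}(k^{-1}(F)):k\in K\}=g^{-1}(A)$, where $A=\{k^{-1}(F):k\in K\}$ is a \emph{fixed} finite set of copies of $F$. Every such $g^{-1}(A)$ is an automorphic image of $A$, so it has a fixed ``pattern'': set $B:=\bigcup A$, apply the Ramsey property to $(\P,\H,k)=(F,B,k)$, get a monochromatic copy $B'$, extend the isomorphism $B\cong B'$ to $g^{-1}\in G$ by homogeneity, and $g^{-1}(A)\subseteq B'$ is monochromatic. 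No case analysis over orbit types is needed. Your version ``$\exists F'$ with $\hat\chi$ constant on $\{k(F'):k\in K\}$'' is a different and in fact \emph{false} statement: take $\Delta=(\mathbb Q;<)$, $F$ a single point, $c(q)=\lfloor q\rfloor\bmod 2$, and $K=\{\mathrm{id},\tau\}$ with $\tau(x)=x-1\in\Aut(\mathbb Q;<)$; then $c(q)\neq c(q-1)$ for every $q$, so no $F'$ works, even though $\Aut(\mathbb Q;<)$ is extremely amenable. Thus your ``main obstacle'' is an artefact of this misformulation rather than a real difficulty, and the proposed repair (enumerate orbit types and apply Ramsey per type) would not rescue the false formulation anyway: Ramsey delivers a monochromatic copy $B'$ of the union structure, but nothing forces $B'$ to contain a set of the rigid form $\{k(F'):k\in K\}$, since the locations of $k(F')$ are dictated by the actual automorphisms $k$, not just by abstract isomorphism type.
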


This theorem can be applied to provide a short and elegant proof of the following.

\begin{prop}[Bodirsky, Pinsker and Tsankov~\cite{BodPinTsa}]\label{prop:addingConstantsPreservesRamsey}
    Let $\Delta$ be ordered, Ramsey, and homogeneous, and let $c_1,\ldots,c_n\in \Delta$. Then $(\Delta,c_1,\ldots,c_n)$ is Ramsey as well.
\end{prop}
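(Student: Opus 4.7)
The plan is to use Theorem~\ref{thm:KPT} in both directions: first to translate the Ramsey property of $\Delta$ into extreme amenability of $G:=\Aut(\Delta)$, then to transfer extreme amenability from $G$ down to the stabilizer $G':=\Aut(\Delta,c_1,\ldots,c_n)$, and finally to translate back. Note that $(\Delta,c_1,\ldots,c_n)$ is still ordered (we keep the same order) and still homogeneous: any isomorphism between finite substructures of $(\Delta,c_1,\ldots,c_n)$ is in particular an isomorphism between finite substructures of $\Delta$ that fixes the constants pointwise, and hence extends to an automorphism of $\Delta$ fixing the constants, i.e., to an element of $G'$. Thus once extreme amenability of $G'$ is established, the corresponding direction of Theorem~\ref{thm:KPT} finishes the proof.

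Observe that $G'$ is the pointwise stabilizer in $G$ of a finite tuple in the discrete set $\Delta^n$, so $G'$ is an open subgroup of $G$, and the coset space $G/G'$, which $G$-equivariantly identifies with the orbit of $(c_1,\ldots,c_n)$ in $\Delta^n$, is discrete and countable. To transfer extreme amenability, let $G'$ act continuously on a compact Hausdorff space $X$. I would form the induced $G$-space
\[
Y \; := \; \{f\colon G\To X \;:\; f(gh)=h^{-1}\cdot f(g)\text{ for all } g\in G \text{ and } h\in G'\},
\]
equipped with the topology of pointwise convergence inherited from $X^G$, and let $G$ act by left translation via $(g\cdot f)(g_1):=f(g^{-1}g_1)$. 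Because $X$ is Hausdorff, each equivariance constraint is a closed condition on $X^G$, so $Y$ is closed in $X^G$ and hence compact Hausdorff by Tychonoff. It is nonempty, since for any $x_0\in X$ and any system of coset representatives of $G/G'$ we may extend the constant assignment $x_0$ on the representatives to a $G'$-equivariant function on $G$.

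The delicate step is to verify that the $G$-action on $Y$ is jointly continuous. Fix $g_0\in G$, $f_0\in Y$ and $g_1\in G$, and let $(g_\alpha,f_\alpha)\to(g_0,f_0)$ in $G\times Y$. Because $G'$ is open in $G$, the map $g\mapsto g^{-1}g_1G'$ from $G$ to the discrete space $G/G'$ is locally constant, so for sufficiently large $\alpha$ one may write $g_\alpha^{-1}g_1 = g_0^{-1}g_1\cdot h_\alpha^{-1}$ with $h_\alpha := g_1^{-1}g_\alpha g_0^{-1}g_1\in G'$, and $h_\alpha\to e$ in $G'$. The $G'$-equivariance of $f_\alpha$ then gives
\[
(g_\alpha\cdot f_\alpha)(g_1) \; = \; f_\alpha(g_\alpha^{-1}g_1) \; = \; h_\alpha\cdot f_\alpha(g_0^{-1}g_1),
\]
and since $f_\alpha(g_0^{-1}g_1)\to f_0(g_0^{-1}g_1)$ by pointwise convergence, joint continuity of the $G'$-action on $X$ yields $h_\alpha\cdot f_\alpha(g_0^{-1}g_1)\to f_0(g_0^{-1}g_1)=(g_0\cdot f_0)(g_1)$. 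I expect this joint-continuity check to be the main technical obstacle in a careful write-up, because the pointwise convergence topology on $X^G$ only interacts well with the topology of $G$ via the (crucial) fact that $G'$ is open and the $G'$-action on $X$ is continuous.

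Finally, extreme amenability of $G$, which holds by Theorem~\ref{thm:KPT}, yields a $G$-fixed point $f\in Y$. Taking $g_1=e$ in the identity $f(g^{-1}g_1)=f(g_1)$ shows that $f$ is constant with value some $x_0\in X$; combining this with the equivariance $f(h)=h^{-1}\cdot f(e)=h^{-1}\cdot x_0$ for $h\in G'$ forces $h^{-1}\cdot x_0=x_0$ for all $h\in G'$. Hence $x_0$ is a $G'$-fixed point in $X$, so $G'$ is extremely amenable, and Theorem~\ref{thm:KPT} applied to the ordered homogeneous structure $(\Delta,c_1,\ldots,c_n)$ gives the desired Ramsey property.
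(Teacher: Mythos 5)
Your proof is correct and follows exactly the same route as the paper: translate the Ramsey property of $\Delta$ into extreme amenability of $\Aut(\Delta)$ via Theorem~\ref{thm:KPT}, observe that $\Aut(\Delta,c_1,\ldots,c_n)$ is an open subgroup, invoke the fact that open subgroups of extremely amenable groups are extremely amenable, and translate back. The only difference is that the paper simply states this last fact as a lemma (citing~\cite{BodPinTsa}), while you supply a complete and correct proof of it via the coinduced $G$-space of $G'$-equivariant maps, including the joint-continuity check that uses openness of $G'$.
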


When $\Delta$ is ordered, Ramsey, and homogeneous, then $\Aut(\Delta)$ is extremely amenable. Note that the automorphism group of $(\Delta,c_1,\ldots,c_n)$ is an open subgroup of $\Aut(\Delta)$. The proposition thus follows directly from the following fact -- confer~\cite{BodPinTsa}.

\begin{lem}
    Let $G$ be an extremely amenable group, and let $H$ be an open subgroup of $G$. Then $H$ is extremely amenable.
\end{lem}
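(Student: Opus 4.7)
The plan is to induce the given $H$-action on a compact Hausdorff space $X$ up to an action of the full group $G$ on a compact Hausdorff space $Y$, apply extreme amenability of $G$ to extract a $G$-fixed point in $Y$, and then read off an $H$-fixed point in $X$. Concretely, suppose $H$ acts continuously on a compact Hausdorff space $X$; I need to produce a fixed point of this action.

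I would choose a set of left coset representatives $(g_i)_{i \in I}$ for $H$ in $G$ with $g_0 = e$, and set
\[
Y := \bigl\{ f\colon G \to X \ \big|\ f(gh) = h^{-1} \cdot f(g) \text{ for all } g \in G,\ h \in H \bigr\},
\]
equipped with the product topology inherited from $X^G$. The defining conditions are closed in the product topology, so $Y$ is a closed subspace of $X^G$, which is compact Hausdorff by Tychonoff. The space $Y$ is nonempty: for any $x_0 \in X$ the prescription $f(g_i h) := h^{-1} \cdot x_0$ defines an element of $Y$. I then let $G$ act on $Y$ by $(g \cdot f)(g') := f(g^{-1} g')$; a direct check shows this preserves $Y$.

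The crux of the argument will be to verify that this $G$-action $G \times Y \to Y$ is jointly continuous, and this is precisely where the openness of $H$ enters decisively. Fix a coordinate $g' \in G$ and a base point $(g_0, f_0) \in G \times Y$, and write $g_0^{-1} g' = g_i h$ with $h \in H$. Because $H$ is open in $G$, the set $U := \{ g \in G : g^{-1} g' \in g_i H \}$ is an open neighborhood of $g_0$, and on $U$ the map $\tilde h(g) := g_i^{-1} g^{-1} g'$ takes values in $H$ and is continuous. For $g \in U$ and $f \in Y$ one then computes
\[
(g \cdot f)(g') \;=\; f(g_i \tilde h(g)) \;=\; \tilde h(g)^{-1} \cdot f(g_i),
\]
so as $(g, f) \to (g_0, f_0)$, joint continuity of the $H$-action on $X$ yields $(g \cdot f)(g') \to (g_0 \cdot f_0)(g')$. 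Since this holds at every coordinate, the $G$-action on $Y$ is jointly continuous. I expect this continuity check to be the main, indeed essentially the only, obstacle.

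Extreme amenability of $G$ then supplies a fixed point $f \in Y$ of the induced $G$-action. The identity $g \cdot f = f$ for all $g \in G$ unfolds to $f(g^{-1} g') = f(g')$ for all $g, g' \in G$, so $f$ is constant, say $f \equiv x_0$. Combining this with the equivariance condition specialised to $g = e$ gives $x_0 = f(h) = h^{-1} \cdot f(e) = h^{-1} \cdot x_0$ for every $h \in H$, i.e., $x_0 \in X$ is the desired $H$-fixed point. Extreme amenability of $H$ follows.
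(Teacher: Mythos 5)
Your proof is correct. The paper itself does not include a proof of this lemma but merely cites~\cite{BodPinTsa}, so there is no proof text to compare against; nevertheless the construction you use (the co-induced $G$-flow $Y \subseteq X^G$ cut out by the equivariance condition $f(gh)=h^{-1}\cdot f(g)$) is the standard way this implication is established, and your verification of joint continuity of the $G$-action is where the openness of $H$ genuinely enters, exactly as it should. Two small points worth noting, though neither is a gap: nonemptiness of $Y$ really does require the coset-representative construction you give (a constant function would only work if $x_0$ were already $H$-fixed, which would beg the question); and the constraint defining $Y$ already forces each $f\in Y$ to be continuous on every coset $g_iH$, which, since these cosets are open, makes every $f\in Y$ a continuous map $G\to X$ --- your coordinatewise argument sidesteps needing this observation explicitly, but it is the structural reason the action can be jointly continuous at all.
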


\section{Minimal Functions}\label{sect:minimalfunctions}

The results of the preceding section provide a tool for ``climbing up'' the lattice of closed monoids containing the automorphism group of an ordered Ramsey structure which is homogeneous and has a finite language.

\begin{defn}\label{defn:minimalClone}
    Let $\C, \D$ be closed clones. Then $\D$ is called \emph{minimal above $\C$} iff $\D\supseteq\C$ and there are no closed clones between $\C$ and $\D$.
\end{defn}

Observe that transformation monoids can be identified with those clones which have the property that all their functions depend on only one variable. Hence, Definition~\ref{defn:minimalClone} also provides us with a notion of a minimal closed monoid above another closed monoid.

It follows from Theorem~\ref{thm:clones-pp} and Zorn's Lemma that if $\Delta$ is an $\omega$-categorical structure in a finite language, then every closed clone containing $\Pol(\Delta)$ contains a minimal closed clone above $\Pol(\Delta)$. Similarly, as a consequence of Theorem~\ref{thm:monoids-expos}, every closed monoid containing $\End(\Delta)$ contains a minimal closed monoid.

For closed permutation groups, minimality can be defined analogously. Then Theorem~\ref{thm:groups-fo} implies that for $\omega$-categorical structures $\Delta$ in a finite language, every closed permutation group containing $\Aut(\Delta)$ contains a minimal closed permutation group above $\Aut(\Delta)$.

Clearly, if a closed clone $\D$ is minimal above $\C$, then any function $f\in\D\setminus\C$ generates $\D$ with $\C$ (i.e., $\D$ is the smallest closed clone containing $f$ and $\C$) -- similar statements hold for monoids and groups. In the case of clones and monoids and in the setting of reducts of ordered Ramsey structures which are homogeneous in a finite language, we can standardize such generating functions. This is the contents of the coming subsections.

\subsection{Minimal unary functions}\label{subsect:minimalUnary}

Adapting the proof of Lemma~\ref{lem:generatesCanonical}, with the use of the Proposition~\ref{prop:addingConstantsPreservesRamsey}, one can show the following.

\begin{lem}\label{lem:generatesCanonicalWithConstants}
    Let $\Delta$ be ordered, Ramsey, homogeneous, and of finite language. Let $f: \Delta\To \Delta$, and let $c_1,\ldots,c_n\in \Delta$. Then $f$ together with $\Aut(\Delta)$ generates a function which agrees with $f$ on $\{c_1,\ldots,c_n\}$ and which is canonical as a function from $(\Delta,c_1,\ldots,c_n)$ to $\Delta$.
\end{lem}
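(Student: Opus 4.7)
The plan is to adapt the first proof of Proposition~\ref{lem:generatesCanonical}, carrying out the construction inside the expanded structure $\Delta^{*}:=(\Delta,c_{1},\ldots,c_{n})$ rather than inside $\Delta$. By Proposition~\ref{prop:addingConstantsPreservesRamsey}, $\Delta^{*}$ is again Ramsey; it is moreover clearly ordered, homogeneous, and in a finite relational language (replacing each constant by a singleton unary predicate), and in particular $\omega$-categorical. Hence the sketch following Lemma~\ref{lem:canonicalOnArbitrarilyLargeFinite} applies verbatim to $f$ regarded as a function from $\Delta^{*}$ to $\Delta$ and shows that $f$ is canonical on arbitrarily large finite substructures of $\Delta^{*}$.

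I would then pick an increasing sequence $(F_{i})_{i\in\omega}$ of finite substructures of $\Delta^{*}$ with $\bigcup_{i}F_{i}=\Delta$ and $\{c_{1},\ldots,c_{n}\}\subseteq F_{1}$, choose copies $F_{i}'$ of $F_{i}$ in $\Delta^{*}$ on which $f$ is canonical, and pass to a subsequence along which the canonical behavior is the same on every $F_{i}'$. Since $F_{i}\cong F_{i}'$ as substructures of $\Delta^{*}$, the homogeneity of $\Delta^{*}$ yields automorphisms $\alpha_{i}\in\Aut(\Delta^{*})\subseteq\Aut(\Delta)$ with $\alpha_{i}(F_{i})=F_{i}'$; crucially each $\alpha_{i}$ fixes every $c_{j}$. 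Starting from $\beta_{1}=\id$, I would inductively pick $\beta_{i}\in\Aut(\Delta)$ so that $\beta_{i+1}\circ f\circ\alpha_{i+1}$ agrees with $\beta_{i}\circ f\circ\alpha_{i}$ on $F_{i}$; such a $\beta_{i+1}$ exists by the homogeneity of $\Delta$ because, thanks to the common canonical behavior of $f$ on all $F_{i}'$, the tuples $f(\alpha_{i+1}(F_{i}))$ and $\beta_{i}(f(\alpha_{i}(F_{i})))$ have the same type in $\Delta$. The function $g:=\bigcup_{i}\beta_{i}\circ f\circ\alpha_{i}$ is then total, canonical as a function from $\Delta^{*}$ to $\Delta$, and lies in the closed monoid generated by $f$ and $\Aut(\Delta)$, since every finite subset of $\Delta$ eventually sits in some $F_{i}$ on which $g$ agrees with the term $\beta_{i}\circ f\circ\alpha_{i}$.

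To conclude I would verify that $g(c_{j})=f(c_{j})$ for every $j$, which is the only genuine addition beyond the proof of Proposition~\ref{lem:generatesCanonical} and the feature that blocks the triviality of its ``second proof''. Because $c_{j}\in F_{1}$ and $\alpha_{i}(c_{j})=c_{j}$, the compatibility condition at $c_{j}$ defining $\beta_{i+1}$ reads $\beta_{i+1}(f(c_{j}))=\beta_{i}(f(c_{j}))$; together with $\beta_{1}(f(c_{j}))=f(c_{j})$ this yields $\beta_{i}(f(c_{j}))=f(c_{j})$ for every $i$ by induction, whence $g(c_{j})=\beta_{i}(f(\alpha_{i}(c_{j})))=f(c_{j})$. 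The main (mild) subtlety I anticipate is precisely here: one must observe that the demand of fixing $f(c_{1}),\ldots,f(c_{n})$ is not an additional constraint on $\beta_{i+1}$ but a consequence of the compatibility condition on $F_{i}$ once the constants have been included in $F_{1}$, so that a single appeal to homogeneity of $\Delta$ at each inductive step suffices.
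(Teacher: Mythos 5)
Your proof is correct and is precisely the adaptation of the first proof of Proposition~\ref{lem:generatesCanonical} that the paper has in mind (the paper merely remarks that such an adaptation, via Proposition~\ref{prop:addingConstantsPreservesRamsey}, works, without spelling it out). You correctly isolate the one genuinely new point -- that putting $c_1,\dots,c_n$ into $F_1$, noting that each $\alpha_i\in\Aut(\Delta,c_1,\dots,c_n)$ fixes the constants, and taking $\beta_1=\id$ together force $g(c_j)=f(c_j)$ at no extra cost -- which is exactly what rules out the triviality that made the second proof of Proposition~\ref{lem:generatesCanonical} useless.
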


Let $\Gamma$ be a finite language reduct of a structure $\Delta$ which is ordered, Ramsey, homogeneous, and of finite language, and let $\N$ be a minimal closed monoid containing $\End(\Gamma)$. Then, setting $n(\Gamma)$ to be the largest arity of the relations of $\Gamma$, we can pick constants $c_1,\ldots,c_{n(\Gamma)}\in \Gamma$ and a function $f\in \N\setminus\End(\Gamma)$ such that $f\nin\End(\Gamma)$ is witnessed on $\{c_1,\ldots,c_{n(\Gamma)}\}$. By the preceding lemma, $f$ and $\Aut(\Delta)$ generate a function $g$ which behaves like $f$ on $\{c_1,\ldots,c_{n(\Gamma)}\}$ and which is canonical as a function from $(\Delta,c_1,\ldots,c_{n(\Gamma)})$ to $\Delta$. This function $g$, together with $\End(\Gamma)$, generates $\N$. Since there are only finitely many choices for the type of the tuple $(c_1,\ldots,c_{n(\Gamma)})$ and for each choice only finitely many behaviors of functions from $(\Delta,c_1,\ldots,c_{n(\Gamma)})$ to $\Delta$, we get the following.

\begin{prop}[Bodirsky, Pinsker, Tsankov~\cite{BodPinTsa}]\label{prop:finiteMinimalReducts}
    Let $\Gamma$ be a finite language reduct of a structure $\Delta$ which is ordered, Ramsey, homogeneous, and of finite language. Then the number of minimal closed monoids above $\End(\Gamma)$ is finite, and each such monoid is generated by $\End(\Gamma)$ plus a canonical function $g:(\Delta,c_1,\ldots,c_{n(\Gamma)})\To \Delta$, for constants $c_1,\ldots,c_{n(\Gamma)}\in\Gamma$.
\end{prop}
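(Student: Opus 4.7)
The plan is to make precise the argument sketched in the paragraph immediately preceding the proposition. Fix a minimal closed monoid $\N$ above $\End(\Gamma)$; by minimality, any element $h \in \N \setminus \End(\Gamma)$ together with $\End(\Gamma)$ generates all of $\N$ as a closed monoid. Hence the task splits into two parts: first, exhibit a canonical function $g$ of the desired form lying in $\N \setminus \End(\Gamma)$, and second, count how many distinct such $g$ can arise across all minimal $\N$.

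For the first part, I would start by choosing any $f \in \N \setminus \End(\Gamma)$. Since $f$ fails to preserve some relation $R$ of $\Gamma$ of arity $k \leq n(\Gamma)$, there is a tuple $(a_1,\ldots,a_k) \in R$ with $(f(a_1),\ldots,f(a_k)) \notin R$. Setting $c_i := a_i$ for $i \leq k$ and padding arbitrarily with further elements of $\Gamma$ yields constants $c_1,\ldots,c_{n(\Gamma)}$ such that any function agreeing with $f$ on $\{c_1,\ldots,c_{n(\Gamma)}\}$ is automatically not in $\End(\Gamma)$. Now apply Lemma~\ref{lem:generatesCanonicalWithConstants} to obtain a function $g$ generated by $f$ together with $\Aut(\Delta)$, which agrees with $f$ on $\{c_1,\ldots,c_{n(\Gamma)}\}$ and is canonical as a function $(\Delta,c_1,\ldots,c_{n(\Gamma)}) \to \Delta$. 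Since $\Aut(\Delta) \subseteq \End(\Gamma) \subseteq \N$ (every automorphism of $\Delta$ preserves every first-order definable relation, hence every relation of $\Gamma$), we have $g \in \N$; and by the choice of constants $g \notin \End(\Gamma)$, so minimality of $\N$ forces the closed monoid generated by $\End(\Gamma) \cup \{g\}$ to coincide with $\N$.

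For the counting, note that by Theorem~\ref{thm:RN} the $\omega$-categorical structure $\Delta$ has only finitely many types of $n(\Gamma)$-tuples, giving finitely many choices for the type of $(c_1,\ldots,c_{n(\Gamma)})$. For each such choice the expansion $(\Delta,c_1,\ldots,c_{n(\Gamma)})$ is again homogeneous in a finite relational language (homogeneity is preserved under naming finitely many constants), hence $\omega$-categorical and possessing quantifier elimination. Consequently, as explained in the paragraph preceding Definition~22, a canonical function $(\Delta,c_1,\ldots,c_{n(\Gamma)}) \to \Delta$ is determined by its behavior on types of tuples of length at most $n(\Delta)$, and since there are only finitely many such types, only finitely many complete behaviors are possible. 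Each minimal closed monoid above $\End(\Gamma)$ is thus generated by $\End(\Gamma)$ together with a function drawn from a finite set of possibilities, proving finiteness.

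The main obstacle I expect is the transition from $f$ to $g$: one must verify that the canonical function $g$ produced by Lemma~\ref{lem:generatesCanonicalWithConstants} really sits inside $\N$, not merely in the closed monoid generated by $f$ and $\Aut(\Delta)$ viewed abstractly. This is where the inclusion $\Aut(\Delta) \subseteq \End(\Gamma)$ is crucial and must be invoked explicitly; without the hypothesis that $\Gamma$ is a reduct of $\Delta$ this step would fail. A secondary subtlety is ensuring that \emph{any} function agreeing with $f$ on $\{c_1,\ldots,c_{n(\Gamma)}\}$ indeed fails to be an endomorphism of $\Gamma$, which is why the bound $n(\Gamma)$ on the arities of relations of $\Gamma$ appears in the statement and why the witnessing tuple of the failure of $f$ to preserve some relation can be enclosed in a single set of at most $n(\Gamma)$ named constants.
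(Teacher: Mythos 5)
Your proof is correct and follows essentially the same approach as the paper: pick $f\in\N\setminus\End(\Gamma)$ with a witnessing tuple of constants, apply Lemma~\ref{lem:generatesCanonicalWithConstants} to produce a canonical $g$ agreeing with $f$ on those constants, use the inclusion $\Aut(\Delta)\subseteq\End(\Gamma)\subseteq\N$ to conclude $g\in\N$, invoke minimality, and count behaviors via Ryll-Nardzewski plus quantifier elimination. The only thing you make more explicit than the paper's sketch is the verification that $g$ actually lies in $\N$ and the padding argument for the constants, both of which are correct and worth spelling out.
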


Since for every relation $R$ of $\Gamma$ we can add its negation to the language, we get the following

\begin{cor}\label{cor:finiteMinimalSelfEmbeddings}
    Let $\M$ be the monoid of self-embeddings of a finite-language structure $\Gamma$ which is a reduct of a structure $\Delta$ which is ordered, Ramsey, homogeneous, and of finite language. Then the number of minimal closed monoids above $\M$ is finite, and each such monoid is generated by $\M$ and a canonical function $g:(\Delta,c_1,\ldots,c_{n(\Gamma)})\To \Delta$.
\end{cor}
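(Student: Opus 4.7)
The plan is to derive this corollary directly from Proposition~\ref{prop:finiteMinimalReducts} by replacing $\Gamma$ with a suitable expansion $\Gamma'$ whose endomorphism monoid coincides with the self-embedding monoid $\M$ of $\Gamma$. Concretely, I would let $\Gamma'$ be the structure on the same domain as $\Gamma$ obtained by adjoining, for each relation symbol $R$ of $\Gamma$, a new symbol interpreted as the complement $\neg R$ in $\Gamma$, and, if it is not already present, the binary relation $\neq$. Since all relations of $\Gamma$ are first-order definable in $\Delta$, so are their complements and the disequality relation, so $\Gamma'$ is still a reduct of $\Delta$ with a finite relational signature; moreover its maximum arity satisfies $n(\Gamma')=n(\Gamma)$ as soon as $n(\Gamma)\geq 2$, which we may freely assume (for instance by padding a unary relation to a binary one).

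Next, I would verify that $\End(\Gamma')=\M$. A unary operation $f$ lies in $\End(\Gamma')$ iff it preserves $R$ and $\neg R$ for every relation symbol $R$ of $\Gamma$ and preserves $\neq$. Preservation of $R$ and $\neg R$ together says that a tuple lies in $R$ if and only if its $f$-image does, while preservation of $\neq$ forces $f$ to be injective; taken together, these are precisely the conditions for $f$ to be a self-embedding of $\Gamma$.

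With this in place, I would invoke Proposition~\ref{prop:finiteMinimalReducts} applied to $\Gamma'$: the number of minimal closed monoids above $\End(\Gamma')=\M$ is finite, and each such monoid is generated by $\M$ together with a canonical function $g:(\Delta,c_1,\ldots,c_{n(\Gamma')})\To\Delta$. Since $n(\Gamma')=n(\Gamma)$, this yields the conclusion verbatim.

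The only point that needs a little care — and the main obstacle, though a mild one — is making sure that the expansion $\Gamma'$ really captures $\M$ rather than merely the monoid of strong homomorphisms of $\Gamma$: this is the reason for explicitly adjoining $\neq$, which is needed to force injectivity in full generality. Once that is handled, all the non-trivial work is bundled into Proposition~\ref{prop:finiteMinimalReducts}, and the corollary follows by an essentially cosmetic reduction.
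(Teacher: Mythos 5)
Your proposal is correct and follows precisely the same route as the paper: the paper's ``proof'' of this corollary is the single sentence immediately preceding it (``Since for every relation $R$ of $\Gamma$ we can add its negation to the language, we get the following''), which is exactly the reduction you carry out. You are somewhat more careful than the paper in two respects: you explicitly adjoin $\neq$ to force injectivity of the endomorphisms of $\Gamma'$ (without which one only captures strong surjective-type homomorphisms, not self-embeddings), and you note the padding needed so that adjoining $\neq$ does not raise $n(\Gamma)$; both points are genuine details the paper glosses over, and your handling of them is sound.
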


The following is an example for the random graph $G=(V;E)$. Since $G$ is model-complete, its monoid of self-embeddings is just the topological closure $\cl{\Aut(G)}$ of $\Aut(G)$ in the space $V^V$. Therefore, the minimal closed monoids above the monoid of self-embeddings of $G$ are just the minimal closed monoids above $\cl{\Aut(G)}$.

\begin{thm}[Thomas~\cite{Thomas96}]\label{thm:randomMinimalUnary}
    Let $G=(V;E)$ be the random graph. The minimal closed monoids containing $\cl{\Aut(G)}$ are the following:
    \begin{itemize}
        \item The monoid generated by a constant operation with $\Aut(G)$.
        \item The monoid generated by $e_E$ with $\Aut(G)$.
        \item The monoid generated by $e_N$ with $\Aut(G)$.
        \item The monoid generated by $-$ with $\Aut(G)$.
        \item The monoid generated by $\sw$ with $\Aut(G)$.
    \end{itemize}
\end{thm}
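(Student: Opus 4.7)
The strategy is to apply Corollary~\ref{cor:finiteMinimalSelfEmbeddings} with $\Gamma = G$ and $\Delta = (V;E,<)$ the ordered random graph. This $\Delta$ is ordered, homogeneous, Ramsey (its age, the class of finite ordered graphs, is a classical Ramsey class), and of finite signature. By Corollary~\ref{cor:mc}, $G$ is model-complete, so its self-embedding monoid equals $\cl{\Aut(G)}$. Since $n(G) = 2$, the corollary asserts that every minimal closed monoid above $\cl{\Aut(G)}$ is generated, together with $\cl{\Aut(G)}$, by a canonical function $g:(\Delta, c_1, c_2) \to \Delta$ for some $c_1, c_2 \in V$. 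The proof then proceeds in two parts.

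First, I would verify that each of the five listed monoids is distinct from $\cl{\Aut(G)}$ and minimal above it. Distinctness is immediate from inspection on finite subgraphs. For minimality: given $f \notin \cl{\Aut(G)}$ lying in, say, the closed monoid generated by $e_E$ and $\cl{\Aut(G)}$, I would apply Lemma~\ref{lem:generatesCanonicalWithConstants} to extract from $f$ a canonical function whose image pattern, being necessarily clique-like on arbitrarily large sets, allows recovery of $e_E$ itself modulo composition with automorphisms of $G$. The four remaining cases are parallel, each keyed to the characteristic structural signature (independent set, complement, switch, or collapse) of the generator.

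The combinatorial core is classifying canonical $g:(\Delta, c_1, c_2) \to \Delta$ by the monoid they generate with $\cl{\Aut(G)}$. The free elements of $(\Delta, c_1, c_2)$ split into four $\Aut(\Delta, c_1, c_2)$-orbits determined by their edge relations to $c_1$ and $c_2$, each further stratified by $<$; canonicity then fixes the edge/non-edge image for each pair of such orbits. If $g$ identifies two free elements of a common orbit, iteration produces a constant. Otherwise $g$ is injective, and the possibilities for its edge pattern on free pairs, after absorbing post-compositions by the full $\Aut(G)$ (which is free to reorder the image), collapse to exactly five equivalence classes: edge-preserving (giving $g \in \cl{\Aut(G)}$), all-to-edge (giving $e_E$), all-to-non-edge (giving $e_N$), edge-flipping (giving $-$), and a switch-type behavior tied to one of the two constants (giving $\sw$).

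The principal obstacle I foresee is the bookkeeping at the last step: a priori one obtains many canonical behaviors by independently choosing the edge/non-edge image on each pair of orbits, and one must show that every such ``mixed'' behavior either reduces to one of the five after the available post-compositions with $\Aut(G)$, or else can be iterated (with automorphisms inserted in between) into one of the five. The essential enabling fact, as in the analogous arguments in~\cite{BodPinTsa}, is that $\Aut(G)$ is substantially richer than $\Aut(\Delta)$ --- in particular it ignores the order --- which collapses many syntactically distinct canonical behaviors down to the same generated monoid.
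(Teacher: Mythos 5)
The paper does not actually prove this theorem: it is stated as a known result (attributed to Thomas) and presented as an \emph{example} illustrating the framework of Proposition~\ref{prop:finiteMinimalReducts} and Corollary~\ref{cor:finiteMinimalSelfEmbeddings}. Your overall strategy — take $\Delta = (V;E,<)$ as the ordered Ramsey expansion, invoke Corollary~\ref{cor:mc} to identify the self-embedding monoid of $G$ with $\cl{\Aut(G)}$, note $n(G)=2$, and apply Corollary~\ref{cor:finiteMinimalSelfEmbeddings} to reduce to canonical functions $g:(\Delta,c_1,c_2)\to\Delta$ — is exactly the route the paper's surrounding prose points to, so at the level of architecture you have reconstructed the intended proof scheme.

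Where your proposal falls short is the combinatorial core, which you identify as the ``principal obstacle'' but then leave unargued. Your orbit count is off: the free elements of $(\Delta,c_1,c_2)$ are not split into four orbits but into twelve (two choices of adjacency to each of $c_1,c_2$, times three intervals of the order relative to $c_1<c_2$), further multiplied by the choice of whether $c_1,c_2$ are themselves adjacent; and canonicity is governed by the behavior on $2$-types in $(\Delta,c_1,c_2)$, which number in the dozens, not on $1$-types. The assertion that identifying two free elements in a common orbit yields a constant under iteration, and the assertion that every ``mixed'' canonical behavior collapses — after post-composing with $\Aut(G)$ and iterating — to one of the five named monoids, are precisely the non-trivial claims that the theorem consists of. As written, your proposal names the right framework and correctly anticipates which five monoids survive, but does not carry out (or even sketch a strategy for carrying out) the finite-but-substantial case analysis that separates a correct application of Corollary~\ref{cor:finiteMinimalSelfEmbeddings} from the final list. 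That work is what occupies the cited references; without it, the argument is an outline rather than a proof.
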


\subsection{Minimal higher arity functions}\label{subsect:MinimalHigherArity}

We now generalize the concepts from unary functions and monoids to higher arity functions and clones.

\begin{defn}\label{defn:typesOn Products}
    Let $\Delta$ be a structure. For $1\leq i\leq m$ and a tuple $x$ in the power $\Delta^m$, we write
    $x_i$ for the $i$-th coordinate of $x$. The \emph{type} of a sequence of tuples $a^1,\ldots,a^n\in \Delta^m$, denoted by $\tp(a^1,\ldots,a^n)$, is the cartesian product of the types
    of $(a^1_i,\ldots,a^n_i)$ in $\Delta$.
\end{defn}

With this definition, the notions of \emph{type condition}, \emph{behavior}, \emph{complete behavior}, and \emph{canonical} generalize in complete analogy from functions $f:\Gamma_1\To\Gamma_2$ to functions $f:\Gamma_1^m\To\Gamma_2$, for structures $\Gamma_1, \Gamma_2$. It can be shown that for ordered structures, the Ramsey property is not lost when going to products; an example of a proof can be found in~\cite{BodPinTsa}.

\begin{prop}
\label{prop:ORPL}
    Let $\Delta$ be ordered and Ramsey, and let $m\geq 1$. Let moreover a number $k\geq 1$, an $n$-tuple $(a^1,\ldots,a^n) \in\Delta^m$, and finite $F_i\subseteq\Delta$ be given for $1\leq i\leq m$. Then  there exist finite $S_i\subseteq\Delta$ with the property that whenever the $n$-tuples in $S_1\mult\cdots\mult S_m$ of type $\tp(a^1,\ldots,a^n)$ are colored with $k$ colors, then there are copies $F_i'$ of $F_i$ in $S_i$ such that the coloring is constant on $F_1'\mult\cdots\mult F_m'$.
\end{prop}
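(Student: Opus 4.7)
The plan is to proceed by induction on $m$, reducing the multidimensional statement to the one-dimensional Ramsey property of $\Delta$.

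For the base case $m=1$, I would translate between $n$-tuples of a fixed type $\tau_1=\tp(a^1_1,\ldots,a^n_1)$ and copies of a suitable finite ordered substructure of $\Delta$. Since $\Delta$ is ordered, the equality pattern and the relative order of the entries of $(a^1_1,\ldots,a^n_1)$ are determined by $\tau_1$; let $P$ be the finite ordered structure induced on the set of distinct entries, whose isomorphism type is also fixed by $\tau_1$. Then copies of $P$ in any $S\subseteq\Delta$ correspond bijectively to $n$-tuples of type $\tau_1$ in $S$, the order providing a canonical enumeration and $\tau_1$ prescribing which repetitions to insert. Applying the Ramsey property of $\Delta$ produces a finite $S_1\subseteq\Delta$ with $S_1\rightarrow(F_1)^P_k$; any $k$-coloring of $n$-tuples of type $\tau_1$ in $S_1$ transfers via this bijection to a coloring of copies of $P$, yielding the desired monochromatic copy $F_1'$ of $F_1$.

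For the inductive step, I would peel off the last coordinate by iterated coloring. Let $\tau_i=\tp(a^1_i,\ldots,a^n_i)$. First, the base case gives a finite $S_m\subseteq\Delta$ such that any $k$-coloring of $n$-tuples of type $\tau_m$ in $S_m$ admits a copy $F_m'$ of $F_m$ on which the coloring is constant. If $T$ is the number of such tuples in $S_m$, there are $K:=k^T$ possible $k$-colorings of them. Applying the induction hypothesis in dimension $m-1$ with $K$ colors produces finite $S_1,\ldots,S_{m-1}\subseteq\Delta$ with the corresponding product Ramsey property. Given a $k$-coloring $\chi$ of $n$-tuples of type $\tp(a^1,\ldots,a^n)$ in $S_1\times\cdots\times S_m$, I would define an auxiliary $K$-coloring $\bar\chi$ of $n$-tuples of type $\tau_1\times\cdots\times\tau_{m-1}$ in $S_1\times\cdots\times S_{m-1}$ sending $(b^1,\ldots,b^n)$ to the $k$-coloring $(c^1,\ldots,c^n)\mapsto\chi((b^1,c^1),\ldots,(b^n,c^n))$ of $n$-tuples of type $\tau_m$ in $S_m$. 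The induction hypothesis yields copies $F_1',\ldots,F_{m-1}'$ on whose product $\bar\chi$ takes a single value $\chi^*$, and the base case applied to $\chi^*$ inside $S_m$ produces $F_m'$ on which $\chi^*$ is constant; the original coloring $\chi$ is then constant on tuples of type $\tp(a^1,\ldots,a^n)$ in $F_1'\times\cdots\times F_m'$.

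The hard part will be the base case: the Ramsey property of $\Delta$ is stated in terms of colorings of copies of a finite structure, while the proposition concerns colorings of $n$-tuples of a given type. The order on $\Delta$ is essential for making the translation canonical (without it, automorphisms of $P$ would cause several copies of $P$ to correspond to the same tuple), and tuples with repeated entries require a small amount of bookkeeping. Once this correspondence is set up, the inductive step is a routine iterated pigeonhole argument of the type familiar from the standard proof of product Ramsey theorems.
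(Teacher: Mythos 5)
The paper does not give a proof of Proposition~\ref{prop:ORPL}; it defers to the reference \cite{BodPinTsa} with the remark ``an example of a proof can be found in \cite{BodPinTsa}''. Your plan is the standard product-Ramsey argument (base case translates tuple colorings into substructure colorings, inductive step is the iterated pigeonhole on compound colorings), which is precisely the kind of proof being alluded to, and the inductive step is set up correctly: obtain $S_m$ first from the base case with $k$ colors, let $K = k^T$ where $T$ counts the relevant tuples in $S_m$, invoke the dimension-$(m-1)$ statement with $K$ colors, and stitch $F'_m$ on at the end. That all checks out.

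One small imprecision in the base case: you claim that copies of $P$ in $S$ correspond \emph{bijectively} to $n$-tuples of type $\tau_1$ in $S$. The map from $n$-tuples of type $\tau_1$ to copies of $P$ (contract repeated entries, forget the enumeration) is injective thanks to the order, but it need not be surjective: if $\Delta$ is not homogeneous, a substructure of $S$ isomorphic to $P$ can carry a tuple whose quantifier-free type matches $\tau_1$ but whose full first-order type does not, so that tuple is not in the domain of your map. (In the paper's actual applications $\Delta$ is homogeneous, so the two notions coincide and the bijection is correct, but the proposition as stated only assumes ordered and Ramsey.) This does not break the argument --- simply extend the given $k$-coloring of tuples of type $\tau_1$ to an arbitrary $k$-coloring of \emph{all} copies of $P$ in $S_1$, apply $S_1 \rightarrow (F_1)^P_k$, and observe that the resulting monochromatic $F'_1$ in particular makes all tuples of type $\tau_1$ inside it monochromatic --- but the word ``bijectively'' should be weakened to ``injectively'' and this one-line extension step should be mentioned. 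With that adjustment, the proof is complete and follows the expected route.
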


We remark that Proposition~\ref{prop:ORPL} does not hold in general if $\Delta$ is not assumed to be ordered -- an example for the random graph can be found in~\cite{RandomMinOps}. Similarly to the unary case (Proposition~\ref{prop:finiteMinimalReducts}), one gets the following.

\begin{prop}[Bodirsky, Pinsker, Tsankov~\cite{BodPinTsa}]\label{prop:canonicalMinimalClones}
    Let $\Gamma$ be a finite language reduct of a structure $\Delta$ which is ordered, Ramsey, homogeneous and of finite language. Then every minimal closed clone above $\Pol(\Gamma)$ is generated by $\Pol(\Gamma)$ and a canonical function $g:(\Delta,c_1,\ldots,c_{k})^m\To \Delta$, where $m\geq 1$, $k\geq 0$, and $c_1,\ldots,c_{k}\in \Delta$. Moreover, $m$ only depends on the number of $n(\Gamma)$-types in $\Gamma$ (and not on the clone), and $k$ only depends on $m$ and $n(\Gamma)$, and the number of minimal closed clones above $\Pol(\Gamma)$ is finite.
\end{prop}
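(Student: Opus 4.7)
The plan is to generalize the proof of Proposition~\ref{prop:finiteMinimalReducts} from the unary to the $m$-ary setting, replacing Lemma~\ref{lem:generatesCanonicalWithConstants} by its product analogue obtained from Proposition~\ref{prop:ORPL}. Let $\D$ be a minimal closed clone above $\Pol(\Gamma)$ and pick any $f\in \D\setminus\Pol(\Gamma)$ of some arity $m$; by minimality, $\Pol(\Gamma)\cup\{f\}$ already generates $\D$. Non-membership of $f$ in $\Pol(\Gamma)$ is witnessed by tuples $r^{1},\dots,r^{m}$ of length at most $n(\Gamma)$ lying in some relation $R$ of $\Gamma$ such that $f(r^{1},\dots,r^{m})\notin R$. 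Enumerate all coordinates appearing in $r^{1},\dots,r^{m}$ as $c_{1},\dots,c_{k}$, so that $k\leq m\cdot n(\Gamma)$.

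The next step is an $m$-ary canonization. By Proposition~\ref{prop:addingConstantsPreservesRamsey} the structure $(\Delta,c_{1},\dots,c_{k})$ is again ordered, Ramsey, and homogeneous in a finite relational language. Applying Proposition~\ref{prop:ORPL} to this expansion, together with Theorem~\ref{thm:RN} (which guarantees that $f$ induces only finitely many image types on $n$-tuples from $(\Delta,c_{1},\dots,c_{k})^{m}$, hence gives only a finite coloring), shows that $f$ is canonical on arbitrarily large finite subpowers of $(\Delta,c_{1},\dots,c_{k})^{m}$. Now pigeonhole on the finitely many possible complete behaviors and use the homogeneity of $(\Delta,c_{1},\dots,c_{k})$: inductively one picks automorphisms $\alpha_{i}^{1},\dots,\alpha_{i}^{m}\in\Aut(\Delta)$ each fixing $c_{1},\dots,c_{k}$, and $\beta_{i}\in\Aut(\Delta)$, so that the functions $\beta_{i}\circ f\circ(\alpha_{i}^{1},\dots,\alpha_{i}^{m})$ agree on an exhaustion of $\Delta^{m}$ and their limit $g$ is canonical from $(\Delta,c_{1},\dots,c_{k})^{m}$ to $\Delta$. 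The function $g$ lies in the closed clone generated by $\Pol(\Gamma)\cup\{f\}$, and it agrees with $f$ on $(r^{1},\dots,r^{m})$, so $g\notin\Pol(\Gamma)$; by minimality of $\D$, already $\Pol(\Gamma)\cup\{g\}$ generates $\D$.

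For the finiteness and uniformity of the parameters, the key point is that the complete behavior of $g$ depends only on the action of $g$ on $n(\Gamma)$-types of $m$-tuples of coordinates from $(\Delta,c_{1},\dots,c_{k})$; two input coordinates $x_{i},x_{j}$ treated identically by this behavior can be merged (by identifying the corresponding arguments) without changing either the membership of $g$ in $\Pol(\Gamma)$ or the clone it generates with $\Pol(\Gamma)$. Hence we may assume the arity $m$ is bounded by a function of the number of $n(\Gamma)$-types in $\Gamma$; then $k\leq m\cdot n(\Gamma)$ is bounded in terms of $m$ and $n(\Gamma)$, and by Theorem~\ref{thm:RN} the type of $(c_{1},\dots,c_{k})$ in $\Delta$ ranges over a finite set. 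Since for each choice of arity, type of the constants, and complete behavior between the finitely many source and target types there is at most one minimal clone generated over $\Pol(\Gamma)$, the total number of minimal closed clones above $\Pol(\Gamma)$ is finite.

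The main obstacle is the arity bound: while the production of a canonical generator $g$ from $f$ is a straightforward product-Ramsey upgrade of the unary proof, one has to verify carefully that merging columns whose canonical roles coincide preserves both the relational violation that certifies $g\notin\Pol(\Gamma)$ and the generation of the whole clone $\D$. Once this combinatorial bookkeeping is in place, the remaining ingredients are precisely Propositions~\ref{prop:addingConstantsPreservesRamsey} and~\ref{prop:ORPL} together with Theorem~\ref{thm:RN}, exactly as in the unary case.
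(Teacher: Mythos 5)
Your overall plan---first canonize via the product Ramsey statement (Propositions~\ref{prop:addingConstantsPreservesRamsey} and~\ref{prop:ORPL}), then count behaviors, types of constant tuples, and arities---is the right skeleton and matches what the cited reference does. The canonization step itself is essentially correct, with the small caveat that $g$ need not literally agree with $f$ on the witness tuples; rather, $g(r^1,\dots,r^m)$ is the limit of $\beta_i\bigl(f(r^1,\dots,r^m)\bigr)$, which still lies outside $R$ because each $\beta_i \in \Aut(\Delta)\subseteq\Aut(\Gamma)$ preserves the complement of $R$, and that is all one needs for $g\notin\Pol(\Gamma)$.

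The genuine gap is in your arity reduction. You propose to first canonize to obtain $g\colon(\Delta,c_1,\dots,c_k)^m\to\Delta$ and then to ``merge input coordinates treated identically by the canonical behavior.'' This does not work. The witness tuples $r^1,\dots,r^m$ have all entries among $c_1,\dots,c_k$, so distinct witness columns $r^i\neq r^j$ have \emph{different} types over $(\Delta,c_1,\dots,c_k)$; there is no automorphism of the expansion carrying one to the other. Identifying arguments $i$ and $j$ of $g$ therefore feeds $g$ the tuple with $r^i$ in place of $r^j$, which has a different type than $(r^1,\dots,r^m)$, and canonicity then gives you no control over whether $R$ is still violated. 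Nor is ``the behavior is symmetric in coordinates $i,j$'' the relevant condition. The correct argument, in the spirit of~\cite{OligoClone}, performs the arity reduction on the original (non-canonical) $f$ \emph{before} fixing constants: if two witness columns $r^i, r^j$ have the same $\Gamma$-type, pick $\alpha\in\Aut(\Gamma)\subseteq\Pol(\Gamma)$ with $\alpha(r^i)=r^j$ and form the $(m-1)$-ary $f'(x^1,\dots,x^{m-1}):=f(x^1,\dots,x^{j-1},\alpha(x^i),x^j,\dots,x^{m-1})$; then $f'$ lies in the clone generated by $f$ and $\Pol(\Gamma)$, and it still violates $R$ because $f'(r^1,\dots,\widehat{r^j},\dots,r^m)=f(r^1,\dots,r^m)\notin R$. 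Iterating until all witness columns have pairwise distinct $\Gamma$-types bounds $m$ by the number of $n(\Gamma)$-types of $\Gamma$, after which you choose $c_1,\dots,c_k$ (so $k\le m\cdot n(\Gamma)$) and canonize. Doing the reduction after canonization, as you propose, also leaves $k$ tied to the original, unbounded arity, which defeats the purpose. Once the order of these two steps is fixed, your finiteness count at the end goes through.
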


In the case of minimal closed clones above an endomorphism monoid, the arity of the generating canonical functions can be further reduced as follows.

\begin{prop}[Bodirsky, Pinsker, Tsankov~\cite{BodPinTsa}]\label{prop:canonicalMinimalClonesAboveEnd}
    Let $\Gamma$ be a finite language reduct of a structure $\Delta$ which is ordered, Ramsey, homogeneous and of finite language. Then every minimal closed clone above $\End(\Gamma)$ is generated by $\End(\Gamma)$ and a canonical function $g:(\Delta,c_1,\ldots,c_{n(\Gamma)})\To \Delta$, or by $\End(\Gamma)$ and a canonical function $g:(\Delta,c_1,\ldots,c_m)^m\To\Delta$, where $m$ only depends on the number of $2$-types in $\Gamma$ (and not on the clone). In particular, the number of minimal closed clones above $\End(\Gamma)$ is finite.
   \end{prop}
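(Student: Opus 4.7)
The plan is to reduce to Proposition~\ref{prop:canonicalMinimalClones}, refining its arity bound by exploiting that we sit above the full monoid $\End(\Gamma)$ rather than merely $\Pol(\Gamma)$. Let $\mathcal{D}$ be a minimal closed clone above $\End(\Gamma)$, and pick $f \in \mathcal{D}\setminus\End(\Gamma)$ of minimum arity $m\ge 1$. If $m=1$, the set of unary operations in $\mathcal{D}$ forms a closed monoid $\mathcal{M}$ strictly containing $\End(\Gamma)$; any closed monoid strictly between $\End(\Gamma)$ and $\mathcal{M}$ would, together with $\End(\Gamma)$, generate a closed clone strictly between $\End(\Gamma)$ and $\mathcal{D}$, contradicting minimality. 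Hence $\mathcal{M}$ is itself minimal above $\End(\Gamma)$, and Proposition~\ref{prop:finiteMinimalReducts} supplies a canonical unary generator $g\colon(\Delta,c_1,\ldots,c_{n(\Gamma)})\to\Delta$ of $\mathcal{M}$, which together with $\End(\Gamma)$ generates $\mathcal{D}$.

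Suppose instead $m\ge 2$, so that $f$ depends essentially on at least two coordinates. Choose constants $c_1,\ldots,c_m\in\Delta$ on which $f\notin\End(\Gamma)$ is already witnessed. By Proposition~\ref{prop:addingConstantsPreservesRamsey}, the expansion $(\Delta,c_1,\ldots,c_m)$ is still ordered, Ramsey and homogeneous, so Proposition~\ref{prop:ORPL} applies to its $m$-fold product. Mimicking the first proof of Proposition~\ref{lem:generatesCanonical} on the product — choose an exhaustion $(F_i)_{i\in\omega}$ of $\Delta$, apply Proposition~\ref{prop:ORPL} to each $F_i^m$ using that by Theorem~\ref{thm:RN} there are only finitely many possible image types, thin to a common canonical behavior, and glue by pre-composition with automorphisms of $(\Delta,c_1,\ldots,c_m)$ and post-composition with automorphisms of $\Delta$ — one produces a function $g\colon(\Delta,c_1,\ldots,c_m)^m\to\Delta$ which is generated by $f$ together with $\Aut(\Delta)\subseteq\End(\Gamma)$, is canonical, and agrees with $f$ on $\{c_1,\ldots,c_m\}^m$. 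Hence $g\notin\End(\Gamma)$, and by minimality of $\mathcal{D}$ the clone generated by $g$ and $\End(\Gamma)$ equals $\mathcal{D}$.

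The main obstacle is bounding $m$ by a quantity depending only on the number $t_2$ of $2$-types of $\Gamma$; this is where being above the full monoid, not just the clone $\Pol(\Gamma)$, yields the improvement from $n(\Gamma)$-types to $2$-types. The intended argument is an essential-arity reduction. Any essential multi-variable dependence of $f$ is already visible on a single pair of coordinates: freezing all but two inputs to constants gives a binary trace of $f$ whose canonical behavior is governed by $2$-types of $\Gamma$, of which there are only $t_2$. If $m$ exceeds a fixed function of $t_2$, a pigeonhole among the coordinate-wise behaviors of $f$ forces two input coordinates to be interchangeable after pre-composition with suitable endomorphisms in $\End(\Gamma)$; identifying these two coordinates then produces a function in $\mathcal{D}\setminus\End(\Gamma)$ of strictly smaller arity that still depends essentially on two coordinates, contradicting the minimality of $m$. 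Making the identification step rigorous — in particular, checking that the identification remains outside $\End(\Gamma)$ — is the technical core, and is the step I would expect to be hardest; it is carried out in~\cite{BodPinTsa}. Once $m$ is bounded in terms of $t_2$, Theorem~\ref{thm:RN} bounds both the number of $m$-types of tuples of constants and the number of canonical behaviors $(\Delta,c_1,\ldots,c_m)^m\to\Delta$, so only finitely many minimal closed clones above $\End(\Gamma)$ arise, yielding the final assertion.
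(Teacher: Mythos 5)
The paper itself does not prove Proposition~\ref{prop:canonicalMinimalClonesAboveEnd} --- it is stated as a result of~\cite{BodPinTsa} with no argument given --- so there is no in-paper proof to compare against; I evaluate your proposal on its own.

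Your $m=1$ case is correct: the unary part $\M$ of $\D$ is strictly above $\End(\Gamma)$, and since the unary part of the closed clone generated by a closed transformation monoid $\N$ is $\N$ itself, minimality of $\D$ forces $\M$ to be a minimal closed monoid above $\End(\Gamma)$; Proposition~\ref{prop:finiteMinimalReducts} then finishes. Your canonization step for $m\ge 2$ is also the right move and is a legitimate adaptation of Lemma~\ref{lem:generatesCanonicalWithConstants} and Proposition~\ref{prop:ORPL}.

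The genuine gap is precisely the step you yourself flag as the ``technical core'': the arity bound. You assert that if $m$ exceeds a function of the number $t_2$ of $2$-types, a pigeonhole on coordinate-wise binary traces produces two ``interchangeable'' coordinates, and that identifying them yields a smaller-arity function still in $\D\setminus\End(\Gamma)$. But this is the entire content of the proposition --- it is what distinguishes it from Proposition~\ref{prop:canonicalMinimalClones}, where the arity bound involves $n(\Gamma)$-types --- and you give no argument for it. In particular, (i) identifying two coordinates of a non-endomorphism can very well produce an essentially unary function or an endomorphism, so ``remains outside $\End(\Gamma)$'' is not automatic; and (ii) you do not explain why the witness of $f\notin\End(\Gamma)$ can be captured with only $m$ constants rather than the $m\cdot n(\Gamma)$ that a naive count of the witnessing tuple would give, which is what the claimed codomain $(\Delta,c_1,\ldots,c_m)^m$ requires. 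Deferring both points to~\cite{BodPinTsa} leaves the proposition unproved. A correct route would need a concrete argument --- e.g., first showing that if $\D$ adds no new unary functions, then one may restrict to diagonals/pairs of coordinates and use the finitely many canonical binary behaviours (classified by $2$-types) to bound the essential arity, together with a careful tracking of which constants are actually needed to witness non-membership in $\End(\Gamma)$ after canonization --- and this is exactly what is missing.
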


Using this technique, the minimal closed clones containing the automorphism group of the random graph $G=(V;E)$ have been determined. In the following, let $f: V^2 \rightarrow V$ be a binary
operation; we now define some possible behaviors for $f$.
We say that $f$ is
\begin{itemize}
\item \emph{of type $p_1$} iff for all $x_1,x_2,y_1,y_2 \in V$ with $x_1 \neq x_2$
and $y_1 \neq y_2$ we have
$E(f(x_1,y_1),f(x_2,y_2))$ if and only if
$E(x_1,x_2)$;
\item \emph{of type $\max$} iff for all $x_1,x_2,y_1,y_2 \in V$ with $x_1 \neq x_2$ and $y_1 \neq y_2$ we have
$E(f(x_1,y_1),f(x_2,y_2))$ if and only if
$E(x_1,x_2)$ or $E(y_1,y_2)$;
\item \emph{balanced in the first argument} iff for all $x_1,x_2,y \in V$
with $x_1 \neq x_2$ we have $E(f(x_1,y),f(x_2,y))$ if and only if $E(x_1,x_2)$;
\item \emph{balanced in the second argument}
iff $(x,y) \mapsto f(y,x)$ is balanced in the first argument;
\item \emph{$E$-dominated in the first argument}
iff for all $x_1,x_2,y \in V$ with $x_1 \neq x_2$ we have that $E(f(x_1,y),f(x_2,y))$;
\item \emph{$E$-dominated in the second argument} iff
$(x,y) \mapsto f(y,x)$ is $E$-dominated in the first argument.
\end{itemize}

The \emph{dual} of an operation $f(x_1,\ldots,x_n)$ on $V$ is defined by $-f(-x_1,\ldots,-x_n)$.

\begin{theorem}[Bodirsky and Pinsker~\cite{RandomMinOps}]\label{thm:minimalRandomClones}
Let $G=(V;E)$ be the random graph, and let $\C$ be a minimal closed clone above $\cl{\Aut(G)}$. Then $\C$ is generated by $\Aut(G)$ together with one of the unary functions of Theorem~\ref{thm:randomMinimalUnary}, or by $\Aut(G)$ and one of the following canonical operations from $G^2$ to $G$:
\begin{itemize}
\item a binary injection of type $p_1$ that is balanced in both arguments;
\item a binary injection of type $\max$ that is balanced in both arguments;
\item a binary injection of type $\max$ that is $E$-dominated in both arguments;
\item a binary injection of type $p_1$ that is $E$-dominated in both arguments;
\item a binary injection of type $p_1$ that is balanced in the first
and $E$-dominated in the second argument;
\item the dual of one of the last four operations.
\end{itemize}
\end{theorem}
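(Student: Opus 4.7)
The plan is to invoke Proposition~\ref{prop:canonicalMinimalClonesAboveEnd} to reduce the classification of minimal closed clones above $\cl{\Aut(G)}$ to a finite case analysis on canonical functions. To apply that proposition, one first takes a finite-language reduct $\Gamma$ of the ordered random graph $(V;E,<)$ whose endomorphism monoid is precisely $\cl{\Aut(G)}$; one concrete choice is $\Gamma=(V;E,N)$, where $N$ is the complement of $E$ on distinct elements, so that endomorphisms of $\Gamma$ are exactly the self-embeddings of $G$. Proposition~\ref{prop:canonicalMinimalClonesAboveEnd} then asserts that every minimal closed clone $\C$ above $\cl{\Aut(G)}$ is generated (over $\cl{\Aut(G)}$) either by a canonical unary function $g:(V,c_1,\ldots,c_k)\to V$ or by a canonical $m$-ary function $g:(V,c_1,\ldots,c_m)^m\to V$, where $m$ is bounded in terms of the number of $2$-types of $G$. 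Since the non-equal $2$-types of $G$ are just edge and non-edge, it suffices to treat the cases $m\in\{1,2\}$.

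In the unary case, Theorem~\ref{thm:randomMinimalUnary} directly produces the five listed unary generators. For the binary case, the next step is to remove the parameters $c_1,c_2$: using homogeneity of $G$ together with Proposition~\ref{prop:ORPL}, one finds a generic $\Aut(V,c_1,c_2)$-orbit in $V^2$ on which $g$ is canonical as a map $G^2\to G$, and its restriction (up to composition with automorphisms of $G$) already generates $\C$. Moreover, $g$ may be assumed injective: a non-injective canonical $g$ yields a unary function $x\mapsto g(x,a)$ (for some suitable $a$) that is not injective, hence lies outside $\cl{\Aut(G)}$, which reduces the problem to the unary case by minimality of $\C$.

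The crux is then the classification of injective canonical functions $g:G^2\to G$. Such a $g$ is determined by the map it induces on $2$-types: to each ordered pair of $2$-types in $\{E,N,=\}\times\{E,N,=\}\setminus\{(=,=)\}$ it assigns an output $2$-type in $\{E,N\}$. Using the automorphism $-$ of $G$ on the output and on each input coordinate, one partitions these behaviors into equivalence classes under ``taking duals''. A direct inspection matches each behavior either with one of the six listed operations (types $p_1$ and $\max$ in the four symmetric combinations of balanced / $E$-dominated, plus the asymmetric $p_1$ that is balanced in the first and $E$-dominated in the second argument), or with its dual. For a behavior not matching a listed one, one shows via the Ramsey-theoretic tools of Section~\ref{sect:minimalfunctions}---applied to $g$ and its compositions with elements of $\Aut(G)$---that $g$ generates a canonical function of a listed type, so that $\C$ properly contains the minimal clone generated by that listed function, contradicting minimality of $\C$ unless $\C$ equals it.

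The main obstacle is this exhaustive combinatorial case analysis, where one must repeatedly exploit the rich symmetry of the random graph to extract, from an arbitrary non-listed canonical $g$, compositions with automorphisms that realize a listed canonical behavior. The asymmetric case is particularly delicate, as it lacks the left-right symmetry of the other five and must be handled by ad hoc graph-theoretic arguments on $G$. By contrast, verifying that each of the listed operations genuinely lies outside $\cl{\Aut(G)}$ and generates a clone with that property is routine: each listed behavior forces a specific violation of embedding-invariance that can be read off directly from its defining type conditions.
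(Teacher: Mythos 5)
The survey does not actually prove Theorem~\ref{thm:minimalRandomClones}; it cites \cite{RandomMinOps} and only describes the general technique (canonical functions obtained via Propositions~\ref{prop:canonicalMinimalClones}--\ref{prop:canonicalMinimalClonesAboveEnd}, then a finite case analysis). Your proposal follows that general method, and the choice $\Gamma=(V;E,N)$ to make $\End(\Gamma)=\cl{\Aut(G)}$ is a clean way to place yourself in the setting of Proposition~\ref{prop:canonicalMinimalClonesAboveEnd}. So at the level of strategy, you and the paper are aligned.

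However, there is a genuine gap right at the decisive step. You write that since the non-trivial $2$-types of $G$ are just ``edge'' and ``non-edge,'' Proposition~\ref{prop:canonicalMinimalClonesAboveEnd} lets you restrict to $m\in\{1,2\}$. That proposition only says that $m$ is \emph{some} number determined by the number of $2$-types; it does not say the bound equals that number, and the survey never states a formula for $m$. Deriving that binary canonical polymorphisms suffice for the random graph is a substantial part of the original argument in \cite{RandomMinOps} and cannot be read off from the proposition as stated. As written, the arity reduction from $m$-ary down to binary is asserted but not proved, and the whole classification collapses without it.

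Two smaller issues. First, the non-injectivity reduction is stated too loosely: a canonical non-injective $g:G^2\to G$ need not make any fixed-section $x\mapsto g(x,a)$ non-injective at the level of the type condition you first exhibit; one has to use the compositionality of type conditions (e.g., if $(E,N)\mapsto{=}$ then taking three pairwise non-adjacent points in the second coordinate forces $({=},N)\mapsto{=}$) to propagate the collapse into a unary section. The conclusion is right, but the step as phrased would not survive scrutiny. Second, the count of listed binary behaviors is off (you say ``six''; there are five plus the duals of the last four, nine in total), which suggests the intended symmetry reduction of the $2^8$ candidate behaviors has not actually been set up. The core combinatorial content of the theorem -- showing that every non-listed injective canonical behavior generates one of the listed ones -- is acknowledged as ``the main obstacle'' but is left entirely unaddressed, so the proposal should be regarded as an outline with a real gap at the arity bound rather than a proof.
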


In~\cite{BodPin-Schaefer}, the technique of canonical functions was applied again to climb up further in the lattice of closed clones above $\Aut(G)$ -- we will come back to this in Section~\ref{sect:csp}.

Another example are the minimal closed clones containing all permutations of a countably infinite base set $X$. Observe that the set $\S_X$ of all permutations on $X$ is the automorphism group of the structure $(X;=)$ which has no relations.

\begin{theorem}[Bodirsky and K\'{a}ra~\cite{ecsps}; cf.~also~\cite{BodChenPinsker}]\label{thm:minimalAboveS}
The minimal closed clones containing $\cl{\S_X}$ on a countably infinite set $X$ are:
\begin{itemize}
\item The closed clone generated by $\S_X$ and any constant operation;
\item The closed clone generated by $\S_X$ and any binary injection.
\end{itemize}
\end{theorem}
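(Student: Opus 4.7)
Identify $X$ with $\mathbb{Q}$ and view $\Gamma := (X;\neq)$ as a finite-language reduct of $\Delta := (\mathbb{Q};<)$, which is ordered, Ramsey, and homogeneous. Since the injections on a countably infinite set are precisely the pointwise limits of permutations, $\End(\Gamma) = \cl{\S_X}$, so the minimal closed clones above $\cl{\S_X}$ are exactly the minimal closed clones above $\End(\Gamma)$, and Proposition~\ref{prop:canonicalMinimalClonesAboveEnd} applies. The plan is therefore to deduce that every minimal closed clone $\C$ above $\cl{\S_X}$ is generated by $\cl{\S_X}$ together with a canonical function $g:(\Delta,c_1,\ldots,c_k)^m\To\Delta$ --- where $k$ and $m$ are bounded constants, since $n(\Gamma)=2$ and $\Gamma$ has only two $2$-types --- and then to show by a small case analysis that $\C$ must coincide with either $\C_{\mathrm{const}}$ or $\C_{\mathrm{inj}}$.

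The first step is to verify that $\C_{\mathrm{const}} := \cl{\S_X\cup\{c\}}$ (for any constant operation $c$) and $\C_{\mathrm{inj}} := \cl{\S_X\cup\{p\}}$ (for any binary injection $p$) are minimal closed clones above $\cl{\S_X}$, independent of the choice of $c$ respectively $p$. Independence holds because all constants lie in one $\S_X$-orbit, and because any two binary injections agree on any finite subset of $X^2$ up to conjugation by permutations via a back-and-forth using that $X$ is infinite. For minimality of $\C_{\mathrm{const}}$, its members outside $\cl{\S_X}$ are constants, and one constant together with $\S_X$ generates all of them. For minimality of $\C_{\mathrm{inj}}$, all its elements are injective, and any non-essentially-unary injective $f\in\C_{\mathrm{inj}}$ yields the essentially binary injection $h(x,y) := f(x,y,\ldots,y)$, which regenerates $\C_{\mathrm{inj}}$.

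The second step is to analyze the canonical generator $g$ in two cases. If $g$ identifies two distinct $m$-tuples of inputs --- equivalently, if some canonical behavior sends a tuple of distinct inputs to a tuple containing an equality --- then one aligns these two identified tuples via essentially unary injections in $\cl{\S_X}$ to produce a non-injective unary function $u\in\C$. The closed monoid generated by $\S_X$ and $u$ then contains a constant: for each finite $F\subseteq X$ of size $n$, iteratively precomposing $u$ with a permutation mapping two current image elements into a non-trivial fibre of $u$ reduces the image size by one in each step, and after $n-1$ steps the composite is constant on $F$; a diagonal pointwise limit, normalized by postcomposition with permutations, produces a constant in $\C$. Hence $\C \supseteq \C_{\mathrm{const}}$, so minimality forces $\C = \C_{\mathrm{const}}$. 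Otherwise $g$ is injective on $\Delta^m$, and the substitution $h(x,y) := g(x,y,\ldots,y)$ gives an essentially binary injection in $\C$, so $\C = \C_{\mathrm{inj}}$.

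The main obstacle is the alignment step when the two identified $m$-tuples share some coordinates, where no single diagonal substitution of essentially unary injections can make one variable simultaneously take two different values at its shared positions. This is circumvented by a two-variable construction: one variable toggles the differing coordinates through essentially unary injections that separate the two values, while a second variable fixes the shared coordinates through essentially unary injections mapping to the common value. The result is an essentially binary non-injection, which a further diagonalization collapses to a unary non-injection; the descent to a constant then proceeds as above.
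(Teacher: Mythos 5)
The paper does not actually prove this theorem --- it is stated as a quotation of Bodirsky and K\'ara~\cite{ecsps} (see also~\cite{BodChenPinsker}), so there is no in-paper proof to compare against. Your framing (take $\Gamma = (X;\neq)$ as a reduct of $(\mathbb{Q};<)$, observe $\End(\Gamma) = \cl{\S_X}$, invoke Proposition~\ref{prop:canonicalMinimalClonesAboveEnd}) is a reasonable way to put the result into the paper's Ramsey/canonical-function framework. The verification that $\C_{\mathrm{const}}$ and $\C_{\mathrm{inj}}$ are each minimal is slightly imprecise as written (projections in $\C_{\mathrm{inj}}$ are not injective, and $h(x,y):=f(x,y,\ldots,y)$ can be essentially unary if $f$ does not depend on its first coordinate), but those points are easily patched.

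The genuine gap is in your case analysis of the canonical generator $g$, and specifically in the ``further diagonalization'' claim. You case on whether $g$ identifies two distinct $m$-tuples, i.e.\ on whether $g$ is injective as a map $\mathbb{Q}^m\to\mathbb{Q}$, and claim that in the non-injective case $\C$ must contain a constant. This is not the right dichotomy. The relevant distinction is whether $g$ ever identifies two tuples that differ \emph{in every coordinate} --- equivalently, whether $g$ fails to preserve $\neq$. A canonical $g$ can be non-injective yet still preserve $\neq$: the operation $\pp$ on $(\mathbb{Q};<)$ is exactly such a function (it is canonical over $(\mathbb{Q};<,0)^2$, not injective since $\pp(-1,3)=\pp(-1,5)$, yet it preserves $\neq$ because $\pp(a,b)=\pp(a',b')$ forces $a=a'$ or $b=b'$). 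For such a $g$, \emph{every} unary term $x\mapsto g(e_1(x),\ldots,e_m(x))$ with $e_1,\ldots,e_m\in\cl{\S_X}$ is injective, because the diagonal pair $\bigl((e_1(x_1),\ldots),(e_1(x_2),\ldots)\bigr)$ differs in all $m$ coordinates whenever $x_1\neq x_2$. Hence no unary non-injection --- and no constant --- lies in $\cl{\S_X\cup\{g\}}$.

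Concretely, your two-variable construction does produce a binary $v$ with $v(x_1,y_0)=v(x_2,y_0)$ for some $x_1\neq x_2$, but the diagonalization $w(x):=v(x,x)$ is \emph{not} a non-injection: the two arguments of $g$ hidden in $w(x_1)$ and $w(x_2)$ now differ in all coordinates, which is precisely the type of pair that $g$ does not collapse (since the identified pair $(a,b)$ agreed on the shared coordinates). So the descent to a constant breaks down exactly in the case you flagged as ``the main obstacle,'' and the fix you propose does not resolve it. What is actually needed in the $\neq$-preserving subcase is a separate argument that $\C\subseteq\Pol((X;\neq))$ together with $\C\supsetneq\cl{\S_X}$ forces $\C$ to contain a binary injection; this is the substantive combinatorial content of the Bodirsky--K\'ara proof, and the proposal does not supply it.
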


Observe that any constant operation and any binary injection on $X$ are canonical operations for the structure $(X;=)$.

We end this section with a last example which lists the minimal closed clones containing the self-embdeddings of the dense linear order $(\mathbb{Q};<)$. As with the random graph and the empty structure, since $(\mathbb{Q};<)$ is model-complete it follows that the monoid of self-embeddings of $(\mathbb{Q};<)$ is just the closure of $\Aut((\mathbb{Q};<))$ in $\mathbb{Q}^\mathbb{Q}$.

Let $\lex$ be a binary operation on $\mathbb{Q}$ such that $\lex(a,b) < \lex(a',b')$ iff either $a < a'$ or $a = a'$
and $b < b'$, for all $a,a',b,b'\in \mathbb{Q}$. Observe that $\lex$ is canonical as a function from $\mathbb{Q}^2$ to $\mathbb{Q}$. Next, let $\pp$ be an arbitrary binary operation on $\mathbb{Q}$ such that for all $a,a',b,b'\in \mathbb{Q}$ we have $\pp(a, b)\leq \pp(a', b')$ iff one of the following cases applies:
\begin{itemize}
    \item $a \leq 0$ and $a \leq a'$;
    \item $0 < a$, $0 <a'$, and $b \leq b'$.
\end{itemize}
The name of the operation $\pp$ stands for ``projection-projection'', since
the operation behaves as a projection to the first argument for negative first argument, and a projection to the second argument for positive first argument. Observe that $\pp$ is canonical if we add the origin as a constant to the language. Finally, define the \emph{dual} of an operation $f(x_1,\ldots,x_n)$ on $\mathbb{Q}$ by ${\leftrightarrow}(f({\leftrightarrow}(x_1),\ldots,{\leftrightarrow}(x_n)))$.

\begin{theorem}[Bodirsky and K\'{a}ra~\cite{tcsps-journal}]\label{thm:minimalDLOClones}
Let $(\mathbb{Q};<)$ be the order of the rationals, and let $\C$ be a minimal closed clone above $\cl{\Aut((\mathbb{Q};<))}$. Then $\C$ is generated by $\Aut((\mathbb{Q};<))$ together with one of the following operations:
\begin{itemize}
\item a constant operation;
\item the operation $\leftrightarrow$;
\item the operation $\circlearrowright$;
\item the operation $\lex$;
\item the operation $\pp$;
\item the dual of $\pp$.
\end{itemize}
\end{theorem}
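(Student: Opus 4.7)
The plan is to apply Proposition~\ref{prop:canonicalMinimalClonesAboveEnd} with $\Gamma=\Delta=(\mathbb{Q};<)$, which is ordered, Ramsey, homogeneous, and in a finite relational language (with $n(\Gamma)=2$). That proposition guarantees that every minimal closed clone $\C$ above $\cl{\Aut((\mathbb{Q};<))}=\End((\mathbb{Q};<))$ is generated over $\End((\mathbb{Q};<))$ by either (i) a canonical unary function $g:(\mathbb{Q};<,c_1,\ldots,c_{n(\Gamma)})\to(\mathbb{Q};<)$, or (ii) a canonical $m$-ary function $g:(\mathbb{Q};<,c_1,\ldots,c_m)^m\to(\mathbb{Q};<)$, where $m$ is bounded in terms of the number of $2$-types of $(\mathbb{Q};<)$. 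Since there are only finitely many $2$-types in $(\mathbb{Q};<)$ (namely $<,>,=$), both the arity $m$ and the number of candidate canonical behaviors are finite, and the classification reduces to a finite case analysis, up to showing which behaviors generate the same clone.

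First I would treat the unary case. A canonical function $g:(\mathbb{Q};<,c_1,c_2)\to(\mathbb{Q};<)$ is determined by its action on the finitely many order-types of pairs relative to the named constants $c_1,c_2$. A straightforward enumeration — splitting $\mathbb{Q}$ into the convex pieces cut out by the constants and asking for the induced behavior on each piece and between pieces — shows that any behavior that does \emph{not} already lie in $\End((\mathbb{Q};<))$ must either collapse an interval (yielding a constant operation), reverse the order globally (yielding $\leftrightarrow$), or reverse it on one half-line while permuting the halves (yielding $\circlearrowright$, where the cut plays the role of the point $\alpha$). Each of these generates a minimal closed clone since any further canonical function it generates falls into the same three behaviors up to $\Aut((\mathbb{Q};<))$.

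Next I would handle the higher-arity case, which is the main obstacle. Here one classifies canonical operations $g:(\mathbb{Q};<,c_1,\ldots,c_m)^m\to(\mathbb{Q};<)$ by recording, for each pair of tuples $(a^1,a^2)$, the order-types of the coordinate pairs $(a^1_i,a^2_i)$ together with their positions relative to the constants, and asking into which $2$-type in $(\mathbb{Q};<)$ the pair $(g(a^1),g(a^2))$ is sent. The canonicity constraint forces the answer to depend only on this finite data, and the further constraint that $g$ sit in a \emph{minimal} clone collapses the enumeration drastically: essentially $g$ must either be order-preserving in a ``lexicographic'' manner (yielding $\lex$, which is already canonical without any named constant) or degenerate one coordinate into a rule ``follow the first coordinate when it is negative, the second otherwise'' (yielding $\pp$, which requires naming the origin). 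The dual behavior, obtained by composing with $\leftrightarrow$, yields the dual of $\pp$; by Theorem~\ref{thm:cameron5} these dual patterns are not generated without $\leftrightarrow$, hence produce a genuinely different clone.

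Finally, for each of the six candidates listed I would check the two directions: (a) the clone it generates with $\Aut((\mathbb{Q};<))$ properly contains $\End((\mathbb{Q};<))$, verified by exhibiting a binary or unary relation preserved by $\Aut((\mathbb{Q};<))$ but not by $g$; and (b) that clone is minimal, proved by taking any $g'$ in it outside $\End((\mathbb{Q};<))$, using Lemma~\ref{lem:generatesCanonicalWithConstants} to replace $g'$ by a canonical function on an appropriate expansion, and checking from the enumeration above that the canonical reduct of $g'$ already generates $g$ modulo $\Aut((\mathbb{Q};<))$. The hardest step will be (b) in the binary case, as one must rule out a canonical intermediate operation lying strictly between $\Pol$ of one of the listed candidates and $\End((\mathbb{Q};<))$; separating $\lex$, $\pp$, and its dual from each other also requires constructing explicit invariant relations via the Galois correspondence of Theorem~\ref{thm:clones-pp} to certify that no two of them coincide.
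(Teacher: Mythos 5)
The survey states this theorem without proof, attributing it to~\cite{tcsps-journal}, so there is no internal argument to compare your proposal against. Your chosen framework --- invoke Proposition~\ref{prop:canonicalMinimalClonesAboveEnd} to reduce to canonical functions over an expansion of $(\mathbb{Q};<)$ by constants, then run a finite case analysis --- is exactly the method the survey advocates and is a sound outline.

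The trouble is that everything carrying real mathematical content is asserted rather than argued. In the unary case, the claimed ``straightforward enumeration'' \emph{is} the entire argument, and it is not straightforward: one must systematically dispose of all behaviors that are order-preserving on each of the intervals cut out by $c_1,c_2$ but permute those intervals, that collapse a pair straddling a cut to equality, that fix one interval pointwise while moving the others, and so on; there is also a mismatch to address, since the machinery produces canonical functions at rational cuts whereas $\circlearrowright$ cuts at an irrational $\alpha$, so one must argue that a canonical ``half-reversal'' at a rational point generates the same closed clone as $\circlearrowright$. In the binary case, saying that minimality ``collapses the enumeration drastically'' hides the crux: there are many more canonical binary behaviors than $\lex$, $\pp$ and its dual (the operations $\min$, $\mathrm{mi}$, $\mathrm{mx}$, $\mathrm{ll}$ visible in Figure~\ref{fig:tcsp} are also canonical), and for each one you must actually verify that it either lies in $\cl{\Aut((\mathbb{Q};<))}$ or generates a clone strictly containing one of the six minimal candidates. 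Your minimality step (b) likewise restates what must be proved: showing that a canonical reduct of an arbitrary $g'\in\C\setminus\cl{\Aut((\mathbb{Q};<))}$ generates the distinguished operation of $\C$ requires a second round of behavioral case analysis \emph{inside} each candidate clone, and the proposal gives no indication of how that analysis goes or why it terminates in the desired way. In short, the proposal correctly identifies the strategy and correctly names where the difficulty sits, but supplies none of the combinatorics that the theorem actually requires.
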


\section{Decidability of Definability}\label{sect:decidability}

We turn to another application of the ideas of the last sections. Consider the following computational problem for a structure $\Gamma$: Input are quantifier-free formulas $\phi_0,\ldots,\phi_n$ in the language of $\Gamma$ defining relations $R_0,\ldots,R_n$ on the domain of $\Gamma$, and the question is whether $R_0$ can be \emph{defined} from $R_1,\ldots,R_n$. As in Section~\ref{sect:reducts}, ``defined'' can stand for ``first-order defined'' or syntactic restrictions of this notion. We denote this computational problem by $\Exprep(\Gamma)$ and $\Exprpp(\Gamma)$ if we consider existential positive and primitive positive definability, respectively.

For \emph{finite} structures $\Gamma$ the problem $\Exprpp(\Gamma)$ is in co-NEXPTIME (and in particular decidable), and has recently shown to be co-NEXPTIME-hard~\cite{Willard-cp10}. For infinite  structures $\Gamma$, the decidability of $\Exprpp(\Gamma)$ is not obvious. An algorithm for primitive positive definability has theoretical and practical consequences in the study of the computational complexity of CPSs (which we will consider in Section~\ref{sect:csp}).
It is motivated by the fundamental fact that expansions of structures $\Gamma$ by primitive positive relations do not change the complexity of $\Csp(\Gamma)$. On a practical side, it turns out that hardness of a CSP can usually be shown by presenting primitive positive definitions of relations for which it is known that the CSP is hard. Therefore, a procedure that decides primitive positive definability of a given relation might be a useful tool to determine the computational complexity of CSPs.

Using the methods of the last sections, one can show decidability of $\Exprep(\Gamma)$ and $\Exprpp(\Gamma)$ for certain infinite structures $\Gamma$. The following uses the same terminology as in~\cite{MacphersonSurvey}.

\begin{definition}
    We say that a class $\mathcal C$ of finite $\tau$-structures (or a $\tau$-structure with age $\mathcal C$) is \emph{finitely bounded}
    if there exists a finite set of finite $\tau$-structures $\mathcal F$ such for all finite $\tau$-structures $A$ we have 
    that $A \in \mathcal C$ iff no structure from $\mathcal F$
    embeds into $A$.
\end{definition}

\begin{thm}[Bodirsky, Pinsker, Tsankov~\cite{BodPinTsa}]\label{thm:decidability}
Let $\Delta$ be ordered, Ramsey, homogeneous, and of finite language, and let $\Gamma$ be a finite language reduct of $\Delta$.
Then $\Exprep(\Gamma)$ and $\Exprpp(\Gamma)$ are decidable.
\end{thm}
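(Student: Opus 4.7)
The plan is to combine the Galois correspondences from Section~\ref{sect:reducts} with the canonization results from Section~\ref{sect:minimalfunctions} to reduce each decision problem to a finite search over canonical behaviors.

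First I would reduce to a question about endomorphisms and polymorphisms. Given quantifier-free formulas $\phi_0,\ldots,\phi_n$ over the language of $\Gamma$ defining relations $R_0,\ldots,R_n$, let $\Gamma'$ be the reduct of $\Delta$ whose relations are $R_1,\ldots,R_n$; since $\Delta$ has quantifier elimination (being homogeneous in a finite language), each $R_i$ admits a computable quantifier-free definition in $\Delta$. By Theorem~\ref{thm:monoids-expos}, $R_0$ is existential positive definable from $R_1,\ldots,R_n$ iff every $f\in\End(\Gamma')$ preserves $R_0$; Theorem~\ref{thm:clones-pp} gives the analog with $\Pol(\Gamma')$ for primitive positive definability. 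So it suffices to decide whether $\End(\Gamma')$ (resp.\ $\Pol(\Gamma')$) contains a function that violates $R_0$.

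Next I would canonize the hypothetical violation. If $f\in\End(\Gamma')$ sends a tuple $(c_1,\ldots,c_k)\in R_0$ out of $R_0$ (where $k$ is the arity of $R_0$), Lemma~\ref{lem:generatesCanonicalWithConstants} yields a function $g$ generated by $f$ and $\Aut(\Delta)$ that agrees with $f$ on $\{c_1,\ldots,c_k\}$ and is canonical as a map $(\Delta,c_1,\ldots,c_k)\to\Delta$. Since $\Aut(\Delta)$ preserves the $R_i$ (as they are reducts of $\Delta$) and $f$ preserves $R_1,\ldots,R_n$ by hypothesis, so does $g$, while $g$ still maps $(c_1,\ldots,c_k)$ outside $R_0$. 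For polymorphisms, Proposition~\ref{prop:canonicalMinimalClones} plays the analogous role, applied to a minimal closed clone above $\Pol(\Gamma'\cup\{R_0\})$ contained in $\Pol(\Gamma')$; this bounds the arity of the canonical violating polymorphism in terms of the (finitely many) relevant types.

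The residual search is then effective. By Theorem~\ref{thm:RN} the number of types of $k$-tuples in $\Delta$ is finite, and, using quantifier elimination, a canonical behavior is specified by a finite map from $n(\Delta)$-types of the expanded structure to $n(\Delta)$-types of $\Delta$. For each candidate behavior $B$ one verifies (i) that $B$ is realized by some actual function; (ii) that $B$ preserves each $R_i$ for $i\ge 1$ (inspect the quantifier-free definition of $R_i$ in $\Delta$ against the prescribed type-images); and (iii) that $B$ violates $R_0$ at the constants (a single type comparison). The algorithm accepts iff no violating behavior survives the finite list of candidates. The main obstacle I expect is step (i): formulating the correct finite consistency conditions on a behavior and showing that consistency implies realizability. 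This is where the Ramsey property and homogeneity in a finite language do the work, via a \Fresse-style amalgamation of canonical pieces on increasing finite substructures of $\Delta$; Proposition~\ref{prop:addingConstantsPreservesRamsey} ensures the construction survives in the presence of the constants. Once realizability is treated as a black box, all remaining checks are patently finite and effective, so $\Exprep(\Gamma)$ and $\Exprpp(\Gamma)$ are decidable.
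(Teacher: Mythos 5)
Your high-level strategy matches the paper's: translate the question via the Galois connections of Theorems~\ref{thm:monoids-expos} and \ref{thm:clones-pp} into the existence of a violating endomorphism (resp.\ polymorphism), then canonize and search the finitely many complete behaviors. The canonization step is also essentially what the paper does, and your use of Proposition~\ref{prop:canonicalMinimalClones} to bound the arity $m$ and number of constants $k$ in the polymorphism case (which is needed so that the search is finite) is a correct way to supply a detail the paper leaves implicit.

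The gap is exactly where you say you expect trouble, namely step~(i), and your proposed remedy does not close it. Deciding whether a candidate complete behavior $B$ is realized by an actual canonical function on $\Delta$ cannot be done from the Ramsey property and homogeneity alone. A Fra\"iss\'e/compactness argument shows that a \emph{consistent} behavior is globally realized, but it does not give an algorithm for deciding consistency: to check whether a finite configuration of type-images prescribed by $B$ can actually occur in $\Delta$, one needs an effective membership test for the age of $\Delta$, and there are homogeneous finite-language structures for which no such test exists. The paper closes this gap by invoking the additional hypothesis, introduced just before the theorem, that $\Delta$ be \emph{finitely bounded}: the finite set of forbidden substructures yields a decision procedure for the age of $\Delta$, and hence for whether a given behavior really arises from a function. (The theorem's statement in the survey omits this hypothesis, but the proof sketch uses it explicitly and the underlying reference assumes it; without some such effectivity hypothesis on the age the argument does not go through.) So you should add finite boundedness to your hypotheses and use it to make step~(i) effective; once that is done, the remaining checks in your outline are indeed finite and correct.
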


Examples of structures $\Delta$ that satisfy the assumptions
of Theorem~\ref{thm:decidability} are ${\dlo}$,
the Fra\"{i}ss\'{e} limit of ordered finite graphs (or tournaments~\cite{RamseyClasses}), the Fra\"{i}ss\'{e} limit of
finite partial orders with a linear extension~\cite{RamseyClasses}, the homogeneous universal `naturally ordered' $C$-relation~\cite{BodirskyPiguet}, just to name a few. CSPs for structures that are definable in such structures are abundant
in particular for qualitative reasoning calculi in Artificial Intelligence.

We want to point out that that decidability of primitive positive definability is already non-trivial when $\Gamma$ is trivial from a model-theoretic perspective: for the case that $\Gamma$ is the structure $(X; =)$ (where $X$ is countably infinite), the decidability of $\Exprpp(\Gamma)$ has been posed as an open problem in~\cite{BodChenPinsker}. Theorem~\ref{thm:decidability} solves this problem,
since $(X;=)$ is isomorphic to a reduct of the structure $({\mathbb Q}; <)$, which is clearly finitely bounded, homogeneous, 
ordered, and Ramsey.

The proof of Theorem~\ref{thm:decidability} goes along the following lines, and is based on the results of the last sections. We outline the algorithm for $\Exprpp(\Gamma)$; the proof for $\Exprep(\Gamma)$ is a subset. So the input are formulas $\phi_0,\ldots,\phi_n$ defining relations $R_0,\ldots,R_n$, and we have to decide whether $R_0$ has a primitive positive definition from $R_1,\ldots, R_n$. Let $\Theta$ be the structure which has $R_1,\ldots,R_n$ as its relations. By Theorem~\ref{thm:clones-pp}, $R_0$ is not primitive positive definable from $R_1,\ldots, R_n$ if and only if there is a finitary function $f\in\Pol(\Theta)$ which violates $R_0$. By the ideas of the last section, such a polymorphism can be chosen to be canonical as a function from $(\Delta,c_1,\ldots,c_k)^m$ to $\Delta$, where $c_i\in\Delta$. Such canonical functions are essentially finite objects since they can be represented as functions on types. Therefore, the algorithm can then check for a given canonical function whether it is a polymorphism of $\Theta$ and whether it violates $R_0$. Also, $k$ and $m$ can be calculated from the input, and so there are only finitely many complete behaviors to be checked. Finally, the additional assumption that $\Delta$ be finitely bounded allows the algorithm to check whether a function on types really comes from a function on $\Delta$. We refer to~\cite{BodPinTsa} for details.

\section{Interpretability}
\label{sect:interpret}
Many $\omega$-categorical structures can be derived from
other $\omega$-categorical structures via
first-order interpretations. In this section we will discuss the fact
already mentioned in the introduction
that bi-interpretations can be used to transfer the Ramsey
property from one structure to another.
A special type of
interpretations, called \emph{primitive positive interpretations},
will become important in Section~\ref{sect:csp}.
The definition of interpretability we use is standard, and
follows~\cite{HodgesLong}.


When $\Delta$ is a structure with signature $\tau$,
and $\delta(x_1,\dots,x_k)$ is a first-order $\tau$-formula with the $k$ free variables $x_1,\dots,x_k$, we write $\delta(\Delta^k)$ for the
$k$-ary relation that is defined by $\delta$ over $\Delta$.

\begin{definition}
A relational $\sigma$-structure $\Gamma$ has a \emph{(first-order) interpretation} in a $\tau$-structure $\Delta$ if there exists a natural number $d$, called the \emph{dimension} of the interpretation, and
\begin{itemize}
\item a $\tau$-formula $\delta(x_1, \dots, x_d)$ -- called \emph{domain formula},
\item for each $k$-ary relation symbol $R$ in $\sigma$ a $\tau$-formula $\phi_R(\overline x_1, \dots, \overline x_k)$ where the $\overline x_i$ denote disjoint $d$-tuples of distinct variables -- called the \emph{defining formulas},
\item a $\tau$-formula $\phi_=(x_1,\dots,x_d,y_1,\dots,y_d)$, and
\item a surjective map $h: \delta(\Delta^d) \rightarrow \Gamma$ -- called \emph{coordinate map},
\end{itemize}
such that for all relations $R$ in $\Gamma$ and all tuples $\overline a_i \in \delta(\Delta^d)$
\begin{align*}
(h(\overline a_1), \dots, h(\overline a_k)) \in R \;
& \Leftrightarrow \;
\Delta \models \phi_R(\overline a_1, \dots, \overline a_k) \; , \text{ and } \\
h(\overline a_1)=h(\overline a_2)
& \Leftrightarrow \;  \Delta \models \phi_=(\overline a_1,\overline a_2) \; .
\end{align*}
\end{definition}
If the formulas $\delta$, $\phi_R$, and $\phi_=$ are all primitive positive, we say that $\Gamma$
has a \emph{primitive positive interpretation} in $\Delta$; many primitive positive interpretations can be found in Section~\ref{sect:csp}.
We say that $\Gamma$ is \emph{interpretable in} $\Delta$ \emph{with finitely many parameters} if there are $c_1,\dots,c_n \in \Delta$
such that $\Gamma$ is interpretable in the expansion of $\Delta$ by the singleton relations $\{c_i\}$ for all $1 \leq i \leq n$.
First-order \emph{definitions} are a special case of interpretations:
a structure $\Gamma$ is \emph{(first-order) definable} in $\Delta$
if $\Gamma$ has an interpretation in $\Delta$ of dimension one where
the domain formula is logically equivalent to true.

\begin{lemma}[see e.g. Theorem 7.3.8 in~\cite{HodgesLong}]
\label{lem:interpret}
If $\Delta$ is an $\omega$-categorical structure, then every
structure $\Gamma$ that is first-order interpretable in $\Delta$
with finitely many parameters is $\omega$-categorical as well.
\end{lemma}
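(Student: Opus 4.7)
The plan is to apply the Ryll-Nardzewski theorem (Theorem~\ref{thm:RN}) in both directions: use it to extract finiteness of the number of types from the $\omega$-categoricity of $\Delta$, and use that finiteness, transported across the interpretation, to conclude $\omega$-categoricity of $\Gamma$.

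First I would reduce to the parameter-free case. If $\Gamma$ is interpretable in $\Delta$ with parameters $c_1,\dots,c_n$, I replace $\Delta$ by its expansion $\Delta' := (\Delta,c_1,\dots,c_n)$ and observe that $\Delta'$ is still $\omega$-categorical: by Ryll-Nardzewski, $\Delta$ has finitely many $(n+m)$-types for each $m$, and the $m$-types of $\Delta'$ correspond to the $(n+m)$-types of $\Delta$ extending $\tp(c_1,\dots,c_n)$, so they are finite in number as well. Hence it suffices to prove the lemma for parameter-free interpretations, with $\Delta$ replaced by $\Delta'$.

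Next I would translate formulas through the interpretation. A standard induction on the complexity of a $\sigma$-formula $\psi(x_1,\dots,x_n)$ shows that there exists a $\tau$-formula $\psi^{\ast}(\overline y_1,\dots,\overline y_n)$, where each $\overline y_i$ is a $d$-tuple of variables, such that for all $\overline a_1,\dots,\overline a_n \in \delta(\Delta^d)$,
\begin{equation*}
\Gamma \models \psi(h(\overline a_1),\dots,h(\overline a_n)) \;\Longleftrightarrow\; \Delta \models \psi^{\ast}(\overline a_1,\dots,\overline a_n).
\end{equation*}
The atomic case is handled by the defining formulas $\phi_R$ and by $\phi_=$, Boolean connectives commute with the translation, and an existential quantifier $\exists x_i$ in $\sigma$ translates to a block of $d$ existentials relativized to $\delta$.

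Now I would bound the number of $n$-types of $\Gamma$. Let $(g_1,\dots,g_n)$ be an $n$-tuple in $\Gamma$, choose preimages $\overline a_i \in \delta(\Delta^d)$ under $h$, and concatenate to an $(nd)$-tuple $\overline a$ in $\Delta$. By the translation above, the $\sigma$-type of $(g_1,\dots,g_n)$ in $\Gamma$ is completely determined by the $\tau$-type of $\overline a$ in $\Delta$. Since $\Delta$ is $\omega$-categorical, by Theorem~\ref{thm:RN} it has only finitely many $(nd)$-types, so $\Gamma$ has only finitely many $n$-types. As $\Gamma$ is countable (being a surjective image of a subset of $\Delta^d$), one more application of Theorem~\ref{thm:RN} in the converse direction yields that $\Gamma$ is $\omega$-categorical.

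The only step that requires genuine care, rather than being a direct appeal to definitions, is the translation lemma together with the verification that the formula $\psi^{\ast}$ is well-defined on representatives, i.e. that its truth value does not depend on the choice of preimages $\overline a_i$ of $g_i$ under $h$; this is where the formula $\phi_=$ and the hypothesis that $h$ is a surjection onto $\Gamma$ enter, and where a small inductive argument must be carried out carefully. Everything else is a routine counting of types through the Ryll-Nardzewski theorem.
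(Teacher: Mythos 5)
Your proof is correct and is essentially the standard argument; the paper itself gives no proof and simply cites Theorem 7.3.8 of Hodges, whose proof proceeds exactly as you describe: reduce to the parameter-free case, use the reduction (translation) lemma to transport formulas from $\Gamma$ to $\Delta$, and count types via Ryll-Nardzewski in both directions. One small remark: the ``well-definedness on representatives'' you flag at the end is not really a separate verification needing $\phi_=$ — once the translation lemma is established in the form you state (an equivalence holding for \emph{all} tuples $\overline a_i \in \delta(\Delta^d)$, not just chosen ones), independence of the choice of preimage is an immediate consequence, since $h(\overline a_i) = h(\overline a_i')$ forces the left-hand sides to agree. The role of $\phi_=$ is confined to the atomic-equality case of the induction proving the translation lemma.
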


The following nicely describes interpretability between structures in terms of the (topological)
automorphism groups of the structures.

\begin{theorem}[Ahlbrandt and Ziegler~\cite{AhlbrandtZiegler}; also see Theorem 5.3.5 and 7.3.7 in~\cite{HodgesLong}]
  Let $\Delta$ be an $\omega$-categorical structure with at least two
  elements. Then a structure
  $\Gamma$ has a first-order interpretation in $\Delta$
  if and only if there is a continuous
  group homomorphism $f: \Aut(\Delta) \to \Aut(\Gamma)$ such that
  the image of $f$ has finitely many orbits in its action on $\Gamma$.
\end{theorem}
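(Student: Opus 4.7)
The plan is to prove the two implications separately, with both relying crucially on the Ryll-Nardzewski theorem (Theorem~\ref{thm:RN}) to ensure that every $\Aut(\Delta)$-invariant relation on a finite power of $\Delta$ is $\emptyset$-definable.

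For the forward direction, suppose $\Gamma$ has a first-order interpretation in $\Delta$ given by a coordinate map $h\colon \delta(\Delta^d)\To\Gamma$ together with formulas $\phi_R$ and $\phi_=$. I define $f\colon\Aut(\Delta)\To\Aut(\Gamma)$ by $f(\alpha)(h(\bar a)) := h(\alpha(\bar a))$, where $\alpha$ is applied coordinatewise. Well-definedness and injectivity of $f(\alpha)$ follow from $\alpha$ preserving $\phi_=$; surjectivity follows because $\alpha$ preserves $\delta$; and $f(\alpha)$ preserves each relation of $\Gamma$ because $\alpha$ preserves the corresponding $\phi_R$. The homomorphism property is immediate from the coordinatewise definition. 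For continuity at the identity, any basic open neighborhood of $\id$ in $\Aut(\Gamma)$ is determined by pointwise fixation of a finite set $F\subseteq\Gamma$; lifting each element of $F$ through $h$ produces a finite subset $F'\subseteq\Delta$ whose pointwise stabilizer in $\Aut(\Delta)$ is mapped by $f$ into the stabilizer of $F$. Finally, Theorem~\ref{thm:RN} guarantees only finitely many orbits of $\Aut(\Delta)$ on $\Delta^d$, hence on $\delta(\Delta^d)$, and thus on $\Gamma$ via $h$.

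For the backward direction, I take a continuous $f\colon\Aut(\Delta)\To\Aut(\Gamma)$ such that $f(\Aut(\Delta))$ has finitely many orbits on $\Gamma$, with representatives $x_1,\dots,x_n$. The stabilizer $K_j := \{g\in\Aut(\Delta) : f(g)(x_j)=x_j\}$ is open by continuity, so it contains the pointwise stabilizer $\Aut(\Delta/\bar a_j)$ of some finite tuple $\bar a_j\in \Delta^{d_j}$; I pad all such tuples to a common length $d$. Writing $P_j\subseteq\Delta^d$ for the $\Aut(\Delta)$-orbit of $\bar a_j$, I define the coordinate map $h\colon P_1\cup\dots\cup P_n\To\Gamma$ by $h(\sigma(\bar a_j)) := f(\sigma)(x_j)$. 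This is well defined thanks to $\Aut(\Delta/\bar a_j)\subseteq K_j$, and surjective by the choice of representatives. I take $\delta$ to be a formula defining $P_1\cup\dots\cup P_n$, which is possible because each $P_j$ is a single $\Aut(\Delta)$-orbit and therefore $\emptyset$-definable by Theorem~\ref{thm:RN}. The key observation is that the pullback relations $\{(\bar b,\bar c) : h(\bar b)=h(\bar c)\}$ and $\{(\bar b_1,\dots,\bar b_k) : (h(\bar b_1),\dots,h(\bar b_k))\in R\}$ are invariant under the diagonal action of $\Aut(\Delta)$, since every $f(\sigma)$ is an automorphism of $\Gamma$; hence each is a finite union of $\Aut(\Delta)$-orbits on an appropriate finite power of $\Delta$, and Theorem~\ref{thm:RN} supplies the required defining formulas $\phi_=$ and $\phi_R$.

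The main obstacle is the backward direction, where I need to translate purely topological-algebraic data (continuity of $f$ together with the finite orbit condition) into concrete first-order formulas. The crux is the dual use of Ryll-Nardzewski: openness of each $K_j$ furnishes the coordinate tuples $\bar a_j$, while $\Aut(\Delta)$-invariance of the pullback relations on $\Delta^N$ turns them into first-order formulas. Managing multiple orbits through a uniform dimension $d$ is routine but requires care so that the assembled $\delta$, $\phi_=$, and $\phi_R$ together with $h$ form a genuine interpretation in the sense defined in the excerpt.
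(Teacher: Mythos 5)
The paper cites this theorem from Ahlbrandt--Ziegler and Hodges without proof, so there is nothing in the paper to compare against; the student's plan is essentially the standard textbook argument. The forward direction is correct.

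The backward direction, however, has a genuine gap, and it is exactly the point where the hypothesis ``$\Delta$ has at least two elements'' must enter (and the student never uses it, which is a red flag). You choose tuples $\bar a_j$ with $\Aut(\Delta/\bar a_j)\subseteq K_j$, set $P_j$ to be the $\Aut(\Delta)$-orbit of $\bar a_j$, and define $h(\sigma(\bar a_j)) := f(\sigma)(x_j)$ on $P_1\cup\dots\cup P_n$. You verify within-orbit well-definedness, but nothing prevents two of the chosen tuples $\bar a_i,\bar a_j$ (for $i\neq j$) from lying in the same $\Aut(\Delta)$-orbit. If $\sigma(\bar a_i)=\tau(\bar a_j)$, then your two prescriptions for $h$ at this point are $f(\sigma)(x_i)$ and $f(\tau)(x_j)$; these lie in the $f(\Aut(\Delta))$-orbits of $x_i$ and $x_j$ respectively, which are distinct by the choice of representatives, so $h$ is not well-defined. (This is easy to trigger: if all the $K_j$ happen to equal one and the same open subgroup, the same $\bar a_j$ works for every $j$.)

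The standard repair uses the two-element hypothesis. Pick $b\neq c$ in $\Delta$ and replace each $\bar a_j\in\Delta^{d}$ by the tuple $\bigl(\underbrace{b,\dots,b}_{j-1},c,\underbrace{b,\dots,b}_{n-j},\bar a_j\bigr)\in\Delta^{n+d}$. The position of the single coordinate unequal to the rest among the first $n$ entries is recorded in the quantifier-free type of the tuple, so the $n$ modified tuples lie in pairwise distinct $\Aut(\Delta)$-orbits; and the pointwise stabilizer only shrinks under lengthening, so it remains inside $K_j$. With this modification the orbits $P_1,\dots,P_n$ are pairwise disjoint, $h$ is well-defined, and the rest of your argument (invariance of the pullback relations plus Ryll--Nardzewski to obtain $\delta$, $\phi_=$, and $\phi_R$) goes through as written.
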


Note that if $\Gamma_2$ has a $d$-dimensional interpretation $I$ in $\Gamma_1$, and $\Gamma_3$
has an $e$-dimensional interpretation $J$ in $\Gamma_2$,
then $\Gamma_3$ has a natural $ed$-dimensional interpretation in
$\Gamma_1$, which we denote by $J \circ I$.
To formally
describe $J \circ I$, suppose that the signature of $\Gamma_i$
is $\tau_i$ for $i = 1,2,3$, and that
$I = (d,\delta,(\phi_R)_{R \in \tau_2}, \phi_=,h)$
where $d$ is the dimension, $\delta$ the domain formula,
$\phi_=$ and $(\phi_R)_{R \in \tau_2}$ the interpreting relations,
and $h$ the coordinate map.
Similarly,
let $J = (e,\gamma,(\psi_R)_{R \in \tau_3}, \psi_=,g)$.
We use the following.

\begin{lemma}[Theorem 5.3.2 in~\cite{HodgesLong}]
Let $\Gamma_1,\Gamma_2, I$ as in the preceding paragraph.
Then
for every first-order $\tau_2$-formula $\phi(x_1,\dots,x_k)$ there is
$\tau_1$-formula $$\phi^I(x^1_1,\dots,x^d_1,\dots,x^1_k,\dots,x_k^d)$$ such that for all $a_1,\dots,a_k \in \delta((\Gamma_1)^d)$
$$ \Gamma_2 \models \phi(h(a_1),\dots,h(a_k)) \;
\Leftrightarrow \; \Gamma_1 \models \phi^I(a_1,\dots,a_k) \; .$$
\end{lemma}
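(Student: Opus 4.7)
The plan is to proceed by induction on the structure of the $\tau_2$-formula $\phi$, constructing $\phi^I$ uniformly from the formulas $\phi_R$, $\phi_=$, and $\delta$ provided by the interpretation $I$. Along the way one must also verify that the truth value of $\phi^I(a_1,\dots,a_k)$ depends only on the equivalence classes of the $a_j$ under $\phi_=$, since the coordinate map $h$ need not be injective; this is best stated and proved as a parallel invariance claim within the same induction.

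For the base case, if $\phi$ is the atomic formula $R(x_1,\dots,x_k)$ for some $R\in\tau_2$, then set $\phi^I := \phi_R(\overline x_1,\dots,\overline x_k)$; if $\phi$ is $x_i = x_j$, set $\phi^I := \phi_=(\overline x_i,\overline x_j)$. Both clauses satisfy the stated equivalence on the nose, directly from the definition of an interpretation. For the inductive step on Boolean connectives, put $(\neg\psi)^I := \neg\psi^I$ and $(\psi_1 \wedge \psi_2)^I := \psi_1^I \wedge \psi_2^I$; the equivalence is immediate from the inductive hypothesis. For the existential quantifier, where $\phi(\overline x_1,\dots,\overline x_k) = \exists y\, \psi(y,x_1,\dots,x_k)$, the right definition is
\[
\phi^I := \exists y^1\cdots\exists y^d\,\bigl(\delta(y^1,\dots,y^d) \wedge \psi^I(y^1,\dots,y^d,\overline x_1,\dots,\overline x_k)\bigr),
\]
with a dual construction for $\forall$. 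The forward direction uses surjectivity of $h$: if $\Gamma_2\models \exists y\,\psi(y,h(\overline a_1),\dots,h(\overline a_k))$, pick a witness $b\in\Gamma_2$ and then some $\overline b\in\delta(\Gamma_1^d)$ with $h(\overline b)=b$; the inductive hypothesis applied to $\psi$ gives $\Gamma_1\models\psi^I(\overline b,\overline a_1,\dots,\overline a_k)$. The backward direction just feeds the witness $\overline b$ through $h$.

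The main obstacle, which is the only subtle point, is the invariance of $\phi^I$ under $\phi_=$: I need to know that if $\overline a_j$ and $\overline a_j'$ satisfy $\phi_=(\overline a_j,\overline a_j')$, then $\phi^I(\overline a_1,\dots,\overline a_k) \leftrightarrow \phi^I(\overline a_1',\dots,\overline a_k')$ holds in $\Gamma_1$. For atomic $\phi$ this is part of the definition of an interpretation (the defining formulas respect $\phi_=$ since they correspond to well-defined relations on $\Gamma_2$). The Boolean cases are trivial. The quantifier case requires that $\phi_=$ be an equivalence relation on $\delta(\Gamma_1^d)$, which again is forced by the fact that it models genuine equality on $\Gamma_2$ via $h$; then a straightforward swapping of witnesses handles the existential case. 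Once this invariance is in hand, the equivalence in the statement follows by a routine induction, and the construction of $\phi^I$ is effective in $\phi$.
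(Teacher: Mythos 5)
The paper does not actually prove this lemma; it cites it verbatim from Hodges (Theorem 5.3.2), so there is no paper proof to compare against. Your induction on the structure of $\phi$ — atomic cases handled by $\phi_R$ and $\phi_=$, Boolean connectives commuting with $(\cdot)^I$, quantifiers relativized to $\delta$ with the forward direction using surjectivity of $h$ — is precisely the standard argument, and it is correct.

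One remark: the ``parallel invariance claim'' you flag as the only subtle point is not actually needed anywhere in your induction. In the quantifier step, the forward direction uses only surjectivity of $h$ and the inductive hypothesis, and the backward direction only feeds $\overline b$ through $h$; neither step appeals to well-definedness of $\phi^I$ modulo $\phi_=$, and the atomic and Boolean cases manifestly do not either. The invariance you describe is a \emph{consequence} of the lemma, not a prerequisite: if $\Gamma_1\models\phi_=(\overline a_j,\overline a_j')$ then $h(\overline a_j)=h(\overline a_j')$ by the definition of an interpretation, hence $\Gamma_2\models\phi(\dots,h(\overline a_j),\dots)$ iff $\Gamma_2\models\phi(\dots,h(\overline a_j'),\dots)$ trivially, and the equivalence in the lemma transfers this to $\phi^I$. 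So the proof is correct, but the discussion of invariance is a detour; you can delete it and the argument stands as a clean single induction.
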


We can now define the interpretation $J \circ I$ as follows:
the domain formula $\eta$ is $\gamma^I$, and the defining formula for $R \in \tau_3$
is $(\psi_R)^I$.  The coordinate map is from $\eta((\Gamma_1)^{ed}) \rightarrow \Gamma_3$,
and defined by
$$(a^1_1,\dots,a^d_1,\dots,a^1_e,\dots,a^d_e) \; \mapsto \; g(h(a^1_1,\dots,a^d_1),\dots,h(a^1_e,\dots,a^d_e)) \; .$$

Two interpretations of $\Gamma$ in $\Delta$
with coordinate maps $h_1$ and $h_2$ are called \emph{homotopic}
if the relation $\{(\bar x,\bar y) \; | \; h_1(\bar x) = h_2(\bar y) \}$
is definable in $\Delta$.
The \emph{identity interpretation} of a structure $\Gamma$
is the 1-dimensional interpretation of $\Gamma$ in $\Gamma$ whose coordinate map is the identity.
Two structures $\Gamma$ and $\Delta$ are called \emph{bi-interpretable} if there is an interpretation $I$ of $\Gamma$ in $\Delta$ and an interpretation $J$ of $\Delta$ in $\Gamma$
such that both $I \circ J$ and $J \circ I$ are homotopic
to the identity interpretation (of $\Gamma$ and of $\Delta$, respectively).

\begin{theorem}[Ahlbrandt and Ziegler~\cite{AhlbrandtZiegler}]\label{thm:bi-interpret}
Two $\omega$-categorical structures $\Gamma$ and $\Delta$ are bi-interpretable if and only if $\Aut(\Gamma)$ and $\Aut(\Delta)$ are isomorphic as topological groups.
\end{theorem}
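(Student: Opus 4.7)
The plan is to promote the correspondence between interpretations and continuous group homomorphisms furnished by the preceding Ahlbrandt--Ziegler theorem to a correspondence between \emph{homotopy classes} of interpretations and continuous group homomorphisms. For any interpretation $I$ of $\Gamma$ in $\Delta$ with coordinate map $h : \delta(\Delta^d) \to \Gamma$, each $\alpha \in \Aut(\Delta)$ acts diagonally on $\Delta^d$, preserves $\delta(\Delta^d)$ and the equivalence relation defined by $\phi_=$, and hence descends through $h$ to an automorphism $\phi_I(\alpha)$ of $\Gamma$; this is precisely the continuous homomorphism produced by the preceding theorem. A direct unwinding of definitions shows that the assignment $I \mapsto \phi_I$ is functorial in the sense that $\phi_{J \circ I} = \phi_J \circ \phi_I$ whenever the composition is defined, and $\phi_{\id_\Gamma} = \id_{\Aut(\Gamma)}$.

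The technical heart of the proof will be the following \emph{homotopy criterion}: two interpretations $I_1, I_2$ of $\Gamma$ in $\Delta$ of the same dimension $d$ are homotopic if and only if $\phi_{I_1} = \phi_{I_2}$. For the forward direction, suppose $R := \{(\bar x,\bar y) : h_1(\bar x) = h_2(\bar y)\}$ is first-order definable in $\Delta$; then $R$ is invariant under the diagonal action of $\Aut(\Delta)$, and surjectivity of $h_1, h_2$ forces $\phi_{I_1}(\alpha)$ and $\phi_{I_2}(\alpha)$ to agree on every element of $\Gamma$. For the converse, I expect to spend the most care: a short calculation using the definition of $\phi_{I_i}$ shows that $\phi_{I_1} = \phi_{I_2}$ already implies that $R$ is $\Aut(\Delta)$-invariant. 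By $\omega$-categoricity and Ryll-Nardzewski (Theorem~\ref{thm:RN}), any $\Aut(\Delta)$-invariant relation on $\Delta^{2d}$ is a union of finitely many orbits, each of which is $\emptyset$-definable in $\Delta$; consequently $R$ itself is first-order definable, as required.

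Given this dictionary, the theorem is essentially formal. For the forward direction, if $\Gamma$ and $\Delta$ are bi-interpretable via $I$ and $J$, then by functoriality and the homotopy criterion $\phi_I \circ \phi_J = \phi_{I \circ J} = \phi_{\id_\Gamma} = \id_{\Aut(\Gamma)}$ and likewise $\phi_J \circ \phi_I = \id_{\Aut(\Delta)}$, so $\phi_I$ is a topological group isomorphism between $\Aut(\Delta)$ and $\Aut(\Gamma)$. For the backward direction, suppose $\phi : \Aut(\Delta) \to \Aut(\Gamma)$ is a topological isomorphism. Both $\phi$ and $\phi^{-1}$ are surjective continuous group homomorphisms, and their images trivially have finitely many orbits, since $\Aut(\Gamma)$ and $\Aut(\Delta)$ do by Ryll-Nardzewski. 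The preceding Ahlbrandt--Ziegler theorem therefore yields interpretations $I$ of $\Gamma$ in $\Delta$ and $J$ of $\Delta$ in $\Gamma$ with $\phi_I = \phi$ and $\phi_J = \phi^{-1}$. Then $\phi_{J \circ I} = \phi_J \circ \phi_I = \id_{\Aut(\Delta)} = \phi_{\id_\Delta}$, so the homotopy criterion delivers $J \circ I$ homotopic to $\id_\Delta$, and symmetrically $I \circ J$ homotopic to $\id_\Gamma$, exhibiting the bi-interpretation.
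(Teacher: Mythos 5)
The paper states this as a cited result of Ahlbrandt--Ziegler without supplying its own proof, so there is no in-paper argument to compare against; your proposal is the standard derivation and its overall shape is correct. The one genuine gap is in the backward direction: you conclude from ``the preceding Ahlbrandt--Ziegler theorem'' that the given topological isomorphism $\phi$ and its inverse $\phi^{-1}$ are realised as $\phi_I$ and $\phi_J$ for suitable interpretations $I$ and $J$. But that theorem, as displayed in the paper, is only an existence biconditional: $\Gamma$ is interpretable in $\Delta$ if and only if \emph{there exists} a continuous homomorphism $\Aut(\Delta) \to \Aut(\Gamma)$ whose image has finitely many orbits. It does not assert that a \emph{given} such homomorphism arises (even up to homotopy) as the induced map of some interpretation. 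Your deduction needs the sharper form of Hodges' Theorem~7.3.7 -- namely, that the assignment $I \mapsto \phi_I$ which you construct at the outset is essentially surjective onto the set of continuous homomorphisms with finitely-many-orbits image -- and you should state and use that strengthening explicitly, since it is not a formal consequence of the existence biconditional alone. Once that is in place the rest of your argument goes through.

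A smaller point: your homotopy criterion is announced for two interpretations ``of the same dimension $d$,'' but you then apply it to compare $J \circ I$ (dimension $de$) with $\id_\Delta$ (dimension $1$), and $I \circ J$ with $\id_\Gamma$. Fortunately your proof of the criterion never uses the common-dimension hypothesis: the relation $\{(\bar x,\bar y) : h_1(\bar x) = h_2(\bar y)\}$ simply lives on $\Delta^{d_1 + d_2}$, its $\Aut(\Delta)$-invariance is all that matters, and the Ryll-Nardzewski argument gives first-order definability of invariant relations in every arity. So drop the same-dimension restriction from the statement of the criterion; as written the lemma does not cover the case you need.
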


As a consequence of this result and Theorem~\ref{thm:KPT}
we obtain the following.

\begin{corollary}\label{cor:interpret-ramsey}
For ordered bi-interpretable
$\omega$-categorical homogeneous structures $\Gamma$ and $\Delta$,
one has the Ramsey property if and only if the other one has the Ramsey property.
\end{corollary}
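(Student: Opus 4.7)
The plan is to chain together Theorem~\ref{thm:KPT} with Theorem~\ref{thm:bi-interpret}, using as a bridge the fact that extreme amenability is a property purely of the topological group structure. The scaffolding of the argument should look as follows.

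First, I would observe that since $\Gamma$ and $\Delta$ are ordered, $\omega$-categorical and homogeneous, Theorem~\ref{thm:KPT} applies to both: $\Gamma$ has the Ramsey property iff $\Aut(\Gamma)$ is extremely amenable, and analogously for $\Delta$. So the corollary reduces to showing that $\Aut(\Gamma)$ is extremely amenable iff $\Aut(\Delta)$ is.

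Next, from the assumed bi-interpretability of $\Gamma$ and $\Delta$, Theorem~\ref{thm:bi-interpret} gives us an isomorphism $\Phi\colon \Aut(\Gamma) \to \Aut(\Delta)$ of topological groups. I would then record the small lemma (essentially by definition) that extreme amenability is a topological-group invariant: given any continuous action $\alpha\colon \Aut(\Delta) \times X \to X$ on a compact Hausdorff space $X$, the composition $\alpha \circ (\Phi \times \id_X)$ is a continuous action of $\Aut(\Gamma)$ on $X$ with the same set of fixed points; conversely for $\Phi^{-1}$. Thus the existence of a fixed point for every continuous action transfers in both directions.

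Combining these two transfers yields the equivalence: $\Gamma$ is Ramsey $\Leftrightarrow$ $\Aut(\Gamma)$ is extremely amenable $\Leftrightarrow$ $\Aut(\Delta)$ is extremely amenable $\Leftrightarrow$ $\Delta$ is Ramsey. There is no real obstacle here beyond verifying the topological-group invariance of extreme amenability, which is immediate from the definition; the depth of the statement is entirely borrowed from Theorems~\ref{thm:KPT} and~\ref{thm:bi-interpret}.
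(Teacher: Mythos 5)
Your proposal follows exactly the route the paper takes: combine Theorem~\ref{thm:bi-interpret} (Ahlbrandt--Ziegler) to obtain a topological group isomorphism between $\Aut(\Gamma)$ and $\Aut(\Delta)$, then transfer extreme amenability along it and apply Theorem~\ref{thm:KPT} in both directions. The paper states the corollary as an immediate consequence of these two theorems without spelling out the (straightforward) invariance of extreme amenability under topological group isomorphism, which you correctly identify and supply.
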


We give an example. This corollary can be used to
deduce that
an important structure studied in temporal reasoning in artificial intelligence has the Ramsey property. For the relevance
of this fact in constraint satisfaction, see Section~\ref{sect:csp}.

We have already mentioned that the age of $({\mathbb Q}; <)$
has the Ramsey property. Let $\Gamma$ be the structure
whose elements are pairs $(x,y) \in {\mathbb Q}^2$ with $x<y$,
representing \emph{intervals},
and which contains all binary relations $R$ over those intervals
such that the relation $\{(x,y,u,v) \; | \; ((x,y),(u,v)) \in R\}$ is first-order
definable in $({\mathbb Q}; <)$.
Hence, $\Gamma$ has a 2-dimensional interpretation $I$ in $({\mathbb Q}; <)$, whose coordinate map $h_1$ is the identity map on
$D := \{(x,y) \in {\mathbb Q}^2 \; | \; x<y\}$.

The structure $\Gamma$ is known
under the name \emph{Allen's Interval Algebra} in artificial intelligence.
We claim that its age has the Ramsey property. Using the homogeneity of $\dlo$, it is easy to show that $\Gamma$ is
 homogeneous as well. 
By Corollary~\ref{cor:interpret-ramsey}, it suffices to show that
$\Gamma$ and $({\mathbb Q}; <)$ are bi-interpretable.
We first show that $({\mathbb Q};<)$ has an interpretation $J$
in $\Gamma$. The coordinate map $h_2$ of $J$ maps $(x,y) \in D$ to $x$.
The formula $\phi_=(a,b)$ is $R_0(a,b)$ where $R_0$ is the binary relation $\{((x,y),(u,v)) \; | \; x=u\}$ from $\Gamma$.
The formula $\phi_<(a,b)$ is $R_1(a,b)$ where $R_1$ is the binary relation
$\{((x,y),(u,v)) \; | \; x<u\}$.

We prove that $J \circ I$ is homotopic to the identity interpretation of $({\mathbb Q};<)$ in $({\mathbb Q};<)$.
This holds since the relation $\{(x,y,u) \in {\mathbb Q}^3  \; | \; h_2(h_1(x,y))=u\}$ has the first-order
definition $x=u$ in $({\mathbb Q}; <)$.
To show that $I \circ J$ is homotopic to the identity interpretation,
observe that the relation
$\{(a,b,c) \in D^3 \; | $
$h_1(h_2(a),h_2(b))=c\}$ has the first-order
definition $R_0(a,c) \wedge R_3(b,c)$ in $\Gamma$, where $R_0$ is the binary relation from $\Gamma$ as defined above, and $R_3$ is the binary relation $\{((x,y),(u,v))\in\Gamma^2 \; | \; x=v \}$ from $\Gamma$. This  shows that $\Gamma$ and $({\mathbb Q}; <)$ are bi-interpretable.

\section{Complexity of Constraint Satisfaction}
\label{sect:csp}
In recent years, a considerable amount of research concentrated
on the computational complexity of $\Csp(\Gamma)$ for \emph{finite} structures $\Gamma$. Feder and Vardi~\cite{FederVardi} conjectured that for such $\Gamma$, the
problem $\Csp(\Gamma)$ is either in P, or NP-complete\footnote{By Ladner's theorem~\cite{ladner}, there are infinitely many complexity classes between P and NP, unless P=NP.}.
This conjecture has been fascinating researchers from various areas,
for instance from graph theory~\cite{HellNesetrilSurvey} and from finite model theory~\cite{FederVardi,KolaitisVardi,AtseriasBulatovDawar}.
It has been discovered that complexity classification questions
translate to fundamental questions in universal algebra~\cite{JBK,IMMVW}, so that
lately also many researchers in universal algebra started to work on questions that directly correspond to questions about the complexity of CSPs.

For arbitrary infinite structures $\Gamma$ it can be shown that there
are problems $\Csp(\Gamma)$ that are in NP, but neither in P nor NP-complete, unless P=NP. In fact, it can be shown that for every computational problem $\mathcal P$ there is an infinite structure $\Gamma$ such that $\mathcal P$ and $\Csp(\Gamma)$ are equivalent under polynomial-time Turing reductions~\cite{BodirskyGrohe}.
However, there are several classes of
infinite structures $\Gamma$ for which the complexity
of $\Csp(\Gamma)$ can be classified completely.

In this section we will see three such classes of computational problems; they all have the property that
\begin{itemize}
\item every problem in this class can be formulated as $\Csp(\Gamma)$ where $\Gamma$ has a first-order definition in a \emph{base structure} $\Delta$;
\item $\Delta$ is ordered homogeneous Ramsey with finite signature.
\end{itemize}
For all three classes, the classification result can be obtained by the
same method, which we describe in the following two subsections.

\subsection{Climbing up the lattice}\label{sect:climb}
Clearly, if we add relations to a structure $\Gamma$ with a finite
relational signature, then the CSP of the structure thus obtained is computationally at least as complex as the CSP of $\Gamma$.
On the other hand,
when we add a primitive positive definable relation to $\Gamma$,
then the CSP of the resulting structure has a polynomial-time reduction
to $\Csp(\Gamma)$. This is not hard to show, and has been observed
for finite domain structures in~\cite{JeavonsClosure}; the same proof
also works for structures over an infinite domain.

\begin{lemma}\label{lem:pp-reduce}
Let $\Gamma = (D; R_1,\dots,R_l)$ be a relational structure,
and let $R$ be a relation that has a primitive positive definition in $\Gamma$. Then the problems $\Csp(\Gamma)$ and
$\Csp(D; R,R_1,\dots,R_l)$ are equivalent under polynomial-time reductions.
\end{lemma}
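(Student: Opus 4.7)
The plan is to prove the two directions separately. One direction is essentially free: any instance of $\Csp(\Gamma)$ is \emph{a fortiori} an instance of $\Csp(D; R, R_1,\dots,R_l)$ (it simply happens not to use the relation symbol for $R$), and its truth value in $\Gamma$ agrees with its truth value in $(D; R, R_1,\dots,R_l)$ because the interpretations of $R_1,\dots,R_l$ coincide. So $\Csp(\Gamma)$ reduces to $\Csp(D; R, R_1,\dots,R_l)$ by the identity map on instances.

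For the nontrivial direction, I will unfold the primitive positive definition. By assumption there is a primitive positive $\{R_1,\dots,R_l\}$-formula
\[
    \psi(x_1,\dots,x_k) \; = \; \exists y_1\cdots\exists y_m.\ \bigwedge_{j=1}^{s} \alpha_j(x_1,\dots,x_k,y_1,\dots,y_m)
\]
with each $\alpha_j$ an atomic $\{R_1,\dots,R_l\}$-formula (equalities can be eliminated by variable identification), such that $\psi$ defines $R$ in $\Gamma$. Given an input instance $\phi = \exists z_1\cdots\exists z_n.\ \bigwedge_{t} \beta_t$ of $\Csp(D; R, R_1,\dots,R_l)$, I produce an instance $\phi'$ of $\Csp(\Gamma)$ as follows: leave every conjunct $\beta_t$ whose predicate is among $R_1,\dots,R_l$ unchanged, and replace every conjunct $\beta_t$ of the form $R(z_{i_1},\dots,z_{i_k})$ by a fresh copy of the body $\bigwedge_j \alpha_j(z_{i_1},\dots,z_{i_k},y_1^{(t)},\dots,y_m^{(t)})$, where the $y_u^{(t)}$ are new existentially quantified variables (disjoint between different occurrences of $R$). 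All introduced variables are added to the existential quantifier prefix.

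Correctness is immediate from the semantics of primitive positive formulas: since $R$ is defined by $\psi$ in $\Gamma$, we have $\Gamma \models \phi'$ iff $(D; R,R_1,\dots,R_l) \models \phi$. The size of $\phi'$ is at most $\lvert\phi\rvert \cdot c$ for a constant $c$ depending only on $\psi$ (in particular on $s$, $m$, and $k$), and the construction is clearly computable in polynomial (indeed linear) time, so we obtain a polynomial-time many-one reduction.

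I do not anticipate a serious obstacle: the argument is a standard substitution. The only point requiring mild care is to ensure that the witnesses for different occurrences of $R$ in $\phi$ are kept independent by using fresh variables $y_u^{(t)}$ for each occurrence $t$; if one instead reused the same witness variables across different atoms, the reduction would impose spurious equalities and cease to be correct. With that bookkeeping observed, the two polynomial-time reductions together give the claimed equivalence.
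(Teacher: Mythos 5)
Your proof is correct and is exactly the standard substitution argument the paper has in mind; the paper itself does not spell it out, remarking only that "this is not hard to show" and citing Jeavons, Cohen, and Gyssens for the finite-domain case. The one point you wave at but do not fully detail — elimination of equality atoms — is slightly more delicate than "variable identification" in the definition $\psi$ alone: if the pp-definition contains a conjunct $x_i = x_j$ between two of its free variables, this surfaces after substitution as an equality between instance variables $z_{i_a} = z_{i_b}$, and the identification must then be carried out in the whole instance $\phi'$ (propagating through all other conjuncts), not just locally; this is still polynomial-time and poses no real obstacle, but is worth being aware of.
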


When we study the CSPs of the reducts $\Gamma$
of a structure $\Delta$,
we therefore consider the lattice of reducts of $\Delta$
which are closed under primitive positive definitions (i.e., which contain all relations that are primitive positive definable from the reduct), and describe the border between tractability and NP-completeness in this lattice.  We remark that the reducts of $\Delta$ have, since we expand them by all primitive positive definable relations, infinitely many relations, and hence do not define a CSP; however, we consider $\Gamma$ tractable if and only if all structures obtained from $\Gamma$ by dropping all but finitely many relations have a tractable CSP. Similarly, we consider $\Gamma$ hard if there exists a structure
obtained from $\Gamma$ by dropping all but finitely many relations
that has a hard CSP. With this convention, it is interesting to determine the \emph{maximal tractable} reducts, i.e., those reducts closed under primitive positive definitions which do not contain any hard relation and which cannot be further extended without losing this property.

Recall the notion of a \emph{clone} from Section~\ref{sect:reducts}. By Theorem~\ref{thm:clones-pp}, the lattice of primitive positive closed reducts of $\Delta$ and the lattice of closed clones containing $\Aut(\Delta)$ are antiisomorphic via the mappings $\Gamma\mapsto\Pol(\Gamma)$ (for reducts $\Gamma$) and ${\mathcal C}\mapsto \Inv({\mathcal C})$ (for clones ${\mathcal C}$). We refer to the introduction of \cite{BodChenPinsker} for a detailed exposition of this well-known connection. Therefore, the maximal tractable reducts correspond to \emph{minimal tractable} clones, which are precisely the clones of the form $\Pol(\Gamma)$ for a maximal tractable reduct.

The proof strategy of the classification results presented in Sections~\ref{ssect:equality}, \ref{ssect:temporal}, and \ref{ssect:schaefer} is as follows.
We start by proving that certain reducts $\Gamma$ have an NP-hard CSP. How to show this, and how to find those `basic hard reducts' will be the topic of the next subsection.
Let $R$ be one of the relations from those hard reducts.
If $R$ does not have a primitive positive definition in $\Gamma$, then Theorem~\ref{thm:clones-pp} implies that
$\Gamma$ has a polymorphism $f$ that does not preserve $R$.
We are now in a similar situation
as in Section~\ref{sect:minimalfunctions}.
Introducing constants, we can show that $f$ generates an operation $g$ that still does not preserve $R$ but
is canonical with respect to the expansion of $\Gamma$ by constants. There are only
finitely many canonical behaviours that $g$ might have, and therefore we can start a combinatorial
analysis.
In the three classifications that follow, this strategy always leads to polymorphisms
that imply that CSP$(\Gamma)$ can be solved in polynomial time.

\subsection{Primitive positive interpretations, and adding constants}
Surprisingly, in all the classification results that we present in Sections~\ref{ssect:equality}, \ref{ssect:temporal}, and \ref{ssect:schaefer}, there is a single condition
that implies that a CSP is NP-hard. 
Recall that an interpretation is called \emph{primitive positive} if all formulas involved in the interpretation (the domain formula, the formulas $\phi_R$ and $\phi_=$) are primitive positive.
The relevance of primitive positive interpretations in constraint
satisfaction comes from the following fact, which is known for finite domain constraint satisfaction, albeit not using
the terminology of primitive positive interpretations~\cite{JBK}.
In the present form, it appears first in~\cite{BodirskySurvey}.

\begin{theorem}\label{thm:pp-interpret-hard}
Let $\Gamma$ and $\Delta$ be structures with finite relational signatures.
If there is a primitive positive interpretation of $\Gamma$
in $\Delta$, then there is a polynomial-time reduction from
$\Csp(\Gamma)$ to $\Csp(\Delta)$.
\end{theorem}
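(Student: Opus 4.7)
The natural approach is to exhibit an explicit polynomial-time many-one reduction that rewrites a primitive positive instance of $\Csp(\Gamma)$ into a primitive positive instance of $\Csp(\Delta)$ by ``unfolding'' every $\Gamma$-variable into a $d$-tuple of $\Delta$-variables, using the data of the interpretation. Let $I=(d,\delta,(\phi_R)_{R\in\sigma},\phi_=,h)$ be the given primitive positive interpretation, where $\sigma$ is the signature of $\Gamma$. Given a primitive positive $\sigma$-sentence $\psi=\exists x_1\cdots\exists x_n.\,C_1\wedge\cdots\wedge C_m$, I would build a primitive positive $\tau$-sentence $\psi'$ as follows: introduce $d$ fresh variables $y_i^1,\ldots,y_i^d$ for each $x_i$, write $\bar y_i:=(y_i^1,\ldots,y_i^d)$, conjoin the domain constraints $\delta(\bar y_i)$ for $1\le i\le n$, replace each atomic constraint $R(x_{i_1},\ldots,x_{i_k})$ by $\phi_R(\bar y_{i_1},\ldots,\bar y_{i_k})$, and replace every equality atom $x_{i_1}=x_{i_2}$ by $\phi_=(\bar y_{i_1},\bar y_{i_2})$. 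Finally, existentially quantify all the $y_i^j$ (and, if $\phi_R$, $\phi_=$, or $\delta$ contain internal existentials, pull them outside after renaming, which is possible precisely because these formulas are primitive positive).

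The first step is to check that this is a polynomial-time reduction: since $\sigma$ and $\tau$ are finite, the formulas $\delta$, $\phi_=$, and $\phi_R$ have fixed size independent of the input, so $|\psi'|=O(|\psi|)$, and the rewriting is clearly computable in polynomial time. The second step is the equivalence $\Gamma\models\psi\iff\Delta\models\psi'$. For the forward direction I would use the surjectivity of the coordinate map $h:\delta(\Delta^d)\to\Gamma$: given witnesses $b_1,\ldots,b_n\in\Gamma$ for $\psi$, pick preimages $\bar a_i\in\delta(\Delta^d)$ with $h(\bar a_i)=b_i$; the defining equivalences of $I$ then guarantee that each translated atom holds in $\Delta$, and the domain atoms $\delta(\bar a_i)$ hold by construction. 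For the backward direction, given witnesses $\bar a_i\in\Delta^d$ for $\psi'$, the conjuncts $\delta(\bar a_i)$ place each $\bar a_i$ in the domain of $h$; setting $b_i:=h(\bar a_i)$ and again invoking the equivalences defining $I$ (including the one for $\phi_=$ to handle repeated variables in $\psi$), the tuple $(b_1,\ldots,b_n)$ witnesses $\psi$ in $\Gamma$.

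The main point to be careful about is that the output $\psi'$ really is a valid $\Csp(\Delta)$ instance, i.e., a primitive positive $\tau$-sentence. This is the reason the hypothesis that the interpretation be \emph{primitive positive} (and not merely first-order) is essential: substituting conjunctions of atoms and existentially quantified conjunctions of atoms into another such formula, and then prenexing, produces again a primitive positive formula, whereas arbitrary first-order defining formulas would introduce disjunctions or negations and destroy the pp-structure. A secondary subtlety, which the formula $\phi_=$ is there to resolve, is that distinct $d$-tuples in $\Delta$ may be identified by $h$; this is exactly handled by translating equality atoms via $\phi_=$ (and, in the backward direction, by the fact that the equivalences of $I$ are stated relative to $h$, not relative to tuple identity). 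Once these points are in place, the two-way implication is routine and yields the desired polynomial-time reduction.
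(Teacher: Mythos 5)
Your argument is correct and is the standard proof. Note that the paper does not actually prove Theorem~\ref{thm:pp-interpret-hard}; it states it and cites~\cite{JBK} and~\cite{BodirskySurvey} for the proof, so there is no in-paper proof to compare against. Your construction --- unfolding each $\Gamma$-variable into a block of $d$ fresh $\Delta$-variables, adding the domain constraints $\delta(\bar y_i)$, substituting $\phi_R$ for each $R$-atom and $\phi_=$ for each equality atom, and prenexing the internal existential quantifiers --- is exactly the argument that underlies the cited result, and you correctly identify both why the output is primitive positive (this is where the pp-hypothesis on $I$ is used) and why the size bound is linear (the defining formulas are fixed once the signatures are fixed). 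The two directions of the equivalence via surjectivity of $h$ and the defining biconditionals of $I$ are also handled correctly, including the role of $\phi_=$ in accounting for the fact that $h$ need not be injective. One cosmetic remark: the reduction you build is a polynomial-time many-one reduction, which is stronger than what the theorem asks for and is compatible with the paper's usage elsewhere (e.g.\ Lemma~\ref{lem:pp-reduce}), so there is no mismatch.
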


All hardness proofs presented later can be shown
via primitive positive interpretations of Boolean structures (i.e., structures with the domain $\{0,1\}$)
with a hard CSP. In fact, in all such Boolean structures the relation $\NAE$ defined as
$$ \NAE = \{0,1\}^3 \setminus \{(0,0,0),(1,1,1)\}$$
 is primitive positive definable. This fact has not been stated
 in the original publications; however, it deserves to be mentioned
 as a unifying feature of all the classification results presented here.
It is often
more convenient to interpret other Boolean structures than
$(\{0,1\}; \NAE)$, and to then apply the following Lemma.
An operation $f: D^k \rightarrow D$ is called \emph{essentially a permutation} if there exists an $i$ and a bijection $g: DÊ\rightarrow D$ so that $f(x_1,\dots,x_k)=g(x_i)$ for all $(x_1,\dots,x_k) \in D^k$.

\begin{lemma}\label{lem:nae}
Let $\Delta$ be a structure that interprets a Boolean structure
$\Gamma$ such that
all polymorphisms of $\Gamma$ are essentially a permutation. Then the structure $(\{0,1\}; \NAE)$ has a primitive positive interpretation in $\Delta$, and $\Csp(\Delta)$ is NP-hard.
\end{lemma}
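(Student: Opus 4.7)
The plan is to establish the primitive positive interpretation of $(\{0,1\};\NAE)$ in $\Delta$ in two stages: first show that $\NAE$ is primitive positive definable in $\Gamma$ itself, and then lift the pp-definition through the pp-interpretation of $\Gamma$ in $\Delta$. The NP-hardness will then follow from Theorem~\ref{thm:pp-interpret-hard} together with the classical NP-hardness of NAE-3SAT (Schaefer).

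For the first stage, I would verify that every polymorphism of $\Gamma$ preserves $\NAE$. The only permutations of $\{0,1\}$ are the identity and the negation $\neg$, so any polymorphism $f$ of $\Gamma$ that is essentially a permutation is of the form $(x_1,\dots,x_k) \mapsto x_i$ or $(x_1,\dots,x_k) \mapsto \neg x_i$ for some $i$. Projections preserve every relation trivially; for negated projections one simply notes that $\NAE$ is closed under componentwise negation, since $(a,b,c) \notin \{(0,0,0),(1,1,1)\}$ if and only if $(\neg a,\neg b,\neg c) \notin \{(0,0,0),(1,1,1)\}$. Hence $\Pol(\Gamma) \subseteq \Pol(\{0,1\};\NAE)$, and by the classical Pol--Inv Galois correspondence on finite domains (the finite-signature analog of Theorem~\ref{thm:clones-pp}, due to Geiger and to Bodnarchuk--Kaluzhnin--Kotov--Romov), the relation $\NAE$ admits a primitive positive definition $\psi(x,y,z)$ using the relations of $\Gamma$.

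For the second stage, let $I = (d,\delta,(\phi_R)_R,\phi_=,h)$ be the given primitive positive interpretation of $\Gamma$ in $\Delta$. Substitute in $\psi$ each atomic subformula $R(\bar u_1,\dots,\bar u_k)$ by $\phi_R(\bar u_1,\dots,\bar u_k)$, and each equality $u = v$ by $\phi_=(u,v)$; this yields a primitive positive $\tau$-formula $\phi_{\NAE}$ such that $(d,\delta,\phi_{\NAE},\phi_=,h)$ is a primitive positive interpretation of $(\{0,1\};\NAE)$ in $\Delta$. Then Theorem~\ref{thm:pp-interpret-hard} produces a polynomial-time reduction from the NP-complete problem $\Csp(\{0,1\};\NAE)$ to $\Csp(\Delta)$, so $\Csp(\Delta)$ is NP-hard.

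The only delicate point is the invocation of the finite-domain Pol--Inv correspondence, but since $\Gamma$ has the two-element domain $\{0,1\}$ this is exactly the classical Geiger theorem and requires no Ramsey-theoretic input; the remaining steps are formal manipulations of interpretation data, and thus pose no real obstacle.
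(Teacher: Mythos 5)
Your proposal follows essentially the same route as the paper: observe that the essentially-permutational polymorphisms of $\Gamma$ preserve $\NAE$, invoke the finite-domain Pol--Inv correspondence of Geiger and Bodnar\v{c}uk--Kalu\v{z}nin--Kotov--Romov to get a primitive positive definition of $\NAE$ in $\Gamma$, substitute that definition into the primitive positive interpretation of $\Gamma$ in $\Delta$, and close with Theorem~\ref{thm:pp-interpret-hard} and the NP-hardness of \textsc{Not-all-3-equal-3Sat}. The only difference is that you spell out the (easy) verification that projections and negated projections preserve $\NAE$, which the paper leaves implicit; the argument is correct as stated.
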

\begin{proof}
Since the polymorphisms of $\Gamma$ preserve the relation $\NAE$,
and by the well-known finite analog of Theorem~\ref{thm:clones-pp} (due to~\cite{Geiger} and independently, \cite{BoKaKoRo}),
$\NAE$ is primitive positive definable in $\Gamma$.
When $\phi$ is such a primitive positive definition,
by substituting all relations in $\phi$ by their
defining relations in $\Delta$ we obtain an interpretation of
$(\{0,1\}; \NAE)$ in $\Delta$.
Hardness of $\Csp(\Delta)$ follows from the NP-hardness of $\Csp((\{0,1\}; \NAE))$  (this problem is called \textsc{Not-all-3-equal-3Sat} in~\cite{GareyJohnson}) and Theorem~\ref{thm:pp-interpret-hard}.
\end{proof}

Typical Boolean structures $\Gamma$ such that all polymorphisms of
$\Gamma$ are essentially a permutation are the structure $(\{0,1\}; \{ (t_1,t_2,t_3,t_4) \in \{0,1\} \; | \; t_1+t_2+t_3+t_4=2\}$, the structure $(\{0,1\}; \OIT)$, or the structure $(\{0,1\}; \NAE)$ itself. 

Sometimes it is not possible to give a primitive positive interpretation of the structure $(\{0,1\}; \NAE)$ in $\Gamma$,
but it is possible after expanding $\Gamma$ with constants.
Under an assumption about the endomorphism monoid
of $\Gamma$, however, introducing constants does not
change the computational complexity of $\Gamma$.
More precisely, we have the following.

\begin{theorem}[Theorem 19 in~\cite{Cores-Journal}]\label{thm:constants-hard}
Let $\Gamma$ be an $\omega$-categorical structure
with a finite relational signature
such that $\Aut(\Gamma)$ is dense in $\End(\Gamma)$.
Then for any finite number
of elements $c_1,\dots,c_k$ of
$\Gamma$ there is a polynomial-time reduction
from $\Csp((\Gamma, \{c_1,\},\dots,\{c_k\}))$ to $\Csp(\Gamma)$.
\end{theorem}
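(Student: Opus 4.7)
The plan is to first establish the following key lemma, and then to derive the reduction as a routine consequence. Let
\[
O := \{\alpha(c_1,\ldots,c_k) \mid \alpha \in \Aut(\Gamma)\}
\]
be the orbit of the tuple $(c_1,\ldots,c_k)$ under $\Aut(\Gamma)$. The key lemma asserts that $O$ is primitive positive definable in $\Gamma$.

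To prove the lemma, I would apply Theorem~\ref{thm:clones-pp} with $\Delta := \Gamma$, which reduces the task to showing that $O$ is preserved by every polymorphism of $\Gamma$. So fix $f \in \Pol(\Gamma)$ of arity $n$ and tuples $a^1,\ldots,a^n \in O$; for each $i$ choose $\alpha_i \in \Aut(\Gamma)$ with $\alpha_i(c_1,\ldots,c_k) = a^i$, and consider the unary map
\[
g(x) := f(\alpha_1(x),\alpha_2(x),\ldots,\alpha_n(x)) \,.
\]
Then $g \in \End(\Gamma)$, and applying $g$ coordinatewise gives $g(c_1,\ldots,c_k) = f(a^1,\ldots,a^n)$. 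By the density hypothesis, there is $\beta \in \Aut(\Gamma)$ agreeing with $g$ on the finite set $\{c_1,\ldots,c_k\}$, whence $\beta(c_1,\ldots,c_k) = f(a^1,\ldots,a^n)$, showing $f(a^1,\ldots,a^n) \in O$. Since $\Gamma$ is $\omega$-categorical, $O$ coincides with a single $k$-type by Theorem~\ref{thm:RN}, and a primitive positive formula $\psi(y_1,\ldots,y_k)$ defining $O$ in $\Gamma$ can be fixed once and for all, independently of the input to the CSP.

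For the reduction, let an instance $\phi \equiv \exists \bar{x}. \bigwedge_i A_i$ of $\Csp((\Gamma,\{c_1\},\ldots,\{c_k\}))$ be given. Introduce fresh variables $y_1,\ldots,y_k$; delete every atomic conjunct of the form $\{c_j\}(x)$ and everywhere replace such a variable $x$ by $y_j$; finally append the conjunct $\psi(y_1,\ldots,y_k)$. The resulting sentence $\phi'$ is a primitive positive sentence in the signature of $\Gamma$ alone, and it is computable from $\phi$ in polynomial time since $\psi$ has constant size. Any satisfying assignment for $\phi$ extends to one for $\phi'$ by setting $y_j \mapsto c_j$. Conversely, if $s$ satisfies $\phi'$ in $\Gamma$, then $(s(y_1),\ldots,s(y_k)) \in O$, so there is $\alpha \in \Aut(\Gamma)$ with $\alpha(s(y_j)) = c_j$ for all $j$; then $\alpha \circ s$ satisfies $\phi$ in the expanded structure, because automorphisms preserve all relations of $\Gamma$ and $\alpha$ moves the ``constant slots'' to the right values.

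The main obstacle is the key lemma: everything else is bookkeeping. The density hypothesis $\Aut(\Gamma) \subseteq \overline{\Aut(\Gamma)} = \End(\Gamma)$ (in the topology of pointwise convergence) is used precisely at the critical step where the endomorphism $g$, which is built from $f$ and the $\alpha_i$, must be replaced by a genuine automorphism $\beta$ on the finite distinguished set $\{c_1,\ldots,c_k\}$; without density this step breaks down, since a general endomorphism of $\Gamma$ need not send the orbit $O$ into itself.
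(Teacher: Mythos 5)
Your proof is correct and follows what is, to my knowledge, the same strategy as the cited proof in Bodirsky's \emph{Cores of Countably Categorical Structures}: reduce to the key fact that the orbit $O$ of $(c_1,\dots,c_k)$ under $\Aut(\Gamma)$ is primitive positive definable (this is exactly where the density hypothesis enters, via the ``star product'' $g(x)=f(\alpha_1(x),\dots,\alpha_n(x))$ and a density-supplied automorphism $\beta$ matching $g$ on $\{c_1,\dots,c_k\}$), and then splice in a fixed formula $\psi$ defining $O$. One small bookkeeping point you leave implicit: a variable $x$ may occur in several singleton constraints $\{c_j\}(x)$, $\{c_{j'}\}(x)$; you should either preprocess (reject if $c_j\neq c_{j'}$, otherwise pick one index), or observe that replacing $x$ by $y_{j}$ for the least such $j$ and relying on $\psi$ to enforce the equalities among the $y$-coordinates corresponding to equal constants handles it -- but this is routine and does not affect the soundness of the argument.
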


\subsection{Reducts of equality}\label{ssect:equality}
One of the most fundamental classes of $\omega$-categorical
structures is the class of all reducts of $(X;=)$, where $X$ is an arbitrary countably infinite set.
Up to isomorphism,
this is exactly the class of countable structures that are preserved
by all permutations of their domain.
The other two classes of $\omega$-categorical structures that we will study here both contain this class.

We go straight to the statement of the complexity classification in terms of primitive positive interpretations.
This is essentially a reformulation of a result from~\cite{ecsps} which has been formulated without primitive positive interpretations.
It turns out that when $\Gamma$ is preserved
by the operations from one of the minimal clones above the clone generated by all the permutations of $X$, then CSP$(\Gamma)$ can be solved in polynomial time,
and otherwise CSP$(\Gamma)$ is NP-hard.


\begin{theorem}[essentially from~\cite{ecsps}]
Let $\Gamma$ be a reduct of $(X;=)$.
Then exactly one of the following holds.
\begin{itemize}
\item $\Gamma$ has a constant endomorphism. 
In this case, $\Csp(\Gamma)$ is trivially in P.
\item $\Gamma$ has a binary injective polymorphism. 
In this case, $\Csp(\Gamma)$ is in P.
\item All relations with a first-order definition in $(X;=)$ have a primitive positive definition in $\Gamma$. Furthermore,
the structure $(\{0,1\}; \NAE)$
has a primitive positive interpretation in $\Gamma$, and
$\Csp(\Gamma)$ is NP-complete.
\end{itemize}
\end{theorem}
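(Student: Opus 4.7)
My plan is to analyze the polymorphism clone $\Pol(\Gamma)$, which must contain $\cl{\S_X}=\Pol((X;=))$ because $\Gamma$ is a reduct of $(X;=)$; the argument proceeds by splitting on whether this containment is strict.

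First I would handle the case $\Pol(\Gamma)\supsetneq\cl{\S_X}$. Theorem~\ref{thm:clones-pp} together with Zorn's lemma guarantees that $\Pol(\Gamma)$ contains some minimal closed clone above $\cl{\S_X}$, and Theorem~\ref{thm:minimalAboveS} then forces this minimal clone to be generated by $\S_X$ with either a constant operation or a binary injection. Tractability in the first subcase is immediate: a constant endomorphism with value $x_0$ forces every relation of $\Gamma$ to contain the tuple $(x_0,\dots,x_0)$, so the assignment sending every variable to $x_0$ satisfies any instance. For the second subcase I would invoke the polynomial-time algorithm of~\cite{ecsps}, which handles CSPs whose polymorphism clone contains a binary injection.

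Next, assume $\Pol(\Gamma)=\cl{\S_X}$. Every relation first-order definable over $(X;=)$ is invariant under $\S_X$ and hence under $\cl{\S_X}$, so by Theorem~\ref{thm:clones-pp} every such relation is primitive positive definable in $\Gamma$, which yields the first clause of case~3. To obtain NP-hardness I would build a two-dimensional primitive positive interpretation of $(\{0,1\};\NAE)$ in $\Gamma$: take $\delta\equiv\top$, let $\phi_=$ primitive-positively define the four-ary relation $(x_1=x_2)\leftrightarrow(y_1=y_2)$, and let $\phi_{\NAE}$ primitive-positively define the six-ary relation expressing that the three pairs $(x_i,y_i)$ do not all share the same equality-type. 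Both defining relations are first-order definable over $(X;=)$ and hence primitive positive definable in $\Gamma$ by the previous step, and the coordinate map sending $(x,x)$-pairs to $0$ and distinct-element pairs to $1$ is well-defined and surjective onto $\{0,1\}$ since $|X|\geq 2$. Theorem~\ref{thm:pp-interpret-hard} combined with the NP-hardness of \textsc{Not-All-Equal-3SAT} then gives NP-hardness of $\Csp(\Gamma)$, while membership in NP follows from the standard observation that any satisfiable instance has a solution using at most as many values as variables.

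The main obstacle will be the algorithm for case~2; this is the only step not supplied by the general lattice machinery, and it requires the dedicated algorithmic argument of~\cite{ecsps} that genuinely exploits the binary injection. A minor but necessary bookkeeping step is to check disjointness of the trichotomy: neither a constant operation nor a genuinely binary injection is essentially a permutation, so each lies outside $\cl{\S_X}$, meaning cases~1 and~2 both entail $\Pol(\Gamma)\supsetneq\cl{\S_X}$ and are therefore incompatible with case~3's assertion $\Pol(\Gamma)=\cl{\S_X}$.
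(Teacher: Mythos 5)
Your argument is correct and follows the same overall strategy as the paper: split on whether $\Pol(\Gamma)$ properly contains $\cl{\S_X}$, apply Theorem~\ref{thm:minimalAboveS} (via the Zorn's-lemma observation that a proper closed superclone contains a minimal one) to extract a constant or binary injection in the tractable cases, and in the remaining case use Theorem~\ref{thm:clones-pp} to conclude that every relation first-order definable over $(X;=)$ is primitive positive definable in $\Gamma$, then exhibit a primitive positive interpretation of a hard Boolean structure. The only place where you diverge is the hardness construction itself: the paper first pp-defines the specific relation $E_6$, gives a $2$-dimensional primitive positive interpretation of $(\{0,1\};\OIT)$ in $(X;E_6)$, and then invokes Lemma~\ref{lem:nae} to pass from $\OIT$ to $\NAE$; you instead construct the $2$-dimensional interpretation of $(\{0,1\};\NAE)$ directly, with $\phi_=$ and $\phi_{\NAE}$ being the obvious first-order (hence, in this case, pp-definable) $4$-ary and $6$-ary equality-pattern relations. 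Your version is more economical for this isolated theorem; the paper's detour through $\OIT$ and Lemma~\ref{lem:nae} is chosen for uniformity with the analogous hardness arguments for reducts of $(\mathbb{Q};<)$ and of the random graph later in the section. One small remark on the exclusivity claim: you correctly argue that cases~1 and~2 are each incompatible with case~3, which is the content that matters (the paper does no more); cases~1 and~2 are not in fact disjoint from each other (e.g., the structure $(X;=)$ itself has both a constant endomorphism and a binary injective polymorphism), so the ``exactly one'' in the statement should be read as separating the tractable cases collectively from the hard case.
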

\begin{proof}
It has been shown in~\cite{ecsps} that $\Csp(\Gamma)$ is in P
when $\Gamma$ has a constant or a binary injective polymorphism.
Otherwise, by Theorem~\ref{thm:minimalAboveS},
every polymorphism of $\Gamma$ is generated by the permutations of $X$.
Hence, every relation $R$ with a first-order definition
in  $(X;=)$ is preserved by all polymorphisms of $\Gamma$, and it follows from
Theorem~\ref{thm:clones-pp} that every relation is primitive positive definable in $\Gamma$.

This holds in particular for the relation $E_6$ defined as follows.
 \begin{align*} E_6 = \{(x_1,x_2,y_1,y_2,z_1,z_2) \in X^6\; | \; &
 (x_1=x_2 \wedge y_1 \neq y_2 \wedge z_1 \neq z_2) \\
   & \vee  \;
 (x_1 \neq x_2 \wedge y_1 = y_2 \wedge z_1 \neq z_2) \\
 & \vee  \; (x_1 \neq x_2 \wedge y_1 \neq y_2 \wedge z_1 = z_2) \} 
 \end{align*}
 We now show that the structure $(\{0,1\}; \OIT)$ has a primitive positive interpretation in $(X;E_6)$, 
 which by Lemma~\ref{lem:nae} also shows that 
 $(\{0,1\}; \NAE)$ has a primitive positive interpretation in $(X;E_6)$ and that $\Csp(\Gamma)$ is NP-hard.

The dimension of the interpretation is 2, and the domain formula
is \emph{`true'}.
The formula $\phi_{\OIT}(x_1,x_2,y_1,y_2,z_1,z_2)$ is $E_6(x_1,x_2,y_1,y_2,z_1,z_2)$, and
The formula $\phi_=(x_1,x_2,y_1,y_2)$ is
\begin{align*} \exists a_1,a_2,u_1,u_2,u_3,u_4,z_1,z_2. \; & a_1=a_2 \wedge
E_6(a_1,a_2,u_1,u_2,u_3,u_4) \\
\wedge \; & E_6(u_1,u_2,x_1,x_2,z_1,z_2)
\wedge E_6(u_3,u_4,z_1,z_2,y_1,y_2) .
\end{align*}
Note that the primitive positive formula $\phi_=(x_1,x_2,y_1,y_2)$ is
equivalent to $x_1=x_2 \Leftrightarrow y_1=y_2$.
The map $h$ maps
$(a_1,a_2)$ to $1$ if $a_1=a_2$, and to $0$ otherwise.
\end{proof}
Note that both the constant and the binary injective operation
are canonical as functions over $(X;=)$.

\subsection{Reducts of the dense linear order}\label{ssect:temporal}
An extension of the result in the previous subsection has been obtained
in~\cite{tcsps-journal}; there, the complexity of the CSP for all reducts of
$({\mathbb Q};<)$ has been classified. 
By a theorem of Cameron, those reducts are (again up to isomorphism) exactly
the structures that are \emph{highly set-transitive}~\cite{Cameron5}, i.e., structures $\Gamma$ such that for any two finite subsets $A,B$ with $|A|=|B|$ of the domain
there is an automorphism of $\Gamma$ that maps $A$ to $B$.

The corresponding class of CSPs contains many computational problems that have been
studied in Artificial Intelligence, in particular in temporal reasoning~\cite{vanBeek,Nebel,BroxvallJonsson},
but also in scheduling~\cite{and-or-scheduling} or general theoretical computer science~\cite{Opatrny,GalilMegiddo}.
The following theorem is a consequence of results from~\cite{tcsps-journal}. Again, we show that the hardness proofs in this class
are captured by interpreting Boolean structures with few
polymorphisms via primitive positive interpretations with finitely many parameters; this has not appeared in~\cite{tcsps-journal}, so
we provide the proof. The central arguments in the classification follow the reduct classification 
technique based on Ramsey theory that we present in this survey; see Figure~\ref{fig:tcsp} for
an illustration of the bottom of the lattice of reducts of $({\mathbb Q};<)$, and the border of tractability for such reducts.

\begin{figure}
\begin{center}
\includegraphics[scale=0.5]{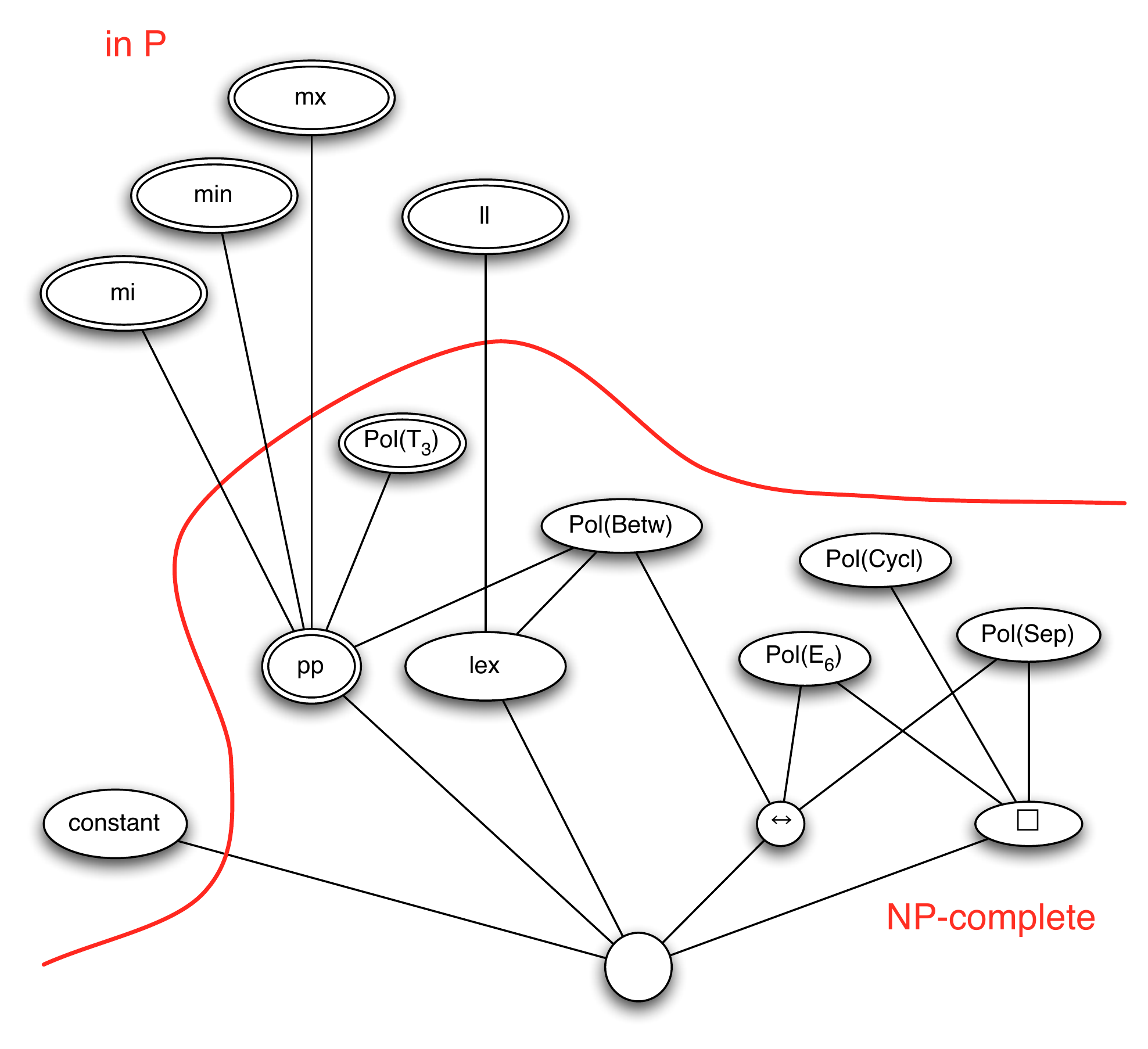}
\caption{An illustration of the classification result for temporal constraint languages. 
Double-circles mean that the corresponding operation has a dual generating a distinct clone which is not drawn in the figure. For the definition of mi, min, mx, and ll, see~\cite{tcsps-journal}.}
\label{fig:tcsp}
\end{center}
\end{figure}

\begin{theorem}[essentially from~\cite{tcsps-journal}]
\label{thm:tcsp}
Let $\Gamma$ be a reduct of
$({\mathbb Q};<)$. Then exactly one of the following holds.
\begin{itemize}
\item $\Gamma$ has one out of 9 binary polymorphisms 
(for a detailed description of those see~\cite{tcsps-journal}), and
$\Csp(\Gamma)$ is in P.
\item $\Aut(\Gamma)$ is dense in $\End(\Gamma)$,
and the structure $(\{0,1\}; \NAE)$
has a primitive positive
interpretation with finitely many parameters in $\Gamma$.
In this case, $\Csp(\Gamma)$ is NP-complete.
\end{itemize}
\end{theorem}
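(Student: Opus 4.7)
The plan follows the two-part blueprint of Section~\ref{sect:climb}. First I would establish the tractability half: for each of the nine distinguished binary operations, one exhibits an explicit polynomial-time algorithm for $\Csp(\Gamma)$ whenever $\Gamma$ is preserved by it. Most of these algorithms are variants of establishing a fixed level of local consistency (arc-, path-, or $k$-consistency, phrased in Datalog) or of a greedy procedure that exploits the semilattice- or projection-like behaviour of the polymorphism; this is a case-by-case verification that amounts to the algorithmic bulk of~\cite{tcsps-journal}, but is conceptually routine once the nine operations are fixed.

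For the hardness half, assume $\Gamma$ has none of the nine binary polymorphisms. The first subgoal is to show that $\Aut(\Gamma)$ is dense in $\End(\Gamma)$. If not, there is $f \in \End(\Gamma) \setminus \cl{\Aut(\Gamma)}$, and Corollary~\ref{cor:finiteMinimalSelfEmbeddings} together with the classification of minimal closed monoids above $\cl{\Aut((\mathbb{Q};<))}$ (obtained exactly as Theorem~\ref{thm:randomMinimalUnary} via Lemma~\ref{lem:generatesCanonicalWithConstants}) forces $f$ to generate one of a short list of canonical unary maps — essentially a constant, $\leftrightarrow$, or $\circlearrowright$. In each of these sub-cases $\Gamma$ either has a constant endomorphism (hence a constant polymorphism) or is preserved by $\leftrightarrow$ or $\circlearrowright$, and combining with projections yields one of the nine tractable binary polymorphisms, contradicting our assumption.

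Next, having density of $\Aut(\Gamma)$ in $\End(\Gamma)$, Theorem~\ref{thm:constants-hard} lets us freely add finitely many constants $c_1,\dots,c_k \in \mathbb{Q}$ without changing the complexity of $\Csp(\Gamma)$. I would apply Proposition~\ref{prop:canonicalMinimalClones} to the expansion $(\Gamma, c_1, \ldots, c_k)$: every minimal closed clone above $\Pol(\Gamma, c_1, \ldots, c_k)$ is generated by a canonical function from $(\mathbb{Q}; <, c_1, \ldots, c_k)^m$ to $(\mathbb{Q}; <)$, and up to the coloring on finitely many $2$-types there are only finitely many such behaviours. Using Theorem~\ref{thm:minimalDLOClones} and a refinement that classifies the canonical binary behaviours of such maps, I would argue that each behaviour either coincides, modulo interpolation by $\Aut((\mathbb{Q};<))$, with one of the nine tractable operations, or else it preserves a Boolean-like relation whose full polymorphism clone consists only of operations that are essentially a permutation. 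In the latter case, Lemma~\ref{lem:nae} delivers a primitive positive interpretation of $(\{0,1\}; \NAE)$ in $(\Gamma, c_1, \ldots, c_k)$, and Theorem~\ref{thm:pp-interpret-hard} gives NP-hardness.

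The main obstacle is the combinatorial case analysis of canonical binary operations over $(\mathbb{Q}; <, c_1, \ldots, c_k)$: although each canonical behaviour is a finite object, one has to match every non-tractable behaviour to a concrete primitive positive formula that, together with the constants $c_i$, defines a Boolean relation with only essentially unary polymorphisms. Structuring this analysis so that each of the six minimal clones of Theorem~\ref{thm:minimalDLOClones} gives either a tractable generator or a direct route to \textsc{Not-all-3-equal-3Sat}, while keeping track of dualities induced by $\leftrightarrow$, is where the bulk of the argument lies; everything else is a fairly mechanical translation through Theorems~\ref{thm:clones-pp},~\ref{thm:pp-interpret-hard} and Lemma~\ref{lem:nae}.
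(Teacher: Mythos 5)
Your outline goes wrong in two places, one a concrete error and one a large missing piece. First, the density of $\Aut(\Gamma)$ in $\End(\Gamma)$: the paper does not re-derive this, but simply quotes from~\cite{tcsps-journal} that the only way density can fail for a reduct of $(\mathbb Q;<)$ is via a constant endomorphism. Your attempted derivation is incorrect: you argue that a non-dense endomorphism monoid would contain a canonical unary map such as $\leftrightarrow$ or $\circlearrowright$, and that either of these, ``combined with projections, yields one of the nine tractable binary polymorphisms.'' That implication is false --- $(\mathbb Q;\Betw)$ is preserved by $\leftrightarrow$ and $(\mathbb Q;\Cycl)$ by $\circlearrowright$, and both have NP-hard CSPs; a permutation never becomes a tractability-certifying polymorphism by composing with projections. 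The actual reason those two unary maps cannot arise here is different: $\leftrightarrow$ is its own inverse, so once it lies in $\End(\Gamma)$ it is already an automorphism of $\Gamma$ and hence in $\cl{\Aut(\Gamma)}$, and a similar (model-completeness) argument handles $\circlearrowright$. So the only genuine possibility is a constant, which is exactly the case~\cite{tcsps-journal} isolates.

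Second and more seriously, your proposal never actually delivers the statement ``$(\{0,1\};\NAE)$ has a primitive positive interpretation with finitely many parameters in $\Gamma$,'' which is precisely the part the paper itself proves (everything else is quoted from~\cite{tcsps-journal}). The paper's route is: (i) invoke the result of~\cite{tcsps-journal} that, failing the nine tractable polymorphisms, one of the six relations $\Betw$, $\Cycl$, $\sep$, $E_6$, $T_3$, $-T_3$ is primitive positive definable in $\Gamma$; and then (ii) exhibit, for each of $(\mathbb Q;E_6)$, $(\mathbb Q;T_3,0)$, $(\mathbb Q;\Betw,0)$, $(\mathbb Q;\sep)$, $(\mathbb Q;\Cycl)$, an explicit primitive positive interpretation of a Boolean structure all of whose polymorphisms are essentially permutations (e.g.\ $\OIT$ or $\NAE$), using Theorem~\ref{thm:constants-hard} to absorb the added constants and Lemma~\ref{lem:nae} to land on $\NAE$. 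The $\Cycl$ case in particular requires a nontrivial three-dimensional interpretation modelled on the Galil--Megiddo hardness proof of cyclic ordering. Your proposal replaces all of this with ``match every non-tractable behaviour to a concrete primitive positive formula,'' never naming the six pivot relations and never producing the interpretations. Appealing to Theorem~\ref{thm:minimalDLOClones} does not close the gap either: that theorem classifies minimal closed clones above $\cl{\Aut((\mathbb Q;<))}$, which sits at the bottom of the clone lattice, whereas a hard reduct $\Gamma$ has $\Pol(\Gamma)$ much higher up, so no direct conclusion about $\Gamma$ follows from that list.
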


Before we derive Theorem~\ref{thm:tcsp} from what
has been shown in~\cite{tcsps-journal}, we would like
to point to Figure~\ref{fig:tcsp} for an illustration
of the clones that correspond to maximal tractable reducts.
The diagram also shows the constraint languages
that just contain one of the important relations $\Betw$ (introduced in the introduction), $\Cycl$, $\sep$ ($\Cycl$ and $\sep$ already appeared in Section~\ref{sect:reducts}), $E_6$ (which appeared earlier in this section), $T_3$, and $-T_3$. Here,
$T_3$ stands for the relation $$\{ (x,y,z) \in \mathbb Q^3\; | \; (x=y<z) \vee (x=z<y) \} \; , $$
and when $R \subseteq {\mathbb Q}^k$, then $-R$ denotes
$\{(-t_1,\dots,-t_k) \; | \; (t_1,\dots,t_k) \in R\}$. 

The importance of those relations comes from the fact (shown in~\cite{tcsps-journal}) that unless 
$\Gamma$ has one out of the 9 binary polymorphisms mentioned
in Theorem~\ref{thm:tcsp} then there is a primitive positive definition
of at least one of the relations $\btw$, $\Cycl$, $\sep$, $E_6$, $T_3$, or $-T_3$.

\begin{proof}[Proof of Theorem~\ref{thm:tcsp}]
It has been shown in~\cite{tcsps-journal} that unless
$\Gamma$ has a constant endomorphism,
$\Aut(\Gamma)$ is dense in $\End(\Gamma)$.
We have already seen that there is a primitive positive interpretation of
$(\{0,1\}; \NAE)$ in structures isomorphic to $({\mathbb Q}; E_6)$.

Now suppose that $T_3$ is primitive positive definable in $\Gamma$. We give below a primitive positive interpretation of the structure
$(\{0,1\}; \OIT)$ in $\Delta = ({\mathbb Q}; T_3,0)$.
Hence, there is also a primitive positive definition of
$(\{0,1\}; \OIT)$ in the expansion of $\Gamma$
by the constant $0$. 
Expansions by constants do not change the computational complexity
of $\Csp(\Gamma)$ since
$\Aut(\Gamma)$ is dense in $\End(\Gamma)$.
Thus, Lemma~\ref{lem:nae} shows NP-hardness of $\Csp(\Gamma)$, and that $(\{0,1\}; \NAE)$ has a primitive positive interpretation 
in $(\Gamma,0)$.

The interpretation of $(\{0,1\}; \OIT)$ in $\Delta$
\begin{itemize}
\item has dimension 2;
\item the domain formula
$\delta(x_1,x_2)$ is  $T_3(0,x_1,x_2)$;
\item the formula $\phi_{\OIT}(x_1,x_2,y_1,y_2,z_1,z_2)$ is
$$ \exists u. \; T_3(u,x_1,y_1) \wedge T_3(0,u,z_1) \; ;$$
\item the formula $\phi_=(x_1,x_2,y_1,y_2)$ is $T_3(0,x_1,y_2)$;
%
\item the coordinate map $h: \delta(\Delta^2) \rightarrow \{0,1\}$
is defined as follows. Let $(b_1,b_2)$ be a pair of elements
of $\Delta$ that satisfies $\delta$.
Then exactly one of $b_1, b_2$ must have value $0$,
and the other element is strictly greater than $0$.
We define $h(b_1,b_2)$ to be $1$ if $b_1=0$,
and to be $0$ otherwise. 
\end{itemize}
To see that this is the intended interpretation, let
$(x_1,x_2),(y_1,y_2),(z_1,z_2) \in \delta(\Delta^2)$, and 
suppose that 
$t:=(h(x_1,x_2),h(y_1,y_2),h(z_1,z_2))=(1,0,0) \in \OIT$.
We have to verify that $(x_1,x_2,y_1,y_2,z_1,z_2)$ satisfies $\phi_{\OIT}$ in $\Delta$.
Since $h(x_1,x_2)=1$, we have $x_1 = 0$, and similarly we get
that $y_1,z_1 > 0$. We can then set
$u$ to $0$ and have $T_3(u,x_1,y_1)$ since $0=u=x_1<y_1$,
and we also have $T_3(0,u,z_1)$ since $0=u<z_1$. 
The case that $t=(0,1,0)$ is analogous. Suppose now that 
$t=(0,0,1) \in \OIT$.
Then $x_1,y_1 > 0$, and $z_1=0$. We can then set $u$ to
$\min(x_1,y_1)$, and therefore have $T_3(u,x_1,y_1)$,
and $T_3(0,u,z_1)$ since $0=z_1<u$.
Conversely, suppose that $(x_1,x_2,y_1,y_2,z_1,z_2)$ satisfies $\phi_{\OIT}$ in $\Delta$. Since $T_3(0,u,z_1)$, exactly one out of 
$u,z_1$ equals $0$. When $u=0$, then because of
$T_3(u,x_1,y_1)$ exactly one out of $x_1,y_1$ equals $0$,
and we get that $(h(x_1,x_2),h(y_1,y_2),h(z_1,z_2)) \in \{(0,1,0),(1,0,0)\} \subseteq \OIT$. When $u>0$, then $x_1>0$ and $y_1>0$,
and so $(h(x_1,x_2),h(y_1,y_2),h(z_1,z_2)) = (0,0,1) \in \OIT$.

An interpretation of $(\{0,1\}; \OIT)$ in $({\mathbb Q}; -T_3,0)$
can be obtained in a dual way.

Next, suppose that $\Betw$ is primitive positive definable in $\Gamma$. We will give a primitive positive interpretation of $(\{0,1\}; \NAE)$ in $({\mathbb Q}; \Betw, 0)$.
Hence, when $\Betw$ has a primitive positive definition in $\Gamma$,
then by Theorem~\ref{thm:constants-hard} (since
$\Aut(\Gamma)$ is dense in $\End(\Gamma)$)
and Lemma~\ref{lem:nae} we obtain NP-hardness of $\Csp(\Gamma)$.

The dimension of the interpretation is one, and the domain formula is $x \neq 0$, which is
clearly equivalent to a primitive positive formula over $({\mathbb Q}; \Betw,0)$. 
The map $h$ maps positive points to $1$, and all other points from ${\mathbb Q}$ to $0$.
The formula $\phi_=(x_1,y_1)$ is
$$ \exists z. \; \Betw(x_1,0,z) \wedge \Betw(z,0,y_1)$$
Note that the primitive positive formula $\phi_=$
 is over $({\mathbb Q}; \Betw, 0)$
 equivalent to $(x_1>0 \Leftrightarrow y_1 > 0)$.
Finally, $\phi_{\NAE}(x_1,y_1,z_1)$
is
$$ \exists u. \; \Betw(x_1,u,y_1) \wedge \Betw(u,0,z_1) \; .$$

If $\sep$ has a primitive positive definition in $\Gamma$,
then the statement follows easily from the previous argument
since $\Betw(x,y,z)$ has a 1-dimensional primitive positive
interpretation in $({\mathbb Q}; \sep)$
(the formula $\phi_\Betw(x,y,z)$ is $\exists u. \sep(u,x,y,z)$).

Finally, if $\Cycl$ is primitive positive definable in $\Gamma$,
we give a 3-dimensional primitive positive interpretation
of the structure $(\{0,1\}; R,\neg)$ where
$R = \{0,1\}^3 \setminus \{(0,0,0)\}$ and $\neg = \{(0,1),(1,0)\}$.
The idea of the interpretation is inspired by the NP-hardness proof of~\cite{GalilMegiddo} for the `Cyclic ordering problem' (see~\cite{GareyJohnson}).


The dimension of our interpretation is three, and the domain formula $\delta(x_1,x_2,x_3)$ is $x_1 \neq x_2 \wedge x_2 \neq x_3 \wedge x_3 \neq x_1$,
which clearly has a primitive positive definition in $({\mathbb Q}; \Cycl)$.
The coordinate map $h$ sends $(x_1,x_2,x_3)$ to $0$ if
$\Cycl(x_1,x_2,x_3)$, and to $1$ otherwise. 

Let $\phi(x_1,x_2,x_3,y_1,y_2,y_3)$ be the formula
\begin{align*}
& \Cycl(x_1,y_1,x_2) \wedge \Cycl(y_1,x_2,y_2) \wedge \Cycl(x_2,y_2,x_3) \\
& \Cycl(y_2,x_3,y_3) \wedge \Cycl(x_3,y_3,x_1) \wedge \Cycl(y_3,x_1,y_1)  \; .
\end{align*}
When $(a_1,\dots,a_6)$ satisfies $\phi$, we can imagine $a_1,\dots,a_6$ as points that appear clockwise in this order on the unit circle.
In particular, we then have that $\Cycl(a_1,a_3,a_5)$ holds if and only if $\Cycl(a_2,a_4,a_6)$ holds. 
The formula $\phi_=(x_1,x_2,x_3,y_1,y_2,y_3)$ is
\begin{align*}
\exists u^1_1,\dots,u^4_3. \; & \phi(x_1,x_2,x_3,u^1_1,u^1_2,u^1_3) \wedge \\
& \bigwedge_{i=1}^3 \phi(u^i_1,u^i_2,u^i_3,u^{i+1}_1,u^{i+1}_2,u^{i+1}_3) \wedge \phi(u^4_1,u^4_2,u^4_3,y_1,y_2,y_3) \, ,
\end{align*}
which is equivalent to $$\delta(x_1,x_2,x_3) \wedge \delta(y_1,y_2,y_3) \wedge (\Cycl(x_1,x_2,x_3) \Leftrightarrow \Cycl(y_1,y_2,y_3)) \, ;$$ this is tedious, but straightforward to verify, and we omit the proof.
\ignore{
To see this, suppose that $(a_1,a_2,a_3,b_1,b_2,b_3) \in {\mathbb Q}^6$ satisfies $\phi_=$, and suppose that $(a_1,a_2,a_3) \in \Cycl$. Then the conjunct $\phi(x_1,x_2,x_3,u^1_1,u^1_2,u^1_3)$ of $\phi_=$ implies that $\Cycl(u^1_1,u^1_2,u^1_3)$. 
The conjunct $\phi(u^i_1,u^i_2,u^i_3,u^{i+1}_1,u^{i+1}_2,u^{i+1}_3)$ implies
that $\Cycl(u^{i+1}_1$,
$u^{i+1}_2,u^{i+1}_3)$, for all $i \leq 3$, and so by the last conjunct 
we finally conclude that $\Cycl(b_1,b_2,b_3)$. The same reasoning shows that
$\phi_=(a_1,a_2,a_3,b_1,b_2,b_3) \wedge \Cycl(b_1,b_2,b_3)$ implies $\Cycl(a_1,a_2,a_3)$. 

Conversely, if
we have $\Cycl(a_1,a_2,a_3)$ and $\Cycl(b_1,b_2,b_3)$ then
we can select points
for $u_1^1,\dots,u_3^4$ that show that 
$\phi_=(a_1,a_2,a_3,b_1,b_2,b_3)$ is satisfiable, as follows.
We only consider orders on $a_1,a_2,a_3,b_1,b_2,b_3$ that start with $a_1$; this is without loss of generality, since $\Cycl$ is preserved by $\circlearrowright$.

\begin{center}
\begin{tabular}{l|l}
\hline
Cyclic order on $(\bar a, \bar b)$ & How to set $u^1_1,\dots,u^4_3$ \\
\hline \hline 
$a_1 < a_2 < a_3 < b_1 < b_2 < b_3$ & $a_1 < u^1_1 < a_2 < u^2_1 < u^1_2 < a_3 < u^3_1 < u^4_1 < b_1 $ \\ 
&  $< u^2_2 < u^3_2 < u^4_2 < b_2 < u^1_3 < \dots < u^4_3 < b_3$ \\ 
\hline
$a_1 < a_2 < b_1 < a_3 < b_2 < b_3$ & $a_1 < u^1_1 < a_2 < u^2_1 < u^3_1 < u^4_1 < b_1 < u^1_2 < a_3$ \\ 
& $< u^2_2 < u^3_2 < u^4_2 < b_2 < u^1_3 < \dots < u^4_3 < b_3$ \\ 
\hline
$a_1 < a_2 < b_1 < b_2 < a_3 < b_3$ & $a_1 < u^1_1 < a_2 < u^2_1 < u^3_1 < u^4_1 < b_1 < u^1_2 < u^2_2$ \\ 
& $< u^3_2 < u^4_2 < b_2 < a_3 < u^1_3 < \dots < u^4_3 < b_3$ \\ 
\hline
$a_1 < a_2 < b_1 < b_2 < b_3 < a_3$ & $a_1 < u^4_1 < u^3_2 < u^2_3 < u^1_1 < a_2 < b_1 < u_2^4 < b_2$ \\
& $< u_3^4 < u^4_3 < u^2_1 < u^1_2 < b_3 < a_3 < u_1^3 < u^2_2 < u^1_3$ \\ 
\hline
$a_1 < b_1 < a_2 < a_3 < b_2 < b_3$ & $a_1 < u^1_1 < \dots < u^4_1 < b_1 < a_2 < u^1_2 < a_3 <$ \\
& $u^2_2 < u^3_2 < u^4_2 < b_2 < u^1_3 < \dots < u^4_3 < b_3$\\ 
\hline
$a_1 < b_1 < a_2 < b_2 < a_3 < b_3$ & $a_1 < u^1_1 < \dots < u^4_1 < b_1 < a_2 < u^1_2 < \dots < u^4_2 <$ \\ 
& $b_2 < a_3 < u^1_3 < \dots < u^4_3 < b_3$ \\ 
\hline
$a_1 < b_1 < a_2 < b_2 < b_3 < a_3$ & $a_1 < u^4_1 < b_1 < u^3_2 < u^2_3 < u^1_1 < a_2 < u^4_2 < b_2 <$ \\
& $u^3_3 < u^4_3 < b_3 < u^2_1 < u^1_2 < a_3 < u^3_1 < u^2_2 < u^1_3$ \\ 
\hline
$a_1 < b_1 < b_2 < a_2 < a_3 < b_3$ & $a_1 < u^4_1 < b_1 < u^3_2 < u^4_2 < b_2 < u^2_3 < u^1_1 < a_2 <$\\ 
& $u^2_1 <  u^1_2 <  a_3 < u^3_3 < u^4_3 < b_3$ \\ 
\hline
$a_1 < b_1 < b_2 < a_2 < b_3 < a_3$ & $a_1 < u^4_1 < b_1 < u^3_2 < u^4_2 < b_2 < u^2_3 < u^1_1 < a_2 < $ \\
& $u^3_3 <  u^4_3 < b_3 < u^2_1 < u^1_2 < a_3 < u^3_1 < u^2_2 < u^1_3 $ \\ 
\hline
$a_1 < b_1 < b_2 < b_3 < a_2 < a_3$ & $a_1 < u^3_1 < u^4_1 < b_1 < u^2_2 < u^3_2 < u^4_2 < b_2 < b_3 < $ \\
& $u^1_3 < \dots < u^4_3 < a_2 < u^1_1 < a_3 < u^2_1 < u^1_2$ 
\end{tabular}
\end{center}
}

The formula $\phi_\neg(x_1,x_2,x_3,y_1,y_2,y_3)$
is $\phi_=(x_1,x_2,x_3,z_1,z_3,z_2)$.

The formula
$\phi_{R}(x_1,x_2,x_3,y_1,y_2,y_3,z_1,z_2,z_3)$ is
\begin{align*}
\exists a,b,c,d,e,f,g,h,i,j,k,l,m,n. \; & \Cycl(a,c,j) \wedge \Cycl(b,j,k) \wedge \Cycl(c,k,l) \\
\wedge \; & \Cycl(d,f,j) \wedge \Cycl(e,j,l) \wedge \Cycl(f,l,m) \\
\wedge \; & \Cycl(g,i,k) \wedge \Cycl(h,k,m) \wedge \Cycl(i,m,n) \\\wedge  \; & \Cycl(n,m,l)
\wedge \phi_=(x_1,x_2,x_3,a,b,c) \\
\wedge  \; & \phi_=(y_1,y_2,y_3,d,e,f) \wedge \phi_=(z_1,z_2,z_3,g,h,i)
\end{align*}
The proof that for all tuples $\bar a_1, \bar a_2, \bar a_3 \in
{\mathbb Q}^3$
\begin{align*}
(h(\overline a_1), h(\overline a_3),  h(\overline a_3)) \in R \;
& \Leftrightarrow \;
({\mathbb Q}; \Cycl) \models \phi_{R}(\overline a_1, \overline a_2, \overline a_3)
\end{align*}
follows directly the correctness proof of the reduction presented
in~\cite{GalilMegiddo}. 
\ignore{
First, suppose that 
$\bar a_1, \bar a_2, \bar a_3 \in {\mathbb Q}^3$ are such that
$(h(\bar a_1), h(\bar a_2), h(\bar a_3)) \nin R$. 
Hence, $h(\bar a_i)=0$ and $\Cycl(\bar a_i[1],\bar a_i[2],\bar a_i[3])$ for all $i \leq 3$. 
Suppose for contradiction 
that there were values for the variables $a,b,c,d,e,f,g,h,i,j,k,l,m,n$ in 
$\phi_R$ such that all the conjuncts of $\phi_R$ are true. 
Then the conjuncts of $\phi_R$ involving the formula $\phi_=$ imply that
$\Cycl(a,b,c)$, $\Cycl(d,e,f)$, and $\Cycl(g,h,i)$. 
Then 
\begin{align*} 
&\Cycl(a,b,c) \xrightarrow{\Cycl(a,c,j)} \Cycl(b,c,j) \xrightarrow{\Cycl(b,j,k)} \Cycl(c,j,k) \xrightarrow{\Cycl(c,k,l)} \Cycl(j,k,l) \\
& \Cycl(d,e,f) \xrightarrow{\Cycl(d,f,j)} \Cycl(e,f,j) \xrightarrow{\Cycl(e,j,l)} \Cycl(f,j,l) \xrightarrow{\Cycl(f,l,m)} \Cycl(j,l,m) \\
& \Cycl(g,h,i) \xrightarrow{\Cycl(g,i,k)} \Cycl(h,i,k) \xrightarrow{\Cycl(h,k,m)} \Cycl(i,k,m) \xrightarrow{\Cycl(i,m,n)} \Cycl(k,m,n) \\
& \Cycl(j,k,l) \xrightarrow{\Cycl(j,l,m)} \Cycl(k,l,m) \xrightarrow{\Cycl(k,m,n)} \Cycl(l,m,n) \, .
\end{align*}
But $\Cycl(l,m,n)$ is in contradiction to the conjunct $\Cycl(n,m,l)$.

Conversely, suppose that
$\bar a_1, \bar a_2, \bar a_3 \in {\mathbb Q}^3$ are such that
$(h(\bar a_1), h(\bar a_2), h(\bar a_3)) \in R$. Hence, there must be an $i \leq 3$ such that 
$h(\bar a_i) = 1$, and thus $\Cycl(\bar a_i[1],\bar a_i[2],\bar a_i[3])$. We have to
find values for the variables $a,b,c,d,e,f,g,h,i,j,k,l,m,n$ of $\phi_R$ so that all conjuncts
of $\phi_R$ are satisfied when evaluated at $\bar a_1, \bar a_2, \bar a_3$. For this, it suffices to specify a linear order on those variables that satisfies all the conjuncts in
$\phi_R$ so that $\Cycl(\bar a_1[1],\bar a_1[2],\bar a_1[3])$ if and only if $\Cycl(a,b,c)$,
$\Cycl(\bar a_2[1],\bar a_2[2],\bar a_2[3])$ if and only if $\Cycl(d,e,f)$, and
$\Cycl(\bar a_3[1],\bar a_3[2],\bar a_3[3])$ if and only if $\Cycl(g,h,i)$.
Those linear orders can be found in the following table; we use the notation
from Section~\ref{sect:reducts}. 
\begin{center}
\begin{tabular}{|l|l|l|}
\hline
$(h(\bar a_1),h(\bar a_2),h(\bar a_3))$ & $\Cycl$ contains & Order on variables \\
\hline \hline
$(1,0,0)$ & $(a,c,b),(d,e,f),(g,h,i)$ & $\overrightarrow{ackmbdefjlnghi}$ \\
$(0,1,0)$ & $(a,b,c),(d,e,f),(g,i,h)$ 
& $\overrightarrow{abcjkdmflneghi}$ \\
$(0,0,1)$ & $(a,b,c),(d,e,f),(g,i,h)$ & $\overrightarrow{abcdefjklngimh}$ \\
$(1,1,0)$ & $(a,c,b),(d,f,e),(g,h,i)$ & $\overrightarrow{ackmbdfejlnghi}$ \\
$(1,0,1)$ & $(a,c,b),(d,e,f),(g,i,h)$ 
& $\overrightarrow{ackmbdefjlngih}$ \\
$(0,1,1)$ & $(a,b,c),(d,f,e),(g,i,h)$ & $\overrightarrow{abcjkdmflnegih}$ \\
$(1,1,1)$ & $(a,c,b),(d,e,f),(g,i,h)$ & $\overrightarrow{acbjkdmflnegih}$ \\
\hline
\end{tabular}
\end{center}
}
\end{proof}

\subsection{Reducts of the random graph}\label{ssect:schaefer}
The full power of the technique that is developed in this paper
can be used to obtain a full complexity classification for
all reducts of the random graph $G=(V;E)$~\cite{BodPin-Schaefer}. Again, the result can be stated in terms of primitive positive
interpretations -- this is not obvious from the statement of the result
in~\cite{BodPin-Schaefer}, therefore we provide the proofs.

\begin{theorem}[essentially from~\cite{BodPin-Schaefer}]\label{thm:schaefer}
Let $\Gamma$ be a reduct of
the countably infinite random graph $G$.
Then exactly one of the following holds.
\begin{itemize}
\item $\Gamma$ has one out of 17 at most ternary canonical
polymorphisms (for a detailed description of those see~\cite{BodPin-Schaefer}), and $\Csp(\Gamma)$ is in P.
\item $\Gamma$ admits a primitive positive interpretation of $(\{0,1\}; \OIT)$. In this case, $\Csp(\Gamma)$ is NP-complete.
\end{itemize}
\end{theorem}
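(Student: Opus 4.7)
The plan is to proceed in two main stages: first reduce to the case where $\Aut(\Gamma)$ is dense in $\End(\Gamma)$ using the classification of minimal monoids (Theorem~\ref{thm:randomMinimalMonoids}), and then analyze the polymorphism clone $\Pol(\Gamma)$ by climbing the lattice of closed clones above $\cl{\Aut(G)}$ via canonical functions. For the endomorphism reduction: if $\End(\Gamma)$ contains a constant, $\Csp(\Gamma)$ is trivial; if it contains $e_E$ or $e_N$, then (reasoning as in the proof of Corollary~\ref{cor:mc}) $\Gamma$ is homomorphism-equivalent to a reduct of $(V;=)$, and the equality classification from Section~\ref{ssect:equality} gives either tractability via a constant or binary injective operation, or NP-hardness via $\OIT$-interpretation. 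In the remaining case $\Aut(\Gamma)$ is dense in $\End(\Gamma)$, and Theorem~\ref{thm:constants-hard} lets us add any finite set of constants for free.

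Working now with $(\Gamma,c_1,\ldots,c_k)$, the key step is to exploit the canonization machinery: by Proposition~\ref{prop:canonicalMinimalClones} combined with Proposition~\ref{lem:generatesCanonicalWithConstants}, any polymorphism violating a relation $R$ generates with $\Aut(G)$ a canonical function from $(G,c_1,\ldots,c_k)^m$ to $G$ that still violates $R$. Since there are only finitely many complete behaviors of such functions for bounded $m$ and $k$, the classification reduces to a finite combinatorial problem. Starting from the list of binary canonical operations in Theorem~\ref{thm:minimalRandomClones} and the unary ones in Theorem~\ref{thm:randomMinimalUnary}, I would iterate the climbing procedure: for each candidate clone, either identify a tractable polymorphism (giving rise to a polynomial-time algorithm for $\Csp(\Gamma)$), or show that the clone is too small and admits a pp-interpretation of $\OIT$.

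Tractability for the 17 candidate polymorphisms would be established case by case. For operations of type $\max$ or $p_1$ that are balanced or $E$-dominated, one adapts the finite-domain algorithms (e.g., a suitably modified arc consistency or Gaussian-style elimination) to the random graph by canonicity: the behavior of the operation on $n(\Gamma)$-types determines everything, so algorithmic properties from the finite quotient transfer. For ternary operations (majority-like or minority-like), one uses the classical connection between such polymorphisms and bounded-width algorithms, respectively affine-type algorithms.

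For NP-hardness, in each remaining case I would show that $\Pol(\Gamma,c_1,\ldots,c_k)$ is so restricted that a relation of the form $E_6$ (from Section~\ref{ssect:equality}) or a related six-ary ``non-equality pattern'' becomes pp-definable, which pp-interprets $(\{0,1\};\OIT)$ exactly as in the equality case; hardness then follows from Lemma~\ref{lem:nae} and Theorem~\ref{thm:constants-hard}. The main obstacle will be the combinatorial bookkeeping in the climbing procedure: one must check that composing canonical functions either produces an already-listed tractable operation, or collapses the clone back to one of the ``essentially a permutation'' cases where $\OIT$ is pp-interpretable. This is where the Ramsey-theoretic canonization does the crucial work, because it ensures that this tree of cases is finite and that the behaviors at each node are fully determined by finite data, making the exhaustive case analysis feasible.
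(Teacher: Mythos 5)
Your proposal sets out to re-derive the full classification from \cite{BodPin-Schaefer} by climbing the clone lattice from scratch, whereas the paper's proof of Theorem~\ref{thm:schaefer} treats that classification as a black box. Specifically, the paper quotes from \cite{BodPin-Schaefer} the precise dichotomy: either $\Gamma$ has one of 17 at-most-ternary canonical polymorphisms and is tractable, or one of the four concrete relations $E_6$, $T$, $H$, $P^{(3)}$ is primitive positive definable in $\Gamma$. The content this paper adds — and hence the content a proof of this theorem must supply — is the translation of the second alternative into the language of primitive positive interpretations: for each of $T$, $H$, $P^{(3)}$ one must exhibit an explicit pp-interpretation of a Boolean structure whose polymorphisms are essentially permutations (then invoke Lemma~\ref{lem:nae} and Theorem~\ref{thm:pp-interpret-hard}). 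Your proposal never identifies the hard relations $T$, $H$, $P^{(3)}$, never exhibits the domain formulas, $\phi_=$, $\phi_R$, or coordinate maps of these interpretations, and falls back on the vague statement that ``a relation of the form $E_6$ or a related six-ary pattern becomes pp-definable.'' That is precisely the step that is neither automatic nor routine; for instance, in the case of $T$ the paper has to pass through a derived relation $L$ and a three-dimensional interpretation of the Boolean relation $\{(t_1,t_2,t_3,t_4):t_1+t_2+t_3+t_4=2\}$, for $H$ a two-dimensional interpretation of $\OIT$, and for $P^{(3)}$ a two-dimensional interpretation of $\NAE$ using the auxiliary relation $Q^{(4)}$. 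Omitting these is a genuine gap: the NP-hardness side of the theorem as stated — pp-interpretability of $(\{0,1\};\OIT)$, not mere NP-hardness by some other route — is exactly what is being proved.

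On the tractability side your sketch is also too thin: asserting that one can ``adapt finite-domain algorithms by canonicity'' is not a proof that each of the 17 canonical polymorphisms yields a polynomial-time CSP, and in fact the paper does not reprove this either — it cites \cite{BodPin-Schaefer}. There is also a subtle unresolved point in your opening reduction: if $\End(\Gamma)$ contains $e_E$ or $e_N$, passing to a homomorphism-equivalent reduct of $(V;=)$ and applying the equality classification gives hardness of $\Csp(\Gamma)$, but it does not by itself show pp-interpretability of $\OIT$ in $\Gamma$ (only in the equality reduct). One needs to argue, as the paper implicitly does by routing everything through the single relation $E_6$ which is pp-definable in $\Gamma$ directly, that the interpretation survives the transfer; homomorphism equivalence alone does not preserve pp-interpretability in general. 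So the overall architecture you describe is recognizable as the strategy underlying \cite{BodPin-Schaefer}, but as a proof of the theorem stated here it is missing precisely the concrete interpretations that constitute the paper's argument.
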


\begin{proof}
It has been shown in~\cite{BodPin-Schaefer} that
$\Gamma$ has one out of 17 at most ternary canonical polymorphisms, and $\Csp(\Gamma)$ is in P,
or one of the following relations has a
primitive positive definition in $\Gamma$:
the relation $E_6$, or the relation $T$, $H$, or $P^{(3)}$,
which are defined as follows.
The $4$-ary relation $T$ holds on $x_1,x_2,x_3,x_4 \in V$
if $x_1,x_2,x_3,x_4$ are pairwise distinct, and induce in $G$
either
\begin{itemize}
\item a single edge and two isolated vertices,
\item a path with two edges and an isolated vertex,
\item a path with three edges, or
\item a complement of one of the structures stated above.
\end{itemize}
To define the relation $H$, we write $N(u,v)$ as a shortcut for
$E(u,v) \wedge u \neq v$. Then
$H(x_1,y_1,x_2,y_2,x_3,y_3)$ holds on $V$ if
\begin{align*}
& \bigwedge_{i,j \in \{1,2,3\}, i \neq j, u \in \{x_i,y_i\},v \in \{x_j,y_j\}} N(u,v) \\
\wedge & \; \big(((E(x_1,y_1) \wedge N(x_2,y_2) \wedge N(x_3,y_3))
\\
& \vee \; (N(x_1,y_1) \wedge E(x_2,y_2) \wedge N(x_3,y_3))   \label{eq:rel} \\
& \vee \; (N(x_1,y_1) \wedge N(x_2,y_2) \wedge E(x_3,y_3)) \big)\; .
\end{align*}
The ternary relation $P^{(3)}$ holds on $x_1,x_2,x_3$ if those
three vertices are pairwise distinct and do not induce a clique or an independent set in $G$.

Suppose first that $T$ is primitive positive definable in $\Gamma$.
Let $R$ be the relation
$\{ (t_1,t_2,t_3,t_4) \in \{0,1\} \; | \; t_1+t_2+t_3+t_4=2\}$.
We have already mentioned that all polymorphisms of 
$(\{0,1\}; R)$ are essentially permutations.
To show that $(\{0,1\}; \NAE)$ has a primitive positive
interpretation in $\Gamma$, we can therefore use
Lemma~\ref{lem:nae} and it suffices to show that
 there is a primitive positive interpretation of the structure
$(\{0,1\}; R)$ in $(V;T)$.
For a finite subset $S$ of $V$, write $\# S$ for the parity of edges
 between members of $S$. Now we define the relation $L \subseteq V^6$ as follows.
\begin{align*}
L := \big \{ x \in V^6 \; | \; & \text{the entries of $x$ are pairwise distinct, and }\\
&\#\{x_1,x_2,x_3\}=\#\{x_4,x_5,x_6\} \big \} 
\end{align*}
It has been shown in~\cite{BodPin-Schaefer}
that the relation $L$ is pp-definable in $(V;T)$.
We therefore freely use the relation
$L$ (and similarly $\neq$, the disequality relation) in primitive positive formulas over $(V;T)$.

Our primitive positive interpretation of
$(\{0,1\}; R)$ has dimension three.
The domain formula $\delta(x_1,x_2,x_3)$ is
$x_1 \neq x_2 \; \wedge \; x_1 \neq x_3 \; \wedge \; x_2 \neq x_3$.
The formula $\phi_R(x^1_1,x^1_2,x^1_3,\dots,x^4_1,x^4_2,x^4_3)$ of the interpretation is
\begin{align*}
\exists y_1,y_2,y_3,y_4. \; & T(y_1,\dots,y_4) \\
\wedge & L(x^1_1,x^1_2,x^1_3,y_2,y_3,y_4) \\
\wedge & L(x^2_1,x^2_2,x^2_3,y_1,y_3,y_4) \\
\wedge & L(x^3_1,x^3_2,x^3_3,y_1,y_2,y_4) \\
\wedge & L(x^4_1,x^4_2,x^4_3,y_1,y_2,y_3)
\end{align*}
The formula $\phi_=$ is $L(x_1,x_2,x_3,y_1,y_2,y_3)$.
Finally, the coordinate map sends a tuple $(a_1,a_2,a_3)$
for pairwise distinct $a_1,a_2,a_3$
to $1$ if
$P^{(3)}(a_1,a_2,a_3)$, and to $0$ otherwise.

\ignore{
Observe that when $(a^1_1,a^1_2,a^1_3)$ and
 $(a^2_1,a^2_2,a^2_3)$ are tuples of pairwise distinct elements,
 then
$h(a^1_1,a^1_2,a^1_3) = h(a^2_1,a^2_2,a^2_3)$
if and only if $L(a^1_1,a^1_2,a^1_3,a^2_1,a^2_2,a^2_3)$,
by the definition of $L$.
We have to show that
$(h(a^1_1,a^1_2,a^1_3),\dots,h(a^4_1,a^4_2,a^4_3)) \in R$ if and only
if $\phi_R(a^1_1,a^1_2,a^1_3,\dots,a^4_1,a^4_2,a^4_3)$,
for tuples $(a^i_1,a^i_2,a^i_3)$, $1 \leq i \leq 4$, with pairwise distinct entries. This follows from the
observation that for any tuple $t \subseteq V^4$ that satisfies
$T(x_1,x_2,x_3,x_4)$,
 there are exactly two 3-element subsets $\{a,b,c\}$ of
$\{x_1,x_2,x_3,x_4\}$ such that $t$ satisfies the formula $R^{(3)}(a,b,c)$. So, by the four constraints of $\phi_R$ involving the relation $L$,
$\phi_R(a^1_1,a^1_2,a^1_3,\dots,a^4_1,a^4_2,a^4_3)$ holds if
and only if there are exactly two out of the four triples
$(a^1_1,a^1_2,a^1_3)$, $(a^2_1,a^2_2,a^2_3)$, $(a^3_1,a^3_2,a^3_3)$, $(a^4_1,a^4_2,a^4_3)$ that lie in $R^{(3)}$,
which is the case if and only if
$(h(a^1_1,a^1_2,a^1_3),\dots,h(a^4_1,a^4_2,a^4_3) \in R$.
}

Next, suppose that $H$ is primitive positive definable in $\Gamma$.
We give a 2-dimensional interpretation of $(\{0,1\}; \OIT)$
in $\Gamma$. The domain formula is \emph{`true'}. The formula
$\phi_=(x_1,x_2,y_1,y_2)$ is
\begin{align*}
\exists z_1,z_2,u_1,u_2,v_1,v_2. \; & H(x_1,x_2,u_1,u_2,z_1,z_2) \wedge N(u_1,u_2) \\
\wedge \; & H(z_1,z_2,v_1,v_2,y_1,y_2) \wedge N(v_1,v_2) \, .
\end{align*}
This formula is equivalent to a primitive positive formula
over $\Gamma$ since $N(x,y)$ is primitive positive definable
by $H$. The formula $\phi_{\OIT}(x_1,x_2,y_1,y_2,z_1,z_2)$
is
\begin{align*}
\exists x_1',x_2',y_1',y_2',& z_1',z_2'. \,
H(x_1',x_2',y_1',y_2',z_1',z_2') \\
\wedge \; & \phi_=(x_1,x_2,x_1',x_2')
\wedge \phi_=(x_1,x_2,x_1',x_2')
\wedge \phi_=(x_1,x_2,x_1',x_2') \, .
\end{align*}
The coordinate map sends a tuple $(x_1,x_2)$ to $1$ if $E(x_1,x_2)$
and to $0$ otherwise.

Finally, suppose that $P^{(3)}$ has a primitive positive definition
in $\Gamma$. We give a 2-dimensional primitive positive
interpretation of $(\{0,1\}; \NAE)$.
For $k\geq 3$, let $Q^{(k)}$ be the $k$-ary relation
that holds for a tuple $(x_1,\dots,x_k) \in V^k$
iff $x_1,\dots,x_k$ are pairwise distinct, and $(x_1,\dots,x_k)\nin P^{(k)}$. It has been shown in~\cite{BodPin-Schaefer} that the relation $Q^{(4)}$
is primitive positive definable by the relation $P^{(3)}$.
Now, the formula $\phi_=(x_1,x_2,y_1,y_2)$ is
$\exists z_1,z_2. Q^{(4)}(x_1,x_2,z_1,z_2) \wedge Q^{(4)}(z_1,z_2,y_1,y_2)$. The formula $\phi_{\NAE}(x_1,x_2,y_1,y_2,z_1,z_2)$ is
\begin{align*}
\exists u,v,w. \; & P^{(3)}(u,v,w) \wedge Q^{(4)}(x_1,x_2,u,v) \\
\wedgeÊ\; & Q^{(4)}(y_1,y_2,v,w) \wedge Q^{(4)}(z_1,z_2,w,u) \, .
\end{align*}
The coordinate map sends a tuple $(x_1,x_2)$ to $1$ if $E(x_1,x_2)$
and to $0$ otherwise.
\end{proof}

\section{Concluding Remarks and Further Directions}
We have outlined an approach to use Ramsey theory for the classification of reducts of a structure,
considered up to existential positive, or primitive positive interdefinability.
The central idea in this approach is to study functions that preserve the reduct, and to apply
structural Ramsey theory to show that those functions must act regularly on large parts of the domain.
This insight makes those functions accessible to combinatoral arguments and classification.

Our approach has been illustrated for the reducts of $(\mathbb Q; <)$, and the reducts of the random graph $(V;E)$.
One application of the results is complexity classification of constraint satisfaction problems
in theoretical computer science.
Interestingly, the hardness proofs in those classifications all follow a common pattern:
they are based on primitive positive interpretations.
In particular, we proved complete complexity classifications without the typical
 computer science hardness proofs -- rather, the hardness results follow from mathematical statements about primitive positive interpretability in $\omega$-categorical structures.

There are many other natural and important $\omega$-categorical structures besides $(\mathbb Q; <)$
and $(V;E)$ where this approach seems promising.
We have listed some of the simplest and most basic examples in Figure~\ref{fig:table}.
In this table, the first column specifies the `base structure' $\Delta$, and we will be interested in the
class of all structures definable in $\Delta$. The second column lists what is known about this class, considered up to first-order interdefinability. The third column describes the corresponding Ramsey result, when $\Delta$ is equipped with an appropriate linear order. The fourth column gives the status with respect to complexity classification of the corresponding class of CSPs. The fifth class indicates in which areas in computer science those CSPs find applications.

\begin{figure}[h]
\begin{center}
\begin{tabular}{|p{2.4cm}|p{2.5cm}|p{1.9cm}|p{1.6cm}|p{2cm}|}
\hline
Reducts of & First-order Reducts & Ramsey Class & CSP Dichotomy & Application, Motivation \\
 \hline \hline
$(X; =)$ & Trivial & Ramsey's theorem & Yes & Equality Constraints \\
\hline
$(\mathbb Q; <)$ & Cameron~\cite{Cameron} & Ramsey's theorem & Yes & Temporal Reasoning \\
\hline
$(V; E)$ & Thomas~\cite{RandomReducts} & Ne\v{s}et\v{r}il + R\"odl~\cite{NesetrilRoedl} & Yes & Schaefer's theorem for graphs \\
 \hline
 Homogeneous universal poset & ? & Ne\v{s}et\v{r}il + R\"odl~\cite{PosetRamsey} & ? & Temporal Reasoning \\
\hline
Homogeneous C-relation & Adeleke, Macpherson, Neumann~\cite{AdelekeMacPherson,AdelekeNeumann} & Deuber, Miliken & ? & Phylogeny Reconstruction \\
 \hline
 Countable atomless Boolean algebra & ? & Graham, Leeb, Rothschild (see~\cite{Topo-Dynamics}) & ? & Set Constraints \\
  \hline
 Allen's Interval Algebra  & ? & This paper, Section~\ref{sect:interpret} & ? & Temporal Reasoning \\
 \hline
\end{tabular}
\end{center}
\caption{A diagram suggesting future research.}
\label{fig:table}
\end{figure}


\end{document}